\documentclass[review]{elsarticle}
\usepackage{hyperref}
\hypersetup{
	colorlinks=true,
	linkcolor=black,
	citecolor=black,
	urlcolor=blue
}

\usepackage{color}
\usepackage{graphicx}
\usepackage{subcaption}
\usepackage[normalem]{ulem}
\let\isout\sout \renewcommand{\sout}[1]{\ifmmode\text{\isout{\ensuremath{{\color{blue}#1}}}}\else\isout{{\color{blue}#1}}\fi}
\usepackage{hyperref}
\usepackage{multirow}
\usepackage{amsmath,amstext,amssymb}
\usepackage{amsthm}
\usepackage{tabularx,longtable,multirow,diagbox}
\usepackage{enumerate}
\usepackage{mathrsfs}

\usepackage{caption}
\usepackage{booktabs}
\usepackage{geometry}
\usepackage{algorithm}
\usepackage{algorithmicx}
\usepackage{algpseudocode}

\numberwithin{equation}{section}

\theoremstyle{plain}
\newtheorem{Theorem}{Theorem}[section]

\newtheorem{Definition}{Definition}[section]
\newtheorem{Lemma}[Theorem]{Lemma}

\theoremstyle{remark}
\newtheorem{Remark}[Theorem]{Remark}

\begin{document}
	\begin{frontmatter}
		
		\title{High-Order Exponential Integrators with Improved Uniform Accuracy for Charged-Particle in a Perpendicular Strong Magnetic Field}
		\author[nju]{Zhihao Qi\fnref{myfootnote}}
		\ead{zhihaoqi@smail.nju.edu.cn}
		
		\author[nju]{Weibing Deng\corref{mycorrespondingauthor}\fnref{myfootnote1}}
		\ead{wbdeng@nju.edu.cn}
		\cortext[mycorrespondingauthor]{Corresponding author}
		\fntext[myfootnote1]{The work of this author was
			supported by the National Key R\&D Program of China (2024YFA1012600), and by the NSF of China grant 12171237.}
		\address[nju]{School of Mathematics,
			Nanjing University, Nanjing 210093, People's Republic of China}
		
		\begin{abstract}
			This paper considers a class of charged-particle dynamics problems in which the particle is subjected to a magnetic force, with a magnetic flux density inversely proportional to a small parameter $0<\varepsilon\ll 1$, and a nonlinear electric force. The resulting highly oscillatory behavior poses significant challenges for numerical computation. To enhance the performance of exponential integrators (EIs), this paper employs a technique that linearizes the ordinary differential equation through a dimension-raising approach.  Based on this technique, a new family of EIs is developed that achieves arbitrarily high order. For short-time simulations on the interval $[0,T]$, it is rigorously proved that the proposed method--which employs auxiliary polynomials of degree $k$ and a time step $\Delta t$--satisfies two distinct error bounds: $O(\varepsilon \Delta t^{k+1})$ and $O(\varepsilon^{k+2})$. The latter bound guarantees that the algorithm stays accurate even when the step size is of order $O(1)$. Furthermore, when a large step size $\varepsilon^{-1}\Delta t$ is used to simulate the long-term dynamics over $[0,\varepsilon^{-1}T]$, the numerical scheme attains a uniform convergence rate of $O(\Delta t^{k+1})$. Several numerical experiments confirm these theoretical results.
		\end{abstract}
		
		\begin{keyword}
			Charged-particle dynamics \sep Strong magnetic field \sep Local linearization \sep Uniform accuracy \sep Exponential integrators.
			
			\MSC[2021] 65L05 \sep 65L20 \sep 65L70 \sep 78A35 \sep
		\end{keyword}
	\end{frontmatter}
	

\section{Introduction}\label{Sec1}
High-precision simulation of charged-particle motion in electromagnetic fields has important applications across many fields, including plasma physics, nuclear physics, inertial confinement fusion, and aerospace engineering. Charged-particle dynamics (CPD) constitutes a fundamental issue in numerous physical problems involving field–particle interactions and has attracted extensive and sustained attention over the past few decades \cite{C2015,AKNI2006}. For example, in the kinetic modeling of collisionless plasmas, the Vlasov equation is often used together with the electromagnetic field equations to describe the collective behavior of charged particles through a distribution function in phase space \cite{T2004,FE1998,FXS2018}. When this function is discretized using particle shape functions within the Particle-in-Cell (PIC) framework \cite{XQ2018,L2014,KKMS2017,WWYZ2026}, its time evolution can be interpreted as the motion of a large number of charged macroparticles in an electromagnetic field. In practical computations, it is necessary to advance the position of each particle by solving the CPD equation.

This paper investigates the two-dimensional dynamics of a charged particle subject to a uniform, perpendicular, and strong magnetic field. The governing equations are given by
\begin{equation}\label{CPD2D-homo}
  \begin{aligned}
      &\ddot{\mathbf{y}}=\frac{1}{\varepsilon}J\dot{\mathbf{y}}+\mathbf{E}(\mathbf{y}),\quad t\in \left[0,\frac{T}{\varepsilon^\nu}\right], \\
      &\mathbf{y}(0)=\mathbf{y}_0,\quad \dot{\mathbf{y}}(0)=\dot{\mathbf{y}}_0,
  \end{aligned}
\end{equation}
where $\mathbf{y}(t)\in\mathbb{R}^2$ denotes the position vector of the charged particle, and $\mathbf{y}_0,\dot{\mathbf{y}}_0\in\mathbb{R}^2$ are the initial position and velocity, respectively. The dimensionless parameter $\varepsilon$ characterizes the strength of the magnetic field. In the regime $0<\varepsilon\ll 1$, the cyclotron frequency becomes very large, rendering (\ref{CPD2D-homo}) a prototypical highly oscillatory differential equation \cite{HLW2013, WYW2013}. The function $\mathbf{E}:\mathbb{R}^2\to\mathbb{R}^2$ represents a prescribed nonlinear electric field, while $J$ denotes the second-order identity symplectic matrix defined by
\begin{equation*}
  J=\begin{bmatrix} 0 & 1 \\ -1 & 0 \end{bmatrix}.
\end{equation*}
$T>0$ is a fixed positive constant, and the parameter $\nu\geq 0$ determines the scale of the final time.

In this work, two cases are considered: $\nu=0$ and $\nu=1$. The case $\nu=0$ corresponds to simulations over a fixed time interval independent of $\varepsilon$, in which the motion is dominated by high-frequency cyclotron rotation on a small scale, accompanied by a limited drift effect. When $\nu=1$, the interpretation of $\varepsilon$ is characterized by the following two assumptions in plasma physics \cite{CLMZ2017,FXS2018}: (\romannumeral1) the ratio of the plasma frequency to the cyclotron frequency is $\varepsilon$; (\romannumeral2) the product of the macroscopic time scale and the plasma frequency is equal to $\varepsilon^{-1}$. For single-particle motion in particular, the particle undergoes high-frequency cyclotron rotation with period $O(\varepsilon)$ while simultaneously exhibiting slower dynamics—known as the “guiding-center $E\times B$ drift”—on the $O(\varepsilon^{-1})$ scale. Consequently, the equation (\ref{CPD2D-homo}) exhibits pronounced multiscale characteristics. A visual illustration of charged-particle trajectories under different magnetic fields in the case $\nu=1$ is presented in Figure \ref{fig0}. It can be observed that all particle trajectories exhibit broadly similar behavior. The stronger magnetic field confines the particles more effectively, resulting in a smaller cyclotron radius and a longer characteristic motion time.

\begin{figure}[htb]
	\centering
	\includegraphics[width=0.3\linewidth]{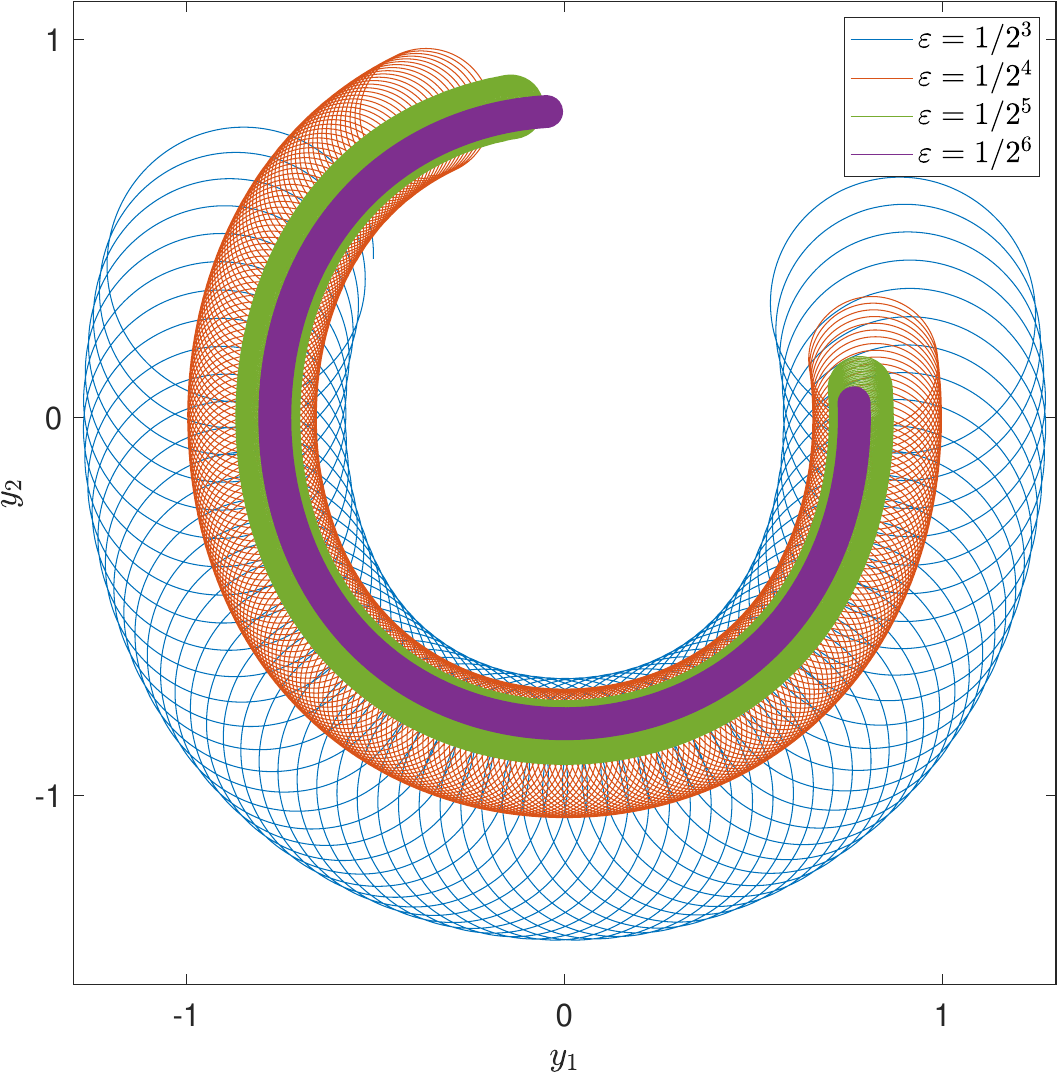}
  \caption{Trajectories of a charged particle in an electromagnetic field for different magnetic flux densities with $\nu=1$. This schematic illustration shows the simulation results over the time interval $[0,6/\varepsilon]$, using the electric field function and initial conditions from numerical Example \ref{Ex3} in Section \ref{Sec4}.}\label{fig0}
\end{figure}

As a classical model in electromagnetism, the CPD (\ref{CPD2D-homo}) has a long research history from the physical perspective \cite{N1983,AKNI2006,BL2004,T2004,CB2009}. From a computational viewpoint, numerical methods can be classified into two categories according to the magnetic field regime: normal magnetic fields ($\varepsilon\approx 1$) and strong magnetic fields ($0<\varepsilon\ll 1$). Early studies primarily focused on the former, including the well-known Boris algorithm \cite{B1970} and related works \cite{QZXLST2013,HL2018}, which are widely used in engineering applications because of their long-term stability and high efficiency. In addition, subsequent studies developed volume-preserving methods \cite{HSLQ2015}, symplectic methods \cite{HZSLQ2017,T2016,ZQTLHX2016}, energy-preserving methods \cite{BMR2019,LW2019}, splitting methods \cite{KKKO2015}, and other related algorithms \cite{HL2017,HL2020}. However, when these schemes are applied directly to the parameter regime $0<\varepsilon\ll 1$, their performance is often severely restricted, mainly in two respects. First, the stringent stability condition is imposed, requiring the time step to resolve the highest frequency, i.e., $h<O(\varepsilon)$, which leads to a substantial computational burden. Second, the numerical error of most classical schemes often depends on negative powers of $\varepsilon$.

To address these limitations, several methods with improved or uniform accuracy have been developed in recent years, with the primary objective of designing schemes whose error is independent of $\varepsilon$. Representative examples include the filtered Boris algorithm \cite{HLW2020} and some splitting methods \cite{WZ2021,Y2024}; however, these approaches still suffer from certain step-size restrictions. Other numerical methods have also been incorporated into the PIC framework, such as asymptotic-preserving methods \cite{CC2023,FR2016,FRT2026}, multi-revolution composition methods \cite{CCZ2018,CCLMZ2020}, micro-macro methods \cite{CCLMZ2020}, and two-scale formulation methods \cite{CLMZ2017,CCLMZ2019,WZ2023}. These methods permit the use of large time steps that are not constrained by $\varepsilon$. Nevertheless, most of them are at most second-order convergent. Although the construction of higher-order schemes while preserving the remarkable property of uniform accuracy has attracted attention in recent years \cite{WJ2023}, this topic still demands further investigation and deeper insight. Moreover, most existing studies on uniformly accurate methods focus on short-time simulations with $\nu=0$, and research on the case $\nu=1$ remains limited \cite{CLMZ2017,FHS2015,FRT2026,FXS2018}.

This work focuses on improving traditional exponential integrators (EIs) when applied to (\ref{CPD2D-homo}) in strong magnetic fields. It is well known that EIs have attracted widespread attention since the beginning of this century due to their advantages in solving stiff first-order equations \cite{CM2002,K2005,T2006,CCO2008,MW2017}. The review \cite{HO2010} provides a basic introduction. Several studies have also combined EIs with linearization techniques \cite{RG1997,DBCOJ2007,KO2013}, such as the exponential Rosenbrock-type methods (ERM) \cite{CO2009,HOS2009,LL2023}. Some recent works applying EIs to (\ref{CPD2D-homo}) have aimed to improve computational efficiency and structure-preserving properties \cite{FHS2015,LW2022,NJT2024,NJT2025}. Although these studies have explored the potential of EIs for this problem, they do not conduct in-depth investigations into the uniformity of the error with respect to $\varepsilon$, nor have they extended this property to high-order schemes. Our goal is to propose a new class of EIs that achieves high-order uniform convergence, permits time steps unrestricted by $\varepsilon$, and remains applicable to both parameter choices $\nu=0$ and $\nu=1$.

Our application of EIs is based on the local linear extension technique proposed in \cite{QDZ2025} very recently, whose main idea is to achieve high-order linearization of nonlinear components through a dimension-raising approach. Specifically, based on the numerical solution at the $n$-th time step, a finite number of new auxiliary variables are introduced, each defined as a polynomial in the state variables of degree at most $k$. The governing equations for these auxiliary variables are then derived, yielding a new system of ordinary differential equations in a higher-dimensional space. The EI is subsequently applied to this extended system, and the solution is then projected back to the original space to construct the numerical approximation at next time step.

We present a detailed theoretical analysis of this method applied to (\ref{CPD2D-homo}). For the case $\nu=0$, we prove that, when auxiliary polynomials of degree at most $k$ and time step $\Delta t$ are used, the method achieves error bounds of $\min\{O(\varepsilon^{k+2}),O(\varepsilon \Delta t^{k+1})\}$ for $\mathbf{y}$ and $\min\{O(\varepsilon^{k+1}),O(\Delta t^{k+1})\}$ for $\dot{\mathbf{y}}$. The convergence order is improved by directly increasing the degree $k$, without imposing additional order conditions. Thus, the proposed method can be regarded as a design framework capable of achieving arbitrarily high-order convergence. Furthermore, a notable and practically useful feature of the new EIs is that, in addition to the conventional bound expressed in powers of $\Delta t$, there exists a distinct bound expressed in powers of $\varepsilon$. This ensures that, even when the method uses large step sizes up to $O(1)$, the resulting error remains significantly small. Furthermore, for the case $\nu=1$, if the step size is chosen as $\varepsilon^{-1}\Delta t$, the numerical scheme attains a convergence order of $O(\Delta t^{k+1})$. This highlights another key feature of the method: it permits large step sizes of $O(\varepsilon^{-1})$ while still guaranteeing a uniformly high convergence order. Consequently, long-term problems can be solved efficiently using a finite and relatively small number of time steps, thereby greatly enhancing computational efficiency.

The remainder of this paper is organized as follows. In Section \ref{Sec2}, we define the local linear extension variables and the associated system, on the basis of which new EIs are constructed. Section \ref{Sec3} presents a rigorous convergence analysis under different parameter regimes, establishing the high-order and uniform accuracy of the proposed methods. In Section \ref{Sec4}, numerical experiments are presented to verify the theoretical convergence results. The final section concludes the paper and provides an outlook on future research directions.

\section{Numerical Schemes}\label{Sec2}
We begin by rescaling the velocity variable and introducing $\mathbf{x}=[\mathbf{y}^\top,\varepsilon\dot{\mathbf{y}}^\top]^\top$, which transforms the CPD equation (\ref{CPD2D-homo}) into the following equivalent first-order system:
\begin{equation}\label{P}
\begin{aligned}
  &\dot{\mathbf{x}}=\frac{1}{\varepsilon}A_1\mathbf{x}+\varepsilon\mathbf{f}(\mathbf{x}),\quad t\in \left[0,\frac{T}{\varepsilon^\nu}\right],\\
  &\mathbf{x}(0)=\mathbf{x}_0,
\end{aligned}
\end{equation}
where
\begin{equation}\label{Afx0}
  A_1=\left[\begin{array}{cc}
  \mathbf{0} & I_2 \\ \mathbf{0} & J             \end{array}\right],\quad
  \mathbf{f}(\mathbf{x})=\left[\begin{array}{c}
  \mathbf{0} \\ \mathbf{E}(\mathbf{y})
  \end{array}\right],\quad
  \mathbf{x}_0=\left[\begin{array}{c}
  \mathbf{y}_0 \\ \varepsilon\dot{\mathbf{y}}_0 \end{array}\right].
\end{equation}
Here, $I_m$ denotes the $m$-dimensional identity matrix.

To define higher-dimensional variables, we first introduce some notation for multi-indices. For a nonnegative integer $j$, let
\begin{equation*}
  \mathbb{I}^{[[j]]}:=\left\{\alpha=[\alpha_1,\alpha_2,\alpha_3,\alpha_4]^\top:\sum_{l=1}^4\alpha_l=j~\text{with}~\alpha_1,\cdots,\alpha_4\in\{0,\cdots,j\}\right\}.
\end{equation*}
We call an element $\alpha\in\mathbb{I}^{[[j]]}$ a multi-index and refer to $j$ as its length, denoted by $|\alpha|=j$. Thanks to the component-wise structure of multi-indices, we can equip $\mathbb{I}^{[[j]]}$ with the lexicographic order. This order assigns a unique integer label to each multi-index; equivalently, it specifies a bijection $\tau^{[[j]]}:\mathbb{I}^{[[j]]}\to\{1,\dots,D^{[[j]]}\}$, where $D^{[[j]]}=|\mathbb{I}^{[[j]]}|$ is the cardinality of $\mathbb{I}^{[[j]]}$. For $j=1$, we set
\begin{equation*}
  \tau^{[[1]]}(e_l^4)=l,\quad l=1,2,3,4,
\end{equation*}
with $e_l^4$ the $l$-th standard unit vector in $\mathbb{R}^4$. Let $\mathbb{I}^{[k]}=\bigcup_{j=0}^k \mathbb{I}^{[[j]]}$ be the set of all multi-indices of length at most $k$, and let $D^{[k]}$ be its cardinality. We then define an ordering on $\mathbb{I}^{[k]}$ through a bijection $\tau^{[k]}:\mathbb{I}^{[k]}\to\{1,\dots,D^{[k]}\}$ by arranging the multi-indices in increasing order of length. With this convention, the zero multi-index (of length $0$) comes first: $$\tau^{[k]}([0,0,0,0]^\top)=1.$$
For a multi-index $\alpha\in\mathbb{I}^{[[j]]}$ with $j\ge 1$, we set
 $$
\tau^{[k]}(\alpha)=D^{[j-1]}+\tau^{[[j]]}(\alpha).$$
In particular, this yields
\begin{equation}\label{tau_k}
  \tau^{[k]}(e_l^4)=l+1,\quad l=1,2,3,4.
\end{equation}

For a given point $\hat{\mathbf{x}}\in\mathbb{R}^4$, the multi-index $\alpha\in\mathbb{I}^{[k]}$ induces a polynomial variable in $\mathbf{x}$ defined by
\begin{equation}\label{LLEV01}
  (\mathbf{x}-\hat{\mathbf{x}})^\alpha:=\prod_{l=1}^4(x_l-\hat{x}_l)^{\alpha_l},
\end{equation}
where $x_i$ and $\hat{x}_i$ denote the $i$-th components of $\mathbf{x}$ and $\hat{\mathbf{x}}$, respectively. With these preparations, we introduce the following definition of local linear extension variables.
\begin{Definition}\label{LLEV}
For an integer $0\leq j\leq k$, the sets of degree-$j$ polynomials and polynomials of degree at most $k$ of the form (\ref{LLEV01}) are denoted, respectively, by
\begin{equation}\label{LLEV02}
  P_\mathbf{x}^{[[j,\hat{\mathbf{x}}]]}=\left\{(\mathbf{x}-\hat{\mathbf{x}})^\alpha:\alpha\in\mathbb{I}^{[[j]]}\right\},  P_\mathbf{x}^{[k,\hat{\mathbf{x}}]}=\bigcup_{j=0}^k P_\mathbf{x}^{[[j,\hat{\mathbf{x}}]]}=\left\{(\mathbf{x}-\hat{\mathbf{x}})^\alpha:\alpha\in\mathbb{I}^{[k]}\right\}.
\end{equation}
We define a $D^{[k]}$-dimensional vector $\mathbf{x}^{[k,\hat{\mathbf{x}}]}$, whose $i$-th component is given by $(\mathbf{x}-\hat{\mathbf{x}})^{(\tau^{[k]})^{-1}(i)}\in P_\mathbf{x}^{[k,\hat{\mathbf{x}}]}$. This vector is referred to as the $k$-th order local linear extension variable of $\mathbf{x}$ at $\hat{\mathbf{x}}$.
\end{Definition}
By the multi-index ordering scheme, the entries of the vector $\mathbf{x}^{[k,\hat{\mathbf{x}}]}$ are arranged in ascending order of degree. From (\ref{tau_k}) and (\ref{LLEV01}) we see that the first entry is the constant $1$, while the second through fifth entries are first-degree polynomials satisfying $\Pi_{\mathbf{x}}\mathbf{x}^{[k,\hat{\mathbf{x}}]}=\mathbf{x}-\hat{\mathbf{x}}$. Here $\Pi_{\mathbf{x}}:\mathbb{R}^{D^{[k]}}\to\mathbb{R}^4$ denotes the projection onto coordinates $2, 3, 4$, and $5$ of a $D^{[k]}$-dimensional vector.

For a multi-index $\alpha\in\mathbb{I}^{[k]}$ we define the associated partial differential operator of order $|\alpha|$ with respect to $\mathbf{x}$ by
$$
\frac{\partial^\alpha}{\partial\mathbf{x}^\alpha}:=\frac{\partial^{|\alpha|}}{\partial x_1^{\alpha_1}\cdots\partial x_4^{\alpha_4}}.
$$
Note that the electric field $\mathbf{E}$ depends only on $\mathbf{y}$, the first two components of $\mathbf{x}$. Hence the $k$-th order Taylor expansion of $\mathbf{E}$ around a point $\hat{\mathbf{y}}\in\mathbb{R}^2$ can be written as
\begin{equation}\label{ParDiff01}
  \mathbf{E}(\mathbf{y})=\sum_{j=0}^{k}\sum_{|\alpha|=j}\frac{\mathrm{I}_{\{\alpha_3+\alpha_4=0\}}}{\alpha_1!\alpha_2!}\frac{\partial^\alpha \mathbf{E}(\hat{\mathbf{y}})}{\partial\mathbf{y}^\alpha}(\mathbf{y}-\hat{\mathbf{y}})^\alpha + \mathbf{r}^{k+1}(\mathbf{y};\hat{\mathbf{y}}),
\end{equation}
where $\mathrm{I}_{\{\cdot\}}$ denotes the indicator function that selects the multi-indices corresponding to $\mathbf{y}$, taking the value $1$ when the condition holds and $0$ otherwise. Consequently, only multi-indices with $\alpha_3=\alpha_4=0$ contribute to the sum. The term $\mathbf{r}^{k+1}(\mathbf{y};\hat{\mathbf{y}})$ represents the remainder of the $(k+1)$-th order Taylor expansion at $\hat{\mathbf{y}}$, satisfying
\begin{equation}\label{ParDiff04}
  \mathbf{r}^{k+1}(\mathbf{y};\hat{\mathbf{y}})=\sum_{|\alpha|=k+1}\frac{\mathrm{I}_{\{\alpha_3+\alpha_4=0\}}}{\alpha_1!\alpha_2!}\frac{\partial^\alpha \mathbf{E}(\eta)}{\partial\mathbf{y}^\alpha}(\mathbf{y}-\hat{\mathbf{y}})^\alpha,\, \eta\in\mathbb{R}^2.
\end{equation}

We next derive the governing equation for $\mathbf{x}^{[k,\hat{\mathbf{x}}]}$. Since the variables in $P_\mathbf{x}^{[k,\hat{\mathbf{x}}]}$ are also unknown, it is necessary to derive the equations satisfied by the higher-degree polynomial variables that are additionally introduced as extension variables to close the system. Given $\hat{\mathbf{x}}\in\mathbb{R}^4$ and a multi-index $\alpha\in\mathbb{I}^{[k]}$, it follows from (\ref{P}) and (\ref{LLEV01}) that
\begin{align}\label{ParDiff02}
\frac{\mathrm{d}}{\mathrm{d}t}(\mathbf{x}-\hat{\mathbf{x}})^\alpha =& \sum_{j=1}^4\alpha_j(x_1-\hat{x}_1)\cdots(x_j-\hat{x}_j)^{\alpha_j-1}\cdots(x_4-\hat{x}_4)\frac{\mathrm{d}(x_j-\hat{x}_j)}{\mathrm{d}t}\notag\\
=& \sum_{j=1}^4\alpha_j(\mathbf{x}-\hat{\mathbf{x}})^{\alpha-e_j^4}\left(\frac{1}{\varepsilon}\sum_{l=1}^4(A_1)_{jl}x_l+\varepsilon f_j(\mathbf{x})\right)\notag\\
=& \frac{1}{\varepsilon}\sum_{j=1}^4\sum_{l=1}^4\alpha_j(A_1)_{jl}(\mathbf{x}-\hat{\mathbf{x}})^{\alpha-e_j^4+e_l^4} + \frac{1}{\varepsilon}\sum_{j=1}^4\sum_{l=1}^4\alpha_j(A_1)_{jl}\hat{x}_l(\mathbf{x}-\hat{\mathbf{x}})^{\alpha-e_j^4} \notag\\ &+\varepsilon\sum_{j=1}^4\sum_{l=0}^{k-|\alpha|+1}\sum_{|\beta|=l}\frac{\alpha_j}{\beta_1!\cdots\beta_4!}\frac{\partial^\beta f_j(\hat{\mathbf{x}})}{\partial\mathbf{x}^\beta}(\mathbf{x}-\hat{\mathbf{x}})^{\alpha+\beta-e_j^4} \notag\\
&+\varepsilon\sum_{j=1}^4\alpha_j(\mathbf{x}-\hat{\mathbf{x}})^{\alpha-e_j^4}\bar{r}_j^{k-|\alpha|+2}(\mathbf{x};\hat{\mathbf{x}}).
\end{align}
In the last equality, we perform a Taylor expansion of $f_j$ up to order $k-|\alpha|+1$, where $f_j$ and $\bar{r}_j^{k-|\alpha|+2}$ denote, respectively, the $j$-th component of $\mathbf{f}$ and its Taylor remainder. The term $\bar{\mathbf{r}}^{k-|\alpha|+2}$ takes the form
\begin{equation}\label{ParDiff03}
  \bar{\mathbf{r}}^{k-|\alpha|+2}(\mathbf{x};\hat{\mathbf{x}})
  :=\left[\begin{array}{c}
      \mathbf{0} \\
      \mathbf{r}^{k-|\alpha|+2}(\mathbf{y};\hat{\mathbf{y}})\\
  \end{array}\right].
\end{equation}
Substituting the vector and matrix structures given in (\ref{ParDiff03}) and (\ref{Afx0}) into (\ref{ParDiff02}), and partitioning the summation over $j$ into the two parts $j=1,2$ and $j=3,4$, we obtain
\begin{align}\label{Assem01}
&\frac{\mathrm{d}}{\mathrm{d}t}(\mathbf{x}-\hat{\mathbf{x}})^\alpha \notag\\
&=\frac{1}{\varepsilon}\left(\alpha_1(\mathbf{x}-\hat{\mathbf{x}})^{\alpha-e_1^4+e_3^4} +\alpha_2(\mathbf{x}-\hat{\mathbf{x}})^{\alpha-e_2^4+e_4^4} +\alpha_1\hat{x}_3(\mathbf{x}-\hat{\mathbf{x}})^{\alpha-e_1^4} +\alpha_2\hat{x}_4(\mathbf{x}-\hat{\mathbf{x}})^{\alpha-e_2^4}\right)\notag\\
&+\frac{1}{\varepsilon}\left(\alpha_3(\mathbf{x}-\hat{\mathbf{x}})^{\alpha-e_3^4+e_4^4} -\alpha_4(\mathbf{x}-\hat{\mathbf{x}})^{\alpha-e_4^4+e_3^4} + \alpha_3\hat{x}_4(\mathbf{x}-\hat{\mathbf{x}})^{\alpha-e_3^4} - \alpha_4\hat{x}_3(\mathbf{x}-\hat{\mathbf{x}})^{\alpha-e_4^4}\right)\notag\\
&+\varepsilon\sum_{l=0}^{k-|\alpha|+1}\sum_{|\beta|=l}\frac{\mathrm{I}_{\{\beta_3+\beta_4=0\}}}{\beta_1!\beta_2!}\left(\alpha_3\frac{\partial^{\beta} E_1(\hat{\mathbf{y}})}{\partial\mathbf{y}^\beta}(\mathbf{x}-\hat{\mathbf{x}})^{\alpha+\beta-e_3^4} +\alpha_4\frac{\partial^{\beta} E_2(\hat{\mathbf{y}})}{\partial\mathbf{y}^\beta}(\mathbf{x}-\hat{\mathbf{x}})^{\alpha+\beta-e_4^4}\right)\notag\\
&+\varepsilon\alpha_3(\mathbf{x}-\hat{\mathbf{x}})^{\alpha-e_3^4}r_1^{k-|\alpha|+2}(\mathbf{y};\hat{\mathbf{y}}) +\varepsilon\alpha_4(\mathbf{x}-\hat{\mathbf{x}})^{\alpha-e_4^4}r_2^{k-|\alpha|+2}(\mathbf{y};\hat{\mathbf{y}}).
\end{align}

It is clear that, except for the last two terms involving the remainders $r_j^{k-|\alpha|+2}(\mathbf{y};\hat{\mathbf{y}})(j=1,2)$, the remaining terms can be represented as linear combinations of polynomials in $P_{\mathbf{x}}^{[k,\hat{\mathbf{x}}]}$. By collecting the coefficients of identical polynomials and combining like terms, the portion of (\ref{Assem01}) excluding the remainder terms can be expressed as the product of a coefficient vector and $\mathbf{x}^{[k,\hat{\mathbf{x}}]}$. Applying (\ref{Assem01}) to differentiate each polynomial in $P_{\mathbf{x}}^{[k,\hat{\mathbf{x}}]}$, assembling these coefficient vectors from each resulting equation, and then separating the $O(\frac{1}{\varepsilon})$ and $O(\varepsilon)$ components, we obtain two coefficient matrices $A_1^{[k]}(\hat{\mathbf{x}})$ and $A_0^{[k]}(\hat{\mathbf{x}})$. Consequently, the equation satisfied by the local linear extension variable $\mathbf{x}^{[k,\hat{\mathbf{x}}]}$ takes the form
\begin{equation}\label{LLES01}
  \frac{\mathrm{d}\mathbf{x}^{[k,\hat{\mathbf{x}}]}}{\mathrm{d}t} =\frac{1}{\varepsilon}A_1^{[k]}(\hat{\mathbf{x}})\mathbf{x}^{[k,\hat{\mathbf{x}}]} +\varepsilon A_0^{[k]}(\hat{\mathbf{x}})\mathbf{x}^{[k,\hat{\mathbf{x}}]} +\varepsilon\mathbf{R}^{[k]}(\mathbf{x}^{[k,\hat{\mathbf{x}}]};\hat{\mathbf{x}}).
\end{equation}
If the $i$-th row of (\ref{LLES01}) corresponds to the equation associated with the polynomial generated by the multi-index $\alpha$, that is, $i=\tau^{[k]}(\alpha)$, then (\ref{Assem01}) implies that the $i$-th component of $\mathbf{R}^{[k]}(\mathbf{x}^{[k,\hat{\mathbf{x}}]};\hat{\mathbf{x}})$ can be written as
\begin{equation}\label{LLES02}
  R_i^{[k]}(\mathbf{x}^{[k,\hat{\mathbf{x}}]};\hat{\mathbf{x}})=\alpha_3(\mathbf{x}-\hat{\mathbf{x}})^{\alpha-e_3^4}r_1^{k-|\alpha|+2}(\mathbf{y};\hat{\mathbf{y}}) +\alpha_4(\mathbf{x}-\hat{\mathbf{x}})^{\alpha-e_4^4}r_2^{k-|\alpha|+2}(\mathbf{y};\hat{\mathbf{y}}).
\end{equation}

Since the two matrix-valued functions $\hat{\mathbf{x}}\mapsto A_i^{[k]}(\hat{\mathbf{x}})(i=0,1)$ do not admit simple closed-form expressions, their construction is carried out algorithmically. The main idea is to assign values to the corresponding matrix entries element-wise. Taking the first term in (\ref{Assem01}), $\alpha_1(\mathbf{x}-\hat{\mathbf{x}})^{\alpha-e_1^4+e_3^4}$, as an example, this indicates that, for any multi-index $\alpha\in\mathbb{I}^{[k]}$, the coefficient $\alpha_1$ should be placed in the row $\tau^{[k]}(\alpha)$ and the column $\tau^{[k]}(\alpha-e_1^4+e_3^4)$ of the matrix $A_1^{[k]}$. We refer to \cite{QDZ2025} for a complete assembly procedure of the two matrices. We remark that the two matrices are independent of both $\varepsilon$ and the time step size employed in the numerical scheme. This implies that, once assembled, they can be reused directly under new parameter settings, even when $\varepsilon$ or the step size changes.

\begin{Definition}\label{LLES}
Given a positive integer $k$ and a reference point $\hat{\mathbf{x}}$, (\ref{LLES01}) is called the $k$-th order local linear extension system of (\ref{P}) at $\hat{\mathbf{x}}$ with respect to $\mathbf{x}^{[k,\hat{\mathbf{x}}]}$.
\end{Definition}

We now present the numerical scheme for solving the CPD. Let the interval $[0,T]$ be partitioned by equidistant nodes $\{t_n\}_{n=0}^N$, i.e., $0=t_0<t_1<\cdots< t_N=T$, with $\Delta t=t_n-t_{n-1}$ for $n=1,\cdots,N$. The time step used in the algorithm is $h=\Delta t/\varepsilon^\nu$.

We construct a numerical scheme for solving (\ref{P}) based on the local linear extension system (\ref{LLES01}). Omitting the remainder term $\mathbf{R}^{[k]}$ therein and choosing the reference point as the numerical solution $\hat{\mathbf{x}}=\mathbf{X}_n$, we denote by $\mathbf{X}^{[k,\mathbf{X}_n]}$ the $k$-th order local linear extension variable that satisfies the resulting equation. From Definition \ref{LLEV}, its value at $t=t_n/\varepsilon^\nu$ is given by $\mathbf{X}^{[k,\mathbf{X}_n]}(t_n/\varepsilon^\nu)=e_1^{D^{[k]}}$. Thus, the initial value problem satisfied by $\mathbf{X}^{[k,\mathbf{X}_n]}(t)$ on the interval $[t_n/\varepsilon^\nu,t_{n+1}/\varepsilon^\nu]$ is defined as
\begin{equation}\label{LLES-num-truncation}
\begin{aligned}
  &\frac{\mathrm{d}\mathbf{X}^{[k,\mathbf{X}_n]}}{\mathrm{d}t} =\frac{1}{\varepsilon} A_1^{[k]}(\mathbf{X}_n) \mathbf{X}^{[k,\mathbf{X}_n]} +\varepsilon A_0^{[k]}(\mathbf{X}_n)\mathbf{X}^{[k,\mathbf{X}_n]}, \quad \frac{t_n}{\varepsilon^\nu}\leq t\leq\frac{t_{n+1}}{\varepsilon^\nu},\\
  &\mathbf{X}^{[k,\mathbf{X}_n]}(t_n/\varepsilon^\nu)=e_1^{D^{[k]}}.
\end{aligned}
\end{equation}
Its exact solution at $t=t_{n+1}/\varepsilon^\nu$ is obtained by direct matrix exponentiation:
\begin{equation}\label{LLEEI01}
  \mathbf{X}^{[k,\mathbf{X}_n]}(t_{n+1}/\varepsilon^\nu)=\mathrm{e}^{\left(A_1^{[k]}(\mathbf{X}_n)/\varepsilon^{1+\nu} +\varepsilon^{1-\nu}A_0^{[k]}(\mathbf{X}_n)\right)\Delta t} e_1^{D^{[k]}}.
\end{equation}
Finally, by Definition \ref{LLEV} and (\ref{LLEV01}), the numerical solution of (\ref{P}) at $t=t_{n+1}/\varepsilon^\nu$ is defined as
\begin{equation}\label{LLEEI02}
  \mathbf{X}_{n+1}:=\Pi_{\mathbf{x}}\mathbf{X}^{[k,\mathbf{X}_n]}(t_{n+1}/\varepsilon^\nu)+\mathbf{X}_n.
\end{equation}

In summary, we present a complete description of this new EI. First, the matrices $A_1^{[k]}(\cdot)$ and $A_0^{[k]}(\cdot)$ are assembled as matrix-valued functions. At each time step, the numerical solution $\mathbf{X}_n$ from the previous step is substituted to obtain $A_1^{[k]}(\mathbf{X}_n)$ and $A_0^{[k]}(\mathbf{X}_n)$. Next, we compute (\ref{LLEEI01}) and (\ref{LLEEI02}) sequentially. Finally, the numerical solutions for the particle position $\mathbf{y}(t_{n+1}/\varepsilon^\nu)$ and velocity $\dot{\mathbf{y}}(t_{n+1}/\varepsilon^\nu)$ are given, respectively, by
\begin{equation}\label{LLEEI03}
  \mathbf{Y}_{n+1}:=\left[
  \begin{array}{cc} I_2 & \mathbf{0} \end{array}\right]\mathbf{X}_{n+1},\quad \dot{\mathbf{Y}}_{n+1}:=\varepsilon^{-1}\left[
  \begin{array}{cc} \mathbf{0} & I_2 \end{array}\right]\mathbf{X}_{n+1}.
\end{equation}

The scheme defined by (\ref{LLEEI01})–(\ref{LLEEI03}) is fully explicit at each time step and requires no iterative procedure. In the subsequent theoretical analysis, we will show that the convergence order of the numerical scheme increases with $k$. This indicates that the local linear extension EIs improve their order simply by enlarging the size of the matrices $A_1^ {[k]}$ and $A_0^{[k]}$. Consequently, the above procedure may be regarded as a unified high-precision solution framework for CPD, which can theoretically attain arbitrarily high convergence order without imposing any order conditions.

\section{Convergence analysis}\label{Sec3}
\subsection{Preliminary results}\label{Sec3-1}
This subsection presents some preliminary results for the convergence analysis. In the following analysis, $C$ always denotes a constant independent of both the time step and $\varepsilon$, and $\|\cdot\|$ denotes the standard Euclidean norm for matrices and vectors unless otherwise specified.

\begin{Lemma}\label{CPD2D-dxLemma}
Assume that $\mathbf{E}$ has bounded first derivatives, i.e. $\|\nabla\mathbf{E}\|_{L^{\infty}}\leq C$. If $\nu=0$, then for any $0\leq\hat{t}<t\leq T$, we have
\begin{equation}\label{CPD2D-dxLemma-R1}
  \|\mathbf{y}(t)-\mathbf{y}(\hat{t})\|+\varepsilon\|\dot{\mathbf{y}}(t)-\dot{\mathbf{y}}(\hat{t})\|\leq C\min\{\varepsilon,t-\hat{t}\}.
\end{equation}
If $\nu=1$, then for any $0\leq\hat{t}<t\leq T$ satisfying $t-\hat{t}>c_2\varepsilon$, where $c_2>0$ is independent of $\varepsilon$, we have
\begin{equation}\label{CPD2D-dxLemma-R2}
  \|\mathbf{y}(t/\varepsilon)-\mathbf{y}(\hat{t}/\varepsilon)\| +\varepsilon\|\dot{\mathbf{y}}(t/\varepsilon)-\dot{\mathbf{y}}(\hat{t}/\varepsilon)\|\leq C(t-\hat{t}).
\end{equation}
\end{Lemma}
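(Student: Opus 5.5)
Throughout I write $\mathbf{v}=\varepsilon\dot{\mathbf{y}}$, so that (\ref{P}) splits into $\dot{\mathbf{y}}=\varepsilon^{-1}\mathbf{v}$ and $\dot{\mathbf{v}}=\varepsilon^{-1}J\mathbf{v}+\varepsilon\mathbf{E}(\mathbf{y})$. The plan rests on two facts: a uniform a priori bound on $\mathbf{v}$, and an exact antiderivative of the fast rotation. Since $\mathrm{e}^{sJ/\varepsilon}$ is a rotation, Duhamel's formula gives $\mathbf{v}(t)=\mathrm{e}^{tJ/\varepsilon}\mathbf{v}(0)+\varepsilon\int_0^t\mathrm{e}^{(t-s)J/\varepsilon}\mathbf{E}(\mathbf{y}(s))\,\mathrm{d}s$. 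One caveat: the lemma only assumes $\nabla\mathbf{E}$ bounded, so $\|\mathbf{E}(\mathbf{y})\|\le C(1+\|\mathbf{y}\|)$. I would therefore first bound $\|\mathbf{y}\|$, by Gronwall for $\nu=0$ and by the drift estimate below for $\nu=1$, or else assume $\mathbf{E}$ bounded as is implicit in the paper. This yields $\|\mathbf{v}(t)\|\le C$ on $[0,T]$ when $\nu=0$, and $\|\mathbf{v}(t)\|\le C$ on $[0,T/\varepsilon]$ when $\nu=1$, because there $\varepsilon\cdot T/\varepsilon=T$.

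For the velocity part, integrate the $\mathbf{v}$-equation directly:
\[
\mathbf{v}(t)-\mathbf{v}(\hat t)=\varepsilon^{-1}J\int_{\hat t}^t\mathbf{v}\,\mathrm{d}s+\varepsilon\int_{\hat t}^t\mathbf{E}\,\mathrm{d}s .
\]
The key observation is that $\varepsilon^{-1}\int_{\hat t}^t\mathbf{v}\,\mathrm{d}s=\mathbf{y}(t)-\mathbf{y}(\hat t)$. Hence $\varepsilon\|\dot{\mathbf{y}}(t)-\dot{\mathbf{y}}(\hat t)\|\le\|\mathbf{y}(t)-\mathbf{y}(\hat t)\|+C\varepsilon(t-\hat t)$, and the velocity bound reduces to the position bound. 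For $\nu=1$ the time variable is rescaled, so the last term is $C\varepsilon(t-\hat t)/\varepsilon=C(t-\hat t)$, which is acceptable.

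For the position, the trivial estimate is $\|\mathbf{y}(t)-\mathbf{y}(\hat t)\|\le\varepsilon^{-1}\int_{\hat t}^t\|\mathbf{v}\|\,\mathrm{d}s\le C(t-\hat t)/\varepsilon$. That is too weak, and getting the $\min\{\varepsilon,t-\hat t\}$ bound is the main obstacle. I would use the identity $J^{-1}\dot{\mathbf{v}}=\varepsilon^{-1}\mathbf{v}+\varepsilon J^{-1}\mathbf{E}$, which gives
\[
\mathbf{y}(t)-\mathbf{y}(\hat t)=J^{-1}\bigl(\mathbf{v}(t)-\mathbf{v}(\hat t)\bigr)-\varepsilon J^{-1}\int_{\hat t}^t\mathbf{E}\,\mathrm{d}s .
\]

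For $\nu=0$ this gives an $O(1)$ bound for the position difference, not $O(\varepsilon)$. So either the stated bound must be read with $\mathbf{x}_0=[\mathbf{y}_0^\top,\varepsilon\dot{\mathbf{y}}_0^\top]^\top$ and an $O(1)$ $\dot{\mathbf{y}}_0$ assumed, or something extra is needed. I would check the scaling: with $\mathbf{v}=O(\varepsilon)$, i.e. $\dot{\mathbf{y}}_0=O(1)$, Duhamel gives $\|\mathbf{v}\|\le C\varepsilon$ on $[0,T]$. The identity then gives $\|\Delta\mathbf{y}\|\le C\varepsilon$, and the trivial estimate gives $\|\Delta\mathbf{y}\|\le C(t-\hat t)$; combined, this is the $\min\{\varepsilon,t-\hat t\}$ bound. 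The velocity bound then follows from the reduction above: $\varepsilon\|\Delta\dot{\mathbf{y}}\|\le C\min\{\varepsilon,t-\hat t\}+C\varepsilon(t-\hat t)$, and the last term is dominated by $C\min\{\varepsilon,t-\hat t\}$ since $t-\hat t\le T$.

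For $\nu=1$, the same a priori bound gives $\|\mathbf{v}\|\le C\varepsilon$ on $[0,T/\varepsilon]$, since $\varepsilon\cdot T/\varepsilon=O(1)$. The identity, applied on $[\hat t/\varepsilon,t/\varepsilon]$, gives $\|\Delta\mathbf{y}\|\le C\varepsilon+C\varepsilon(t-\hat t)/\varepsilon=C\varepsilon+C(t-\hat t)$. The hypothesis $t-\hat t>c_2\varepsilon$ absorbs $C\varepsilon$ into $C(t-\hat t)/c_2$, giving (\ref{CPD2D-dxLemma-R2}). The velocity part follows from the reduction with the rescaled $C(t-\hat t)$ term.
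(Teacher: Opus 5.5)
There is a genuine gap, and it is in the $\nu=1$ case. Your $\nu=0$ argument is correct, and it is a cleaner route than the paper's. The guiding-center identity $\mathbf{y}(t)-\mathbf{y}(\hat t)=J^{-1}(\mathbf{v}(t)-\mathbf{v}(\hat t))-\varepsilon J^{-1}\int_{\hat t}^t\mathbf{E}\,\mathrm{d}s$, combined with $\|\mathbf{v}\|\le C\varepsilon$ on $[0,T]$ (which plain Duhamel does give there), replaces the paper's integration by parts and the $\varphi_1$ bound $\|\mathrm{e}^{J(t-\hat t)/\varepsilon}-I_2\|\le C\min\{1,(t-\hat t)/\varepsilon\}$.

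The gap is at ``the same a priori bound gives $\|\mathbf{v}\|\le C\varepsilon$ on $[0,T/\varepsilon]$''. It does not. Duhamel plus the triangle inequality gives
\[
\|\mathbf{v}(t)\|\le\varepsilon\|\dot{\mathbf{y}}_0\|+\varepsilon\int_0^{T/\varepsilon}\|\mathbf{E}(\mathbf{y}(s))\|\,\mathrm{d}s\le C\varepsilon+CT,
\]
so only $\|\mathbf{v}\|=O(1)$, i.e.\ $\|\dot{\mathbf{y}}\|=O(\varepsilon^{-1})$. That is what your own first paragraph states; the computation $\varepsilon\cdot T/\varepsilon=O(1)$ says the forcing contributes $O(1)$, not $O(\varepsilon)$. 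With only this bound, the term $J^{-1}(\mathbf{v}(t/\varepsilon)-\mathbf{v}(\hat t/\varepsilon))$ in your identity is $O(1)$. The hypothesis $t-\hat t>c_2\varepsilon$ cannot absorb an $O(1)$ quantity into $C(t-\hat t)$ unless $t-\hat t$ is itself of order one. Your velocity reduction does not rescue this: $\mathbf{v}(t)-\mathbf{v}(\hat t)=J(\mathbf{y}(t)-\mathbf{y}(\hat t))+\varepsilon\int_{\hat t}^t\mathbf{E}\,\mathrm{d}s$ is the same identity rearranged. Position and velocity merely reduce to each other, so an independent $O(\varepsilon)$ bound on $\mathbf{v}$ over the long interval is indispensable.

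The missing idea is to exploit the fast rotation inside the Duhamel integral before taking norms. Write $\mathrm{e}^{(t-s)J/\varepsilon}=\varepsilon J\frac{\mathrm{d}}{\mathrm{d}s}\mathrm{e}^{(t-s)J/\varepsilon}$ and integrate by parts, as in (\ref{CPD2D-dxLemma-06}):
\[
\dot{\mathbf{y}}(t)=\mathrm{e}^{tJ/\varepsilon}\dot{\mathbf{y}}_0+\varepsilon J\bigl(\mathbf{E}(\mathbf{y}(t))-\mathrm{e}^{tJ/\varepsilon}\mathbf{E}(\mathbf{y}_0)\bigr)-\varepsilon J\int_0^t\mathrm{e}^{(t-s)J/\varepsilon}\nabla\mathbf{E}(\mathbf{y}(s))\dot{\mathbf{y}}(s)\,\mathrm{d}s .
\]
Hence $\|\dot{\mathbf{y}}(t)\|\le C+C\varepsilon\int_0^t\|\dot{\mathbf{y}}(s)\|\,\mathrm{d}s$. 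Gronwall over a horizon of length $T/\varepsilon$ then gives $\|\dot{\mathbf{y}}\|\le C\mathrm{e}^{CT}$, i.e.\ $\|\mathbf{v}\|\le C\varepsilon$ on $[0,T/\varepsilon]$.

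This step needs $\mathbf{E}(\mathbf{y}(s))$ bounded along the trajectory. You can obtain that first by combining your identity with Duhamel: $\|\mathbf{y}(t)\|\le C+2\varepsilon\int_0^t C(1+\|\mathbf{y}(s)\|)\,\mathrm{d}s$. Gronwall closes here because the rate $\varepsilon$ is paired with the horizon $T/\varepsilon$.

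Once $\|\mathbf{v}\|\le C\varepsilon$ is established, your $\nu=1$ position argument works as written. The velocity estimate is then immediate: $\|\mathbf{v}(t/\varepsilon)-\mathbf{v}(\hat t/\varepsilon)\|\le 2C\varepsilon\le 2C(t-\hat t)/c_2$, which is exactly how the paper concludes.
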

\begin{Remark}
{As will be shown later, the error bound of the local linear extension EIs exhibits piecewise behavior as the step size varies. The notation $c_2$ corresponds to the second transition point, whose estimation and physical interpretation are elaborated in Section \ref{Sec3-3}.}
\end{Remark}

\begin{proof}
We divide the proof into three parts.

\textbf{The boundedness.} We first derive the following bounds:
\begin{equation*}
  \|\mathbf{y}(t/\varepsilon^\nu)\|+\|\dot{\mathbf{y}}(t/\varepsilon^\nu)\|\leq C, \quad t\in[0,T].
\end{equation*}
Applying the variation-of-constants formula to (\ref{P}) on the interval $\left[\hat{t}/\varepsilon^\nu,t/\varepsilon^\nu\right]$, we have
\begin{equation}\label{CPD2D-dxLemma-01}
  \mathbf{x}(t/\varepsilon^\nu)=\mathrm{e}^{\frac{1}{\varepsilon^{1+\nu}}A_1(t-\hat{t})}\mathbf{x}(\hat{t}/\varepsilon^\nu)+ \varepsilon\int_{\hat{t}/\varepsilon^\nu}^{t/\varepsilon^\nu}\mathrm{e}^{A_1\left(\frac{t}{\varepsilon^{1+\nu}}-\frac{s}{\varepsilon}\right)}\mathbf{f}(\mathbf{x}(s))\mathrm{d}s.
\end{equation}
Projecting (\ref{CPD2D-dxLemma-01}) onto the rows corresponding to the velocity components and employing the matrix and vector structure given in (\ref{Afx0}) yields
\begin{equation}\label{CPD2D-dxLemma-02}
  \varepsilon\dot{\mathbf{y}}(t/\varepsilon^\nu)=\mathrm{e}^{\frac{1}{\varepsilon^{1+\nu}}J(t-\hat{t})}\varepsilon\dot{\mathbf{y}}(\hat{t}/\varepsilon^\nu)+\varepsilon \int_{\hat{t}/\varepsilon^\nu}^{t/\varepsilon^\nu}\mathrm{e}^{J\left(\frac{t}{\varepsilon^{1+\nu}}-\frac{s}{\varepsilon}\right)}\mathbf{E}(\mathbf{y}(s))\mathrm{d}s.
\end{equation}
Since the eigenvalues of $J$ are $\pm\mathrm{i}$, the associated matrix exponential is bounded. Setting $\hat{t}=0$ and taking norms in (\ref{CPD2D-dxLemma-02}) leads to
\begin{align}\label{CPD2D-dxLemma-03}
  \varepsilon\|\dot{\mathbf{y}}(t/\varepsilon^\nu)\|\leq& C\varepsilon\|\dot{\mathbf{y}}_0\|+\varepsilon\int_0^{t/\varepsilon^\nu}C(\|\mathbf{E}(\mathbf{y}_0)\| +\|\nabla\mathbf{E}\|_{L^{\infty}}\|\mathbf{y}(s)-\mathbf{y}_0\|)\mathrm{d}s \notag\\
  \leq& C\varepsilon\|\dot{\mathbf{y}}_0\|+\varepsilon\int_0^{t/\varepsilon^\nu} C(1+\|\mathbf{y}(s)\|)\mathrm{d}s.
\end{align}
Similarly, projecting (\ref{CPD2D-dxLemma-01}) onto the rows corresponding to $\mathbf{y}$, we obtain
\begin{equation}\label{CPD2D-dxLemma-04}
  \|\mathbf{y}(t/\varepsilon^\nu)\|\leq C(\|\mathbf{y}_0\|+\varepsilon\|\dot{\mathbf{y}}_0\|)+\varepsilon\int_0^{t/\varepsilon^\nu} C(1+\|\mathbf{y}(s)\|)\mathrm{d}s.
\end{equation}
Combining (\ref{CPD2D-dxLemma-03}) and (\ref{CPD2D-dxLemma-04}) and applying Grönwall's inequality in integral form yields
\begin{equation}\label{CPD2D-dxLemma-05}
  \|\mathbf{y}(t/\varepsilon^\nu)\| +\varepsilon\|\dot{\mathbf{y}}(t/\varepsilon^\nu)\| \leq C(\|\mathbf{y}_0\|+\varepsilon\|\dot{\mathbf{y}}_0\|) +C\varepsilon^{1-\nu}T.
\end{equation}
From (\ref{CPD2D-dxLemma-05}), the boundedness of $\mathbf{y}$ follows immediately. Substituting this result into (\ref{CPD2D-dxLemma-03}) then yields the boundedness of $\dot{\mathbf{y}}$ for the case $\nu=0$. For the case where $\nu=1$, applying integration by parts to the integral term in (\ref{CPD2D-dxLemma-02}) gives
\begin{align}\label{CPD2D-dxLemma-06}
  \int_{\hat{t}/\varepsilon^\nu}^{t/\varepsilon^\nu}\mathrm{e}^{J\left(\frac{t}{\varepsilon^{1+\nu}}-\frac{s}{\varepsilon}\right)}\mathbf{E}(\mathbf{y}(s))\mathrm{d}s =&\varepsilon J\left(\mathbf{E}(\mathbf{y}(t/\varepsilon^\nu))-\mathrm{e}^{\frac{1}{\varepsilon^{1+\nu}}J(t-\hat{t})}\mathbf{E}(\mathbf{y}(\hat{t}/\varepsilon^\nu))\right) \notag\\
  &-\varepsilon J\int_{\hat{t}/\varepsilon^\nu}^{t/\varepsilon^\nu}\mathrm{e}^{J\left(\frac{t}{\varepsilon^{1+\nu}}-\frac{s}{\varepsilon}\right)}\nabla\mathbf{E}(\mathbf{y}(s))\dot{\mathbf{y}}(s)\mathrm{d}s.
\end{align}
Substituting (\ref{CPD2D-dxLemma-06}) into (\ref{CPD2D-dxLemma-02}) and estimating with $\hat{t}=0$ and $\nu=1$, we have
\begin{equation*}
  \|\dot{\mathbf{y}}(t/\varepsilon^\nu)\|\leq C\|\dot{\mathbf{y}}_0\|+ C\varepsilon +C\varepsilon\int_{0}^{t/\varepsilon^\nu}\|\dot{\mathbf{y}}(s)\|\mathrm{d}s.
\end{equation*}
Applying the Grönwall's inequality again, we establish the boundedness of $\dot{\mathbf{y}}$ for $\nu=1$.

\textbf{The proof for velocity components in (\ref{CPD2D-dxLemma-R1}) and (\ref{CPD2D-dxLemma-R2}).} We first subtract $\varepsilon\dot{\mathbf{y}}(\hat{t}/\varepsilon^\nu)$ from both sides of (\ref{CPD2D-dxLemma-02}).
For $\nu=1$, the boundedness of $\dot{\mathbf{y}}$, together with the condition $c_2\varepsilon<t-\hat{t}$, directly yields the estimate for $\dot{\mathbf{y}}$ stated in (\ref{CPD2D-dxLemma-R2}).

For $\nu=0$, we again substitute (\ref{CPD2D-dxLemma-06}) into (\ref{CPD2D-dxLemma-02}), obtaining
\begin{align}\label{CPD2D-dxLemma-07}
  \varepsilon(\dot{\mathbf{y}}(t)-\dot{\mathbf{y}}(\hat{t}))=
  \varepsilon\left(\mathrm{e}^{\frac{1}{\varepsilon} J(t-\hat{t})}-I_2\right)\dot{\mathbf{y}}(\hat{t}) +&\varepsilon^2 J\left(\mathbf{E}(\mathbf{y}(t))-\mathrm{e}^{\frac{1}{\varepsilon}J(t-\hat{t})}\mathbf{E}(\mathbf{y}(\hat{t}))\right) \notag\\
  -&\varepsilon^2\int_{\hat{t}}^{t}\mathrm{e}^{\frac{1}{\varepsilon}J(t-s)}J\nabla\mathbf{E}(\mathbf{y}(s))\dot{\mathbf{y}}(s)\mathrm{d}s.
\end{align}
Observe that the exponential function satisfies the identity $\mathrm{e}^z-1=z\varphi_1(z)$, where $\varphi_1(z)=\frac{e^z-1}{z}$ is an analytic function, and its restriction $\varphi_1(\mathrm{i}x)$ for $x\in\mathbb{R}\setminus\{0\}$ is bounded. Thus, we have
\begin{equation}\label{CPD2D-dxLemma-08}
  \left\|\mathrm{e}^{\frac{1}{\varepsilon} J(t-\hat{t})}-I_2\right\|\leq C\min\left\{1,\frac{t-\hat{t}}{\varepsilon}\right\}.
\end{equation}
Substituting this bound and estimating the first term in (\ref{CPD2D-dxLemma-07}) immediately yields the estimate for $\dot{\mathbf{y}}$ given in (\ref{CPD2D-dxLemma-R1}).

\textbf{The proof for position components in (\ref{CPD2D-dxLemma-R1}) and (\ref{CPD2D-dxLemma-R2}).} We integrate both sides of (\ref{CPD2D-dxLemma-02}) directly over the interval $[\hat{t},t]$ and substitute (\ref{CPD2D-dxLemma-06}):
\begin{align}\label{CPD2D-dxLemma-09}
  &\mathbf{y}(t/\varepsilon^\nu) -\mathbf{y}(\hat{t}/\varepsilon^\nu) \notag\\
  &=\frac{1}{\varepsilon^\nu}\int_{\hat{t}}^t\mathrm{e}^{\frac{1}{\varepsilon^{1+\nu}}J(s-\hat{t})}\dot{\mathbf{y}}(\hat{t}/\varepsilon^\nu)\mathrm{d}s +\frac{1}{\varepsilon^\nu}\int_{\hat{t}}^t\int_{\hat{t}/\varepsilon^\nu}^{s/\varepsilon^\nu}\mathrm{e}^{J\left(\frac{s}{\varepsilon^{1+\nu}}-\frac{\sigma}{\varepsilon}\right)}\mathbf{E}(\mathbf{y}(\sigma))\mathrm{d}\sigma\mathrm{d}s \notag\\
  &=\varepsilon J\left(I_2-\mathrm{e}^{\frac{1}{\varepsilon^{1+\nu}}J(t-\hat{t})}\right)\dot{\mathbf{y}}(\hat{t}/\varepsilon^\nu) +\varepsilon J\int_{\hat{t}/\varepsilon^\nu}^{t/\varepsilon^\nu}\left(\mathbf{E}(\mathbf{y}(s))-\mathrm{e}^{\frac{1}{\varepsilon}J\left(s-\frac{\hat{t}}{\varepsilon^\nu}\right)}\mathbf{E}(\mathbf{y}(\hat{t}/\varepsilon^\nu))\right)\mathrm{d}s \notag\\
  &\quad -\varepsilon J\int_{\hat{t}/\varepsilon^\nu}^{t/\varepsilon^\nu}\int_{\hat{t}/\varepsilon^\nu}^{s}\mathrm{e}^{\frac{1}{\varepsilon}J(s-\sigma)}\nabla\mathbf{E}(\mathbf{y}(\sigma))\dot{\mathbf{y}}(\sigma)\mathrm{d}\sigma\mathrm{d}s.
\end{align}
When $\nu=0$, each of the two integrals in (\ref{CPD2D-dxLemma-09}) is bounded by $O(\varepsilon(t-\hat{t}))$. The first term is estimated using (\ref{CPD2D-dxLemma-08}), thereby establishing the stated conclusion for $\mathbf{y}$ in (\ref{CPD2D-dxLemma-R1}). When $\nu=1$, the condition $c_2\varepsilon<t-\hat{t}$ implies that the first two terms in (\ref{CPD2D-dxLemma-09}) are bounded by $O(t-\hat{t})$. For the third term, we interchange the order of integration:
\begin{align*}
  \int_{\hat{t}/\varepsilon}^{t/\varepsilon}\int_{\hat{t}/\varepsilon}^{s}\mathrm{e}^{\frac{1}{\varepsilon}J(s-\sigma)}\nabla\mathbf{E}(\mathbf{y}(\sigma))\dot{\mathbf{y}}(\sigma)\mathrm{d}\sigma\mathrm{d}s =&\int_{\hat{t}/\varepsilon}^{t/\varepsilon}\left(\int_{\sigma}^{t/\varepsilon}\mathrm{e}^{\frac{1}{\varepsilon}J(s-\sigma)}\mathrm{d}s\right)\nabla\mathbf{E}(\mathbf{y}(\sigma))\dot{\mathbf{y}}(\sigma)\mathrm{d}\sigma \\
  =&\varepsilon J\int_{\hat{t}/\varepsilon}^{t/\varepsilon}\left(I_2-\mathrm{e}^{\frac{1}{\varepsilon}J\left(\frac{t}{\varepsilon}-\sigma\right)}\right)\nabla\mathbf{E}(\mathbf{y}(\sigma))\dot{\mathbf{y}}(\sigma)\mathrm{d}\sigma.
\end{align*}
Thus, this double integral can be bounded by $O(t-\hat{t})$. Substituting this estimate into (\ref{CPD2D-dxLemma-09}), we obtain the conclusion for $\mathbf{y}$ in (\ref{CPD2D-dxLemma-R2}).
\end{proof}

The algebraic properties of the eigenvalues associated with the linear part of (\ref{LLES01}), particularly those of the dominant matrix $A_1^{[k]}$, essentially determine the performance of the EI. We next examine the block structure, similarity transformation, and spectral properties of $A_1^{[k]}(\hat{\mathbf{x}})$. The proofs are somewhat technical and are omitted here. A complete algebraic analysis is presented in \cite{QDZ2025}.

\begin{Lemma}\label{LemA1}
There exists a nonsingular matrix $S^{[k]}(\hat{\mathbf{x}})$ whose entries, as well as those of its inverse, are polynomial functions of $\hat{\mathbf{x}}$, with the respective block structures
\begin{equation}\label{LemA1-R1}
  S^{[k]}(\hat{\mathbf{x}})=
  \left[\begin{array}{ccc}
          1 &  & \\
          -\hat{\mathbf{x}} & I_4 & \\
          * & * & *
        \end{array}\right],\quad
  \left(S^{[k]}(\hat{\mathbf{x}})\right)^{-1}=
  \left[\begin{array}{ccc}
          1 &  & \\
          \hat{\mathbf{x}} & I_4 & \\
          * & * & *
        \end{array}\right],
\end{equation}
such that, under the similarity transformation induced by $S^{[k]}(\hat{\mathbf{x}})$, $A_1^{[k]}(\hat{\mathbf{x}})$ is transformed to $A_1^{[k]}(\mathbf{0})$, which is block diagonal with the structure
\begin{equation}\label{LemA1-R2}
  A_1^{[k]}(\mathbf{0})=\left(S^{[k]}(\hat{\mathbf{x}})\right)^{-1}A_1^{[k]}(\hat{\mathbf{x}})S^ {[k]}(\hat{\mathbf{x}})=\operatorname{diag}\{0,A_1,*\}.
\end{equation}
\end{Lemma}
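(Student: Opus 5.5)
We will prove the statement by interpreting $A_1^{[k]}(\hat{\mathbf{x}})$ as the matrix of a linear operator on polynomials, so that $S^{[k]}(\hat{\mathbf{x}})$ is simply a change of polynomial basis (a translation of the expansion point).

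\textbf{The operator.} Let $\mathcal{P}_k$ be the space of polynomials in $\mathbf{x}\in\mathbb{R}^4$ of degree at most $k$. Consider the Lie derivative along the linear field $\mathbf{x}\mapsto A_1\mathbf{x}$,
\[
\mathcal{L}p(\mathbf{x})=\nabla p(\mathbf{x})^\top A_1\mathbf{x}.
\]
It maps $\mathcal{P}_k$ into itself, because $A_1\mathbf{x}$ is linear and so differentiation along it does not raise the degree. The $O(1/\varepsilon)$ part of (\ref{Assem01}) is exactly $\frac{1}{\varepsilon}\mathcal{L}(\mathbf{x}-\hat{\mathbf{x}})^\alpha$, written in the basis $P_{\mathbf{x}}^{[k,\hat{\mathbf{x}}]}$. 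Indeed, the first line of (\ref{ParDiff02}) with only the $A_1$ term retained is this derivative, and the identity $x_l=(x_l-\hat x_l)+\hat x_l$ produces both the shifted and the $\hat{x}_l$-weighted terms. Hence $A_1^{[k]}(\hat{\mathbf{x}})$ is the matrix of $\mathcal{L}$ with respect to the basis $\{(\mathbf{x}-\hat{\mathbf{x}})^\alpha\}$, and in particular $A_1^{[k]}(\mathbf{0})$ is its matrix in the monomial basis $\{\mathbf{x}^\alpha\}$.

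\textbf{The change of basis.} Expanding by the binomial theorem,
\[
(\mathbf{x}-\hat{\mathbf{x}})^\alpha=\sum_{\beta\le\alpha}\binom{\alpha}{\beta}(-\hat{\mathbf{x}})^{\alpha-\beta}\mathbf{x}^\beta,
\qquad
\mathbf{x}^\alpha=\sum_{\beta\le\alpha}\binom{\alpha}{\beta}\hat{\mathbf{x}}^{\alpha-\beta}(\mathbf{x}-\hat{\mathbf{x}})^\beta .
\]
Define $S^{[k]}(\hat{\mathbf{x}})$ as the matrix expressing one basis in terms of the other; with the convention chosen appropriately, $S^{-1}A_1^{[k]}(\hat{\mathbf{x}})S=A_1^{[k]}(\mathbf{0})$, which is the standard change-of-basis formula for a linear operator. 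Both $S$ and $S^{-1}$ have polynomial entries in $\hat{\mathbf{x}}$, and $S^{-1}$ is obtained from $S$ by replacing $\hat{\mathbf{x}}$ with $-\hat{\mathbf{x}}$. Both matrices are block lower triangular with respect to degree, with identity diagonal blocks.

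\textbf{The block structure (\ref{LemA1-R1}).} Reading off the first two degree blocks: the constant $1$ is unchanged, and $x_l-\hat x_l=x_l\cdot 1-\hat x_l\cdot 1$ gives the column/row entries $-\hat{\mathbf{x}}$ and $I_4$ in $S$, with $+\hat{\mathbf{x}}$ in $S^{-1}$. The main care needed is to fix the row/column convention (i.e. whether $S$ or $S^\top$ is meant) so that it matches (\ref{LemA1-R1}) and (\ref{LLES01}) together. I expect this bookkeeping, rather than any analysis, to be the main obstacle.

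\textbf{The block-diagonal form (\ref{LemA1-R2}).} In the monomial basis, $\mathcal{L}$ preserves homogeneous degree: the derivative of a monomial of degree $j$ along a linear field is again homogeneous of degree $j$. Therefore $A_1^{[k]}(\mathbf{0})$ is block diagonal over the degree blocks $j=0,\dots,k$. On degree $0$ we have $\mathcal{L}1=0$, giving the block $0$. On degree $1$ we have $\mathcal{L}x_j=(A_1\mathbf{x})_j$, giving the block $A_1$. The higher-degree blocks are denoted by $*$.
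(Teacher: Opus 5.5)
The paper does not prove this lemma; it calls the argument technical and defers it to \cite{QDZ2025}, so there is no in-text proof to compare with. Your argument is correct and self-contained. The $O(1/\varepsilon)$ part of (\ref{Assem01}) involves no Taylor truncation, so $A_1^{[k]}(\hat{\mathbf{x}})$ is the matrix of the single operator $\mathcal{L}p=\nabla p^\top A_1\mathbf{x}$ on $\mathcal{P}_k$ in the translated basis $\{(\mathbf{x}-\hat{\mathbf{x}})^\alpha\}$. The similarity is then just a change of basis, $S^{[k]}$ and its inverse are explicit binomial matrices, and block-diagonality of $A_1^{[k]}(\mathbf{0})$ says that a linear vector field preserves homogeneous degree.

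This viewpoint also gives Lemma \ref{LELem} almost for free. $A_1$ is diagonalizable with eigenvalues $0,0,\pm\mathrm{i}$. In complex eigencoordinates $\mathbf{z}$ of $A_1$, each monomial $\mathbf{z}^\alpha$ with $|\alpha|\le k$ is an eigenfunction of $\mathcal{L}$ with eigenvalue in $\mathrm{i}\mathbb{Z}$. Lemma \ref{LemAA2} then follows because $S^{[k]}$ and its inverse are polynomial in $\hat{\mathbf{x}}$, hence bounded on bounded sets.

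The row/column convention you flagged but left open does need checking. Another choice would put $\pm\hat{\mathbf{x}}$ with the wrong sign, or in the first row rather than the first column, and contradict (\ref{LemA1-R1}). It resolves the right way, as follows.

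\textbf{Row convention.} (\ref{LLES01}) is a system for the vector $\mathbf{x}^{[k,\hat{\mathbf{x}}]}$, so row $\tau^{[k]}(\alpha)$ of $A_1^{[k]}(\hat{\mathbf{x}})$ holds the coordinates of $\mathcal{L}(\mathbf{x}-\hat{\mathbf{x}})^\alpha$. That is, $\mathcal{L}\mathbf{x}^{[k,\hat{\mathbf{x}}]}=A_1^{[k]}(\hat{\mathbf{x}})\mathbf{x}^{[k,\hat{\mathbf{x}}]}$ componentwise, as polynomial identities in $\mathbf{x}$.

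\textbf{Choice of $S^{[k]}$.} Give $S^{[k]}(\hat{\mathbf{x}})$ the entry $\binom{\alpha}{\beta}(-\hat{\mathbf{x}})^{\alpha-\beta}$ in position $(\tau^{[k]}(\alpha),\tau^{[k]}(\beta))$ for $\beta\le\alpha$, and zero otherwise. Then $\mathbf{x}^{[k,\hat{\mathbf{x}}]}=S^{[k]}(\hat{\mathbf{x}})\mathbf{x}^{[k,\mathbf{0}]}$. Since $S^{[k]}$ does not depend on $\mathbf{x}$,
\[
A_1^{[k]}(\hat{\mathbf{x}})S^{[k]}(\hat{\mathbf{x}})\mathbf{x}^{[k,\mathbf{0}]}=\mathcal{L}\bigl(S^{[k]}(\hat{\mathbf{x}})\mathbf{x}^{[k,\mathbf{0}]}\bigr)=S^{[k]}(\hat{\mathbf{x}})A_1^{[k]}(\mathbf{0})\mathbf{x}^{[k,\mathbf{0}]}.
\]
Linear independence of the monomials $\mathbf{x}^\beta$ then gives $A_1^{[k]}(\hat{\mathbf{x}})S^{[k]}=S^{[k]}A_1^{[k]}(\mathbf{0})$. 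State this linear-independence step explicitly: it is what turns the operator identity into the matrix identity.

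\textbf{Consistency.} This $S^{[k]}$ has $-\hat{\mathbf{x}}$ below the leading $1$ and $I_4$ as its degree-one diagonal block, exactly as in (\ref{LemA1-R1}). It also matches the later use $\tilde{\mathbf{x}}^{[k,\mathbf{0}]}=(S^{[k]}(\mathbf{x}_n))^{-1}\tilde{\mathbf{x}}^{[k,\mathbf{x}_n]}$. In the same row convention, the degree-one block of $A_1^{[k]}(\mathbf{0})$ is $A_1$ itself, not $A_1^\top$, because $\mathcal{L}x_j=\sum_l(A_1)_{jl}x_l$.
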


Lemma \ref{LemA1} shows that the structurally simpler matrix $A_1^{[k]}( \mathbf{0})$ preserves the spectral properties of $A_1^{[k]}(\hat{\mathbf{x}})$. Based on Lemma \ref{LemA1}, we can derive the following result.
\begin{Lemma}\label{LELem}
If $A_1$ is defined by (\ref{Afx0}) and $A_1^{[k]}(\hat{\mathbf{x}})$ is generated in (\ref{LLES01}), then $A_1^{[k]}(\hat{\mathbf{x}})$ is diagonalizable and all its eigenvalues have zero real part.
\end{Lemma}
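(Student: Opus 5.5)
By Lemma \ref{LemA1}, $A_1^{[k]}(\hat{\mathbf{x}})$ is similar to $A_1^{[k]}(\mathbf{0})$ for every $\hat{\mathbf{x}}$. Similarity preserves both diagonalizability and the spectrum, so it suffices to prove the statement for $A_1^{[k]}(\mathbf{0})$. The plan is to identify this matrix with a linear operator on polynomials and then diagonalize that operator explicitly.

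\textbf{Step 1: identification with a derivation.} Set $\hat{\mathbf{x}}=\mathbf{0}$ in (\ref{Assem01}). The $1/\varepsilon$ part then reduces to $\sum_{j,l}\alpha_j(A_1)_{jl}\mathbf{x}^{\alpha-e_j^4+e_l^4}$, because the terms involving $\hat{x}_3,\hat{x}_4$ vanish. This is exactly $(A_1\mathbf{x})\cdot\nabla(\mathbf{x}^\alpha)$. Let $\mathcal{P}_k$ be the space of polynomials of degree at most $k$ in $\mathbf{x}\in\mathbb{R}^4$, and define the derivation
\begin{equation*}
\mathcal{L}p:=(A_1\mathbf{x})\cdot\nabla p .
\end{equation*}
The rows of $A_1^{[k]}(\mathbf{0})$ are the coefficient vectors of $\mathcal{L}\mathbf{x}^\alpha$ in the monomial basis $\{\mathbf{x}^\beta\}_{\beta\in\mathbb{I}^{[k]}}$. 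Hence $A_1^{[k]}(\mathbf{0})$ is the transpose of the matrix of $\mathcal{L}$ in that basis. Since $\mathcal{L}$ maps homogeneous polynomials of degree $j$ to homogeneous polynomials of degree $j$, $\mathcal{L}$ is a well-defined endomorphism of $\mathcal{P}_k$. Transposition does not affect diagonalizability or eigenvalues, so it suffices to show that $\mathcal{L}$ on $\mathcal{P}_k\otimes\mathbb{C}$ is diagonalizable with purely imaginary eigenvalues.

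\textbf{Step 2: diagonalize $A_1$.} From (\ref{Afx0}), $A_1[\mathbf{p};\mathbf{v}]=[\mathbf{v};J\mathbf{v}]$.
\begin{itemize}
\item The kernel is $\{\mathbf{v}=\mathbf{0}\}$, which is two-dimensional. So the eigenvalue $0$ has geometric multiplicity $2$.
\item For $\lambda=\pm\mathrm{i}$, let $\mathbf{v}$ be an eigenvector of $J$ with $J\mathbf{v}=\lambda\mathbf{v}$. Then $[\mathbf{v}/\lambda;\mathbf{v}]$ is an eigenvector of $A_1$.
\end{itemize}
This gives four independent eigenvectors, so $A_1=V\Lambda V^{-1}$ with $\Lambda=\operatorname{diag}\{0,0,\mathrm{i},-\mathrm{i}\}$.

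\textbf{Step 3: eigenbasis of $\mathcal{L}$.} Introduce complex coordinates $\mathbf{z}=V^{-1}\mathbf{x}$. In these coordinates the vector field becomes $\dot z_j=\lambda_j z_j$, and the chain rule gives $\mathcal{L}(\mathbf{z}^\alpha)=(\sum_j\alpha_j\lambda_j)\,\mathbf{z}^\alpha$. The monomials $\mathbf{z}^\alpha$ with $|\alpha|\le k$ form a basis of $\mathcal{P}_k\otimes\mathbb{C}$, because an invertible linear change of variables preserves each homogeneous degree space and its dimension. Therefore $\mathcal{L}$ is diagonalizable, with eigenvalues $\mathrm{i}(\alpha_3-\alpha_4)\in\mathrm{i}\mathbb{Z}$, all of zero real part. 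Transferring back through the transpose and the similarity $S^{[k]}(\hat{\mathbf{x}})$ from Lemma \ref{LemA1} proves the claim.

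\textbf{Main obstacle.} The delicate step is Step 1: verifying that the row-wise assembly convention of (\ref{LLES01}) really produces (the transpose of) the matrix of $\mathcal{L}$, and that no lower-degree coupling survives at $\hat{\mathbf{x}}=\mathbf{0}$. I would check this directly from (\ref{Assem01}) by confirming that, at $\hat{\mathbf{x}}=\mathbf{0}$, every $1/\varepsilon$ term $\mathbf{x}^{\alpha-e_j^4+e_l^4}$ has length exactly $|\alpha|$.
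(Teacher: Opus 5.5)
Your proof is correct and follows the route the paper indicates. Like the paper, you use the similarity of Lemma~\ref{LemA1} to reduce to the block-diagonal matrix $A_1^{[k]}(\mathbf{0})$; the paper omits the degree-by-degree spectral analysis of that matrix and defers it to \cite{QDZ2025}, while your identification of $A_1^{[k]}(\mathbf{0})^\top$ with the matrix of the derivation $\mathcal{L}=(A_1\mathbf{x})\cdot\nabla$, together with the eigen-coordinates $\mathbf{z}=V^{-1}\mathbf{x}$, supplies it cleanly, giving eigenvalues $\mathrm{i}(\alpha_3-\alpha_4)$.

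As a bonus, the same identification at a general $\hat{\mathbf{x}}$ shows that $A_1^{[k]}(\hat{\mathbf{x}})^\top$ is the matrix of the same operator $\mathcal{L}$ in the shifted basis $\{(\mathbf{x}-\hat{\mathbf{x}})^\beta\}$. So you could bypass Lemma~\ref{LemA1} entirely, and that lemma itself follows, with $S^{[k]}(\hat{\mathbf{x}})$ the change-of-basis matrix.
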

This lemma directly yields the following boundedness property of the matrix exponential, which plays a fundamental role in the convergence analysis.
\begin{Lemma}\label{LemAA2}
Let $\hat{\mathbf{x}}\in\mathbb{R}^4$ be a reference point contained in a bounded domain independent of $\varepsilon$, and let $A_1^{[k]}(\hat{\mathbf{x}})$ be given by the local linear extension system (\ref{LLES01}). Then
\begin{equation*}
  \left\|\mathrm{e}^{\frac{1}{\varepsilon}A_1^{[k]}(\hat{\mathbf{x}})t}\right\|\leq C
\end{equation*}
holds uniformly for all $\varepsilon$ and $t$.
\end{Lemma}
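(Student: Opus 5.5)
By Lemma \ref{LELem}, $A_1^{[k]}(\hat{\mathbf{x}})$ is diagonalizable with purely imaginary spectrum. We can therefore write $A_1^{[k]}(\hat{\mathbf{x}})=V(\hat{\mathbf{x}})\Lambda(\hat{\mathbf{x}})V(\hat{\mathbf{x}})^{-1}$ with $\Lambda$ diagonal and $\operatorname{Re}\Lambda=0$. Then
\begin{equation*}
\mathrm{e}^{\frac{1}{\varepsilon}A_1^{[k]}(\hat{\mathbf{x}})t}=V\,\mathrm{e}^{\frac{t}{\varepsilon}\Lambda}\,V^{-1},\qquad \left\|\mathrm{e}^{\frac{t}{\varepsilon}\Lambda}\right\|=1,
\end{equation*}
so the norm is bounded by the condition number $\|V\|\|V^{-1}\|$ for all real $t$ and all $\varepsilon>0$. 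The whole task is thus to show that this condition number can be chosen bounded uniformly for $\hat{\mathbf{x}}$ in the given bounded domain. This uniformity is the real content of the statement.

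To get uniformity I would avoid diagonalizing $A_1^{[k]}(\hat{\mathbf{x}})$ directly and use Lemma \ref{LemA1} instead. It gives
\begin{equation*}
\mathrm{e}^{\frac{1}{\varepsilon}A_1^{[k]}(\hat{\mathbf{x}})t}=S^{[k]}(\hat{\mathbf{x}})\,\mathrm{e}^{\frac{1}{\varepsilon}A_1^{[k]}(\mathbf{0})t}\,\left(S^{[k]}(\hat{\mathbf{x}})\right)^{-1}.
\end{equation*}
The entries of $S^{[k]}(\hat{\mathbf{x}})$ and its inverse are polynomials in $\hat{\mathbf{x}}$, hence continuous, so their norms are bounded by a constant on the bounded domain. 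This constant depends only on $k$ and the domain, not on $\varepsilon$ or $t$. The matrix $A_1^{[k]}(\mathbf{0})$ is one fixed matrix, independent of $\hat{\mathbf{x}}$ and $\varepsilon$. Applying Lemma \ref{LELem} at $\hat{\mathbf{x}}=\mathbf{0}$, it is diagonalizable as $A_1^{[k]}(\mathbf{0})=V_0\Lambda_0V_0^{-1}$ with $\Lambda_0$ purely imaginary. Hence $\|\mathrm{e}^{\frac{t}{\varepsilon}A_1^{[k]}(\mathbf{0})}\|\le\|V_0\|\|V_0^{-1}\|$, a single fixed number. Multiplying the three bounds gives the claim with $C=\sup_{\hat{\mathbf{x}}}\|S^{[k]}(\hat{\mathbf{x}})\|\|(S^{[k]}(\hat{\mathbf{x}}))^{-1}\|\,\|V_0\|\|V_0^{-1}\|$.

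The main obstacle is exactly this uniformity in $\hat{\mathbf{x}}$. Diagonalizability at each point alone does not control the eigenvector conditioning as $\hat{\mathbf{x}}$ varies. Lemma \ref{LemA1} removes the difficulty by reducing every case to the single matrix $A_1^{[k]}(\mathbf{0})$, with the $\hat{\mathbf{x}}$-dependence carried entirely by polynomial, hence bounded, similarity matrices. The bound also holds for negative $t$, since the spectrum is purely imaginary.
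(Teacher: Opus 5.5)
Your proof is correct and follows essentially the route the paper intends. The paper only asserts that Lemma \ref{LELem} (diagonalizability with purely imaginary spectrum) directly yields the bound, leaving the uniformity in $\hat{\mathbf{x}}$ implicit in the bounded-domain hypothesis and the polynomial similarity of Lemma \ref{LemA1}. You make that step explicit by applying Lemma \ref{LELem} only to the fixed matrix $A_1^{[k]}(\mathbf{0})$ and transporting the bound through $S^{[k]}(\hat{\mathbf{x}})$ and its inverse, which gives a constant independent of $\hat{\mathbf{x}}$, $\varepsilon$, and $t$.
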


Finally, we introduce two auxiliary extension systems. For notational simplicity, let $\mathbf{x}_n(n=0,\cdots,N)$ denote the solution $\mathbf{x}(t_n/\varepsilon^\nu)$ of (\ref{P}). Consider the extension variable at $\mathbf{x}_n(n=0,\cdots,N-1)$, denoted by $\mathbf{x}^{[k,\mathbf{x}_n]}$, which satisfies the local linear extension system
\begin{equation}\label{LLES-analy}
\begin{aligned}
  &\frac{\mathrm{d}\mathbf{x}^{[k,\mathbf{x}_n]}}{\mathrm{d}t} =\frac{1}{\varepsilon} A_1^{[k]}(\mathbf{x}_n)\mathbf{x}^{[k,\mathbf{x}_n]} +\varepsilon A_0^{[k]}(\mathbf{x}_n)\mathbf{x}^{[k,\mathbf{x}_n]} +\varepsilon\mathbf{R}^{[k]}(\mathbf{x}^{[k,\mathbf{x}_n]}), \quad \frac{t_n}{\varepsilon^\nu}\leq t\leq \frac{t_{n+1}}{\varepsilon^\nu},\\
  &\mathbf{x}^{[k,\mathbf{x}_n]}(t_n/\varepsilon^\nu)=e_1^{D^{[k]}},
\end{aligned}
\end{equation}
where for brevity, we introduce $\mathbf{R}^{[k]}(\mathbf{x}^{[k,\mathbf{x}_n]}):=\mathbf{R}^{[k]}(\mathbf{x}^{[k,\mathbf{x}_n]};\mathbf{x}_n)$. According to the ordering $\tau^{[k]}$, we have $\Pi_{\mathbf{x}}\mathbf{x}^{[k,\mathbf{x}_n]}(t/\varepsilon^\nu)=\mathbf{x}(t/\varepsilon^\nu)-\mathbf{x}_n$. Specially, the solution of (\ref{P}) at $t_{n+1}/\varepsilon^\nu$ can be recovered by:
\begin{equation*}
  \mathbf{x} _{n+1}=\Pi_{\mathbf{x}}\mathbf{x}^{[k,\mathbf{x}_n]}(t_{n+1}/\varepsilon^\nu)+\mathbf{x}_n,\quad n=0,\cdots,N-1.
\end{equation*}
The truncated system for (\ref{LLES-analy}) is defined as follows:
\begin{equation}\label{LLES-analy-truncation}
\begin{aligned}
  &\frac{\mathrm{d}\tilde{\mathbf{x}}^{[k,\mathbf{x}_n]}}{\mathrm{d}t} =\frac{1}{\varepsilon}A_1^{[k]}(\mathbf{x}_n)\tilde{\mathbf{x}}^{[k,\mathbf{x}_n]} +\varepsilon A_0^{[k]}(\mathbf{x}_n)\tilde{\mathbf{x}}^{[k,\mathbf{x}_n]}, \quad \frac{t_n}{\varepsilon^\nu}\leq t\leq \frac{t_{n+1}}{\varepsilon^\nu},\\
  &\tilde{\mathbf{x}}^{[k,\mathbf{x}_n]}(t_n/\varepsilon^\nu)=e_1^{D^{[k]}}.
\end{aligned}
\end{equation}
Similar to (\ref{LLES-num-truncation}), (\ref{LLES-analy-truncation}) can be solved exactly. Further, we denote the projection of its solution onto the state space by $\tilde{\mathbf{x}}_{n+1}:=\Pi_{\mathbf{x}}\tilde{\mathbf{x}}^{[k,\mathbf{x}_n]}(t_{n+1}/\varepsilon^\nu)+\mathbf{x}_n$.

\subsection{\texorpdfstring{Improved uniform accuracy when $\nu=0$}{Improved uniform accuracy when nu=0}}\label{Sec3-2}
\begin{Theorem}\label{Thm-CPD2D-HomoB}
Assume that the electric field function $\mathbf{E}(\mathbf{y})$ has Lipschitz-continuous derivatives up to order $k$. Let $\mathbf{Y}_n$ and $\dot{\mathbf{Y}}_n$ be the numerical solutions obtained from (\ref{LLEEI01})-(\ref{LLEEI03}). Then the following numerical error estimate holds for $\nu=0$:
\begin{equation*}
\|\mathbf{y}(t_n)-\mathbf{Y}_n\|+\varepsilon\|\dot{\mathbf{y}}(t_n)-\dot{\mathbf{Y}}_n\| \leq \min\{C\varepsilon^{k+2},C\varepsilon \Delta t^{k+1}\},\quad n=0,\cdots,N.
\end{equation*}
\end{Theorem}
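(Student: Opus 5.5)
The proof is a standard local-error/stability splitting in the weighted norm $\|\mathbf{y}\|+\|\varepsilon\dot{\mathbf{y}}\|$, i.e. $\|\mathbf{x}\|$. We write
\[
\mathbf{x}_{n+1}-\mathbf{X}_{n+1}=(\mathbf{x}_{n+1}-\tilde{\mathbf{x}}_{n+1})+(\tilde{\mathbf{x}}_{n+1}-\mathbf{X}_{n+1}),
\]
and treat the two pieces as follows.
\begin{itemize}
\item The first piece is the local error, produced by dropping $\mathbf{R}^{[k]}$ in (\ref{LLES-analy}).
\item The second piece is the propagation of the previous error $\mathbf{x}_n-\mathbf{X}_n$ through the scheme, comparing (\ref{LLES-analy-truncation}) with (\ref{LLES-num-truncation}).
\end{itemize}
Since $\Delta t\le T$, a global error of order $\varepsilon\min\{\varepsilon^{k+1},\Delta t^{k+1}\}$ should follow once we establish two bounds:
\[
\|\mathbf{x}_{n+1}-\tilde{\mathbf{x}}_{n+1}\|\le C\varepsilon\,\Delta t\,\min\{\varepsilon,\Delta t\}^{k+1},
\qquad
\|\tilde{\mathbf{x}}_{n+1}-\mathbf{X}_{n+1}\|\le(1+C\varepsilon\Delta t)\,\|\mathbf{x}_n-\mathbf{X}_n\|.
\]
A discrete Grönwall argument over $N=T/\Delta t$ steps then finishes the proof, since the prefactor $\Delta t$ in the local error sums to $T$.

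\textbf{Local error.} Subtract (\ref{LLES-analy-truncation}) from (\ref{LLES-analy}) and apply variation of constants with $M_n=A_1^{[k]}(\mathbf{x}_n)/\varepsilon+\varepsilon A_0^{[k]}(\mathbf{x}_n)$. This gives
\[
\mathbf{x}^{[k,\mathbf{x}_n]}-\tilde{\mathbf{x}}^{[k,\mathbf{x}_n]}=\varepsilon\int_{t_n}^{t_{n+1}}\mathrm{e}^{M_n(t_{n+1}-s)}\mathbf{R}^{[k]}(s)\,\mathrm{d}s.
\]
By Lemma \ref{LemAA2}, together with a perturbation argument for the $\varepsilon A_0^{[k]}$ term, we have $\|\mathrm{e}^{M_n t}\|\le C\mathrm{e}^{C\varepsilon t}\le C$ on $[0,T]$. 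The boundedness of $\mathbf{x}_n$ comes from the first part of Lemma \ref{CPD2D-dxLemma}. For $\|\mathbf{R}^{[k]}\|$ we use (\ref{LLES02}): each component is a product $(\mathbf{x}-\mathbf{x}_n)^{\alpha-e_j^4}$ times a Taylor remainder $r^{k-|\alpha|+2}$. The Lipschitz $k$-th derivative bounds that remainder by $C\|\mathbf{y}-\mathbf{y}_n\|^{k-|\alpha|+1}$ for low orders; for the top order we only use bounded remainders of degree $\le k+1$. This gives
\[
|R_i|\le C\|\mathbf{x}(s)-\mathbf{x}_n\|^{|\alpha|-1}\,\|\mathbf{y}(s)-\mathbf{y}_n\|^{k-|\alpha|+1}.
\]
Lemma \ref{CPD2D-dxLemma} (case $\nu=0$) gives $\|\mathbf{x}(s)-\mathbf{x}_n\|\le C\min\{\varepsilon,\Delta t\}$, so $\|\mathbf{R}^{[k]}\|\le C\min\{\varepsilon,\Delta t\}^{k}$. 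That is one power short of what we need.

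\textbf{Main obstacle: recovering the missing power.} The naive bound yields only $\varepsilon\Delta t\min\{\varepsilon,\Delta t\}^{k}$, i.e. a global error of order $\varepsilon^{k+1}$, not $\varepsilon^{k+2}$. The extra factor must come from projection by $\Pi_{\mathbf{x}}$. Only rows $\tau^{[k]}(\alpha)$ with $|\alpha|=1$ feed directly into $\mathbf{x}$, and the remainder there is $r^{k+1}$, which is $O(\|\mathbf{y}-\mathbf{y}_n\|^{k+1})=O(\min\{\varepsilon,\Delta t\}^{k+1})$. Higher rows reach the $\mathbf{x}$-rows only through $\Pi_{\mathbf{x}}\mathrm{e}^{M_n t}$ restricted to those rows. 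Lemma \ref{LemA1} says $A_1^{[k]}$ is block-diagonalizable with the degree-$1$ block decoupled in the $S^{[k]}$ coordinates. Because $A_1^{[k]}$ maps each degree to degree $\le$ itself, the coupling from degree $\ge2$ rows into the $\mathbf{x}$-rows in $\mathrm{e}^{M_n t}$ comes only from the $\varepsilon A_0^{[k]}$ part (which raises degree) or from lower-triangular $\hat{\mathbf{x}}$-terms. I would check this triangular-in-degree structure carefully. If it holds, $\Pi_{\mathbf{x}}\mathrm{e}^{M_n t}$ applied to degree-$\ge2$ components is bounded by $C\min\{1,t/\varepsilon\}$, using $\mathrm{e}^{z}-1=z\varphi_1(z)$ as in (\ref{CPD2D-dxLemma-08}), up to $O(\varepsilon t)$. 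That supplies the extra $\min\{\varepsilon,\Delta t\}$ factor after integration and gives the desired local bound. If this is too delicate, the fallback is to expand one more Taylor order in the rows with $|\alpha|\ge2$.

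\textbf{Stability.} The two systems (\ref{LLES-analy-truncation}) and (\ref{LLES-num-truncation}) differ only in the reference point ($\mathbf{x}_n$ versus $\mathbf{X}_n$). We use $S^{[k]}$ from Lemma \ref{LemA1} to write $A_1^{[k]}(\hat{\mathbf{x}})=S^{[k]}(\hat{\mathbf{x}})A_1^{[k]}(\mathbf{0})(S^{[k]}(\hat{\mathbf{x}}))^{-1}$. Then
\[
\Pi_{\mathbf{x}}\mathrm{e}^{A_1^{[k]}(\hat{\mathbf{x}})t/\varepsilon}e_1+\hat{\mathbf{x}}=\mathrm{e}^{A_1t/\varepsilon}\hat{\mathbf{x}}
\]
exactly, which is the exact linear flow. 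Hence the $1/\varepsilon$ part contributes the isometry-like factor $\mathrm{e}^{A_1 t/\varepsilon}$, bounded by Lemma \ref{LemAA2}, acting on $\mathbf{x}_n-\mathbf{X}_n$. The $\varepsilon A_0^{[k]}$ part, which depends smoothly on $\hat{\mathbf{x}}$ through Taylor coefficients of $\mathbf{E}$, contributes a Lipschitz perturbation of size $C\varepsilon\Delta t\,\|\mathbf{x}_n-\mathbf{X}_n\|$ via Duhamel's formula. A priori boundedness of $\mathbf{X}_n$ is obtained inductively from the error bound itself. One subtlety remains: $\mathrm{e}^{A_1t/\varepsilon}$ is bounded but not contractive, so the Grönwall step should be done on the filtered error $\mathrm{e}^{-A_1 t_n/\varepsilon}(\mathbf{x}_n-\mathbf{X}_n)$. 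This yields the stated $\min\{C\varepsilon^{k+2},C\varepsilon\Delta t^{k+1}\}$.
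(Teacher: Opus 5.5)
The overall architecture matches the paper's. You isolate the local error from dropping $\mathbf{R}^{[k]}$ in (\ref{LLES-analy}). You conjugate by $S^{[k]}$, so that the $1/\varepsilon$ part propagates the previous error exactly by $\mathrm{e}^{A_1\Delta t/\varepsilon}$ and $A_0^{[k]}$ adds a $C\varepsilon\Delta t$ Lipschitz perturbation. A discrete Gr\"onwall step finishes. Your stability half is essentially the paper's.

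\textbf{The gap is in the local error, and it comes from an off-by-one in reading (\ref{LLES02}).} In row $\tau^{[k]}(\alpha)$ the system already keeps the Taylor terms of $\mathbf{E}$ up to degree $k-|\alpha|+1$. The remainder there is therefore $r^{k-|\alpha|+2}$, which by (\ref{ParDiff04}) is $O(\|\mathbf{y}(s)-\mathbf{y}_n\|^{k-|\alpha|+2})$ once derivatives of order $k-|\alpha|+2$ are bounded. That order is at most $k+1$ for every $|\alpha|\ge1$, and at most $k$ when $|\alpha|\ge2$. So the Lipschitz hypothesis on the $k$-th derivatives covers all rows, not only the top one.

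Multiplying by the prefactor $(\mathbf{x}-\mathbf{x}_n)^{\alpha-e_j^4}$ of degree $|\alpha|-1$, every component of $\mathbf{R}^{[k]}$ is $O(\|\mathbf{x}(s)-\mathbf{x}_n\|^{k+1})$. Lemma \ref{CPD2D-dxLemma} then gives $\|\mathbf{R}^{[k]}\|\le C\min\{\varepsilon,\Delta t\}^{k+1}$ directly; this is the paper's (\ref{Thm3-3-02}). Variation of constants and Gr\"onwall give $\|\mathcal{J}_1^{[k]}(t_{n+1})\|\le C\varepsilon\Delta t\min\{\varepsilon,\Delta t\}^{k+1}$ with no structural argument. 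There is no missing power. Your ``fallback'' of one more Taylor order is just this correct count.

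\textbf{As written, your rescue would not close the gap you thought you had.} Suppose $\Pi_{\mathbf{x}}\mathrm{e}^{M_n u}$ restricted to degree-$\ge2$ components is only $O(\min\{1,u/\varepsilon\})$. Then $\varepsilon\int_0^{\Delta t}\min\{1,u/\varepsilon\}\,\mathrm{d}u\cdot\min\{\varepsilon,\Delta t\}^{k}\approx\Delta t\min\{\varepsilon,\Delta t\}^{k+1}$. This is a factor $\varepsilon$ short of the required $\varepsilon\Delta t\min\{\varepsilon,\Delta t\}^{k+1}$. For $\Delta t\ge\varepsilon$, exactly the regime of the $\varepsilon^{k+2}$ bound, it is no better than your naive estimate.

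Only a stronger structural fact would work. By (\ref{Assem01}) the $1/\varepsilon$ terms send a degree-$|\alpha|$ monomial to degrees $|\alpha|$ and $|\alpha|-1$, so $A_1^{[k]}(\hat{\mathbf{x}})$ is block lower triangular in degree. Hence the degree-one rows of $\mathrm{e}^{A_1^{[k]}u/\varepsilon}$ vanish on degree-$\ge2$ columns, and the coupling through $\varepsilon A_0^{[k]}$ is $O(\varepsilon u)$. That suffices because $\varepsilon\Delta t\le C\min\{\varepsilon,\Delta t\}$. None of this is needed once the remainder is counted correctly.

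Two smaller points on stability. First, your intermediate inequality with factor $1+C\varepsilon\Delta t$ is false, since $\mathrm{e}^{A_1\Delta t/\varepsilon}$ is bounded but not contractive. Your filtered-error Gr\"onwall is the right repair, and it is equivalent to the paper's unrolled recurrence (\ref{Thm3-3-11}) combined with Lemma \ref{LemAA2}. Second, your inductive a priori bound on $\mathbf{X}_n$ is genuinely needed, which the paper leaves implicit. $S^{[k]}$ is polynomial, so its Lipschitz constant is only local. Lemma \ref{LemAA2} also requires reference points in a bounded set.
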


\begin{proof}
Applying the variation-of-constants formula to (\ref{LLES-analy}) and (\ref{LLES-analy-truncation}) yields, respectively,
\begin{equation}\label{Thm3-3-01}
	\begin{split}
  \mathbf{x}^{[k,\mathbf{x}_n]}(t)=&\mathrm{e}^{\frac{1}{\varepsilon}A_1^{[k]}(\mathbf{x}_n)(t-t_n)}e_1^{D^{[k]}} +\int_{t_n}^t\varepsilon\mathrm{e}^{\frac{1}{\varepsilon}A_1^{[k]}(\mathbf{x}_n)(t-s)}A_0^{[k]}(\mathbf{x}_n)\mathbf{x}^{[k,\mathbf{x}_n]}(s)\mathrm{d}s \\ +&\int_{t_n}^t\varepsilon\mathrm{e}^{\frac{1}{\varepsilon}A_1^{[k]}(\mathbf{x}_n)(t-s)}\mathbf{R}^{[k]}(\mathbf{x}^{[k,\mathbf{x}_n]}(s))\mathrm{d}s,
  \end{split}
\end{equation}
and
\begin{equation}\label{Lem3-6-01}
  \tilde{\mathbf{x}}^{[k,\mathbf{x}_n]}(t)=\mathrm{e}^{\frac{1}{\varepsilon}A_1^{[k]}(\mathbf{x}_n)(t-t_n)}e_1^{D^{[k]}} + \int_{t_n}^{t}\varepsilon\mathrm{e}^{\frac{1}{\varepsilon}A_1^{[k]}(\mathbf{x}_n)(t-s)}A_0^{[k]}(\mathbf{x}_n)\tilde{\mathbf{x}}^{[k,\mathbf{x}_n]}(s)\mathrm{d}s,
\end{equation}
Subtracting (\ref{Lem3-6-01}) from (\ref{Thm3-3-01}) gives
\begin{align}\label{Thm3-3-03}
  \mathcal{J}_1^{[k]}(t):=& \mathbf{x}^{[k,\mathbf{x}_n]}(t)-\tilde{\mathbf{x}}^{[k,\mathbf{x}_n]}(t)\notag\\ =&\int_{t_n}^{t}\varepsilon\mathrm{e}^{\frac{1}{\varepsilon}A_1^{[k]}(\mathbf{x}_n)(t-s)}A_0^{[k]}(\mathbf{x}_n) \left(\mathbf{x}^{[k,\mathbf{x}_n]}(s)-\tilde{\mathbf{x}}^{[k,\mathbf{x}_n]}(s)\right)\mathrm{d}s \notag\\
  &+\int_{t_n}^{t}\varepsilon\mathrm{e}^{\frac{1}{\varepsilon}A_1^{[k]}(\mathbf{x}_n)(t-s)} \mathbf{R}^{[k]}(\mathbf{x}^{[k,\mathbf{x}_n]}(s))\mathrm{d}s.
\end{align}
According to (\ref{CPD2D-dxLemma-R1}) and (\ref{ParDiff04}), $\mathbf{r}^j(j=2,\cdots,k+1)$ satisfies the following estimate
\begin{equation}\label{Thm3-3-04}
  \left\|\mathbf{r}^j\left(\mathbf{x}(s); \mathbf{x}_n\right)\right\|\leq \min\{C\varepsilon^j,C\Delta t^j\}.
\end{equation}
Using (\ref{CPD2D-dxLemma-R1}) again and noting that $\mathbf{x}=[\mathbf{y}^\top,\varepsilon\dot{\mathbf{y}}^\top]^\top$, we obtain an upper bound for the truncation term $\mathbf{R} ^{[k]}$ (see (\ref{LLES02})):
\begin{equation}\label{Thm3-3-02}
  \|\mathbf{R} ^{[k]}({\mathbf{x}^{[k,\mathbf{x}_n]}}(s))\|\leq \min\{C\varepsilon^{k+1},C\Delta t^{k+1}\}.
\end{equation}
Taking norms in (\ref{Thm3-3-03}) and substituting (\ref{Thm3-3-02}) leads to
\begin{equation*}
  \|\mathcal{J}_1^{[k]}(t_{n+1})\| \leq C \int_{t_n}^{t_{n+1}}\varepsilon\|\mathbf{x}^{[k,\mathbf{x}_n]}(s)-\tilde{\mathbf{x}}^{[k,\mathbf{x}_n]}(s)\| \mathrm{d}s + \varepsilon \Delta t\min\{C\varepsilon^{k+1},C\Delta t^{k+1}\}.
\end{equation*}
Finally, applying Grönwall's inequality in integral form yields the local error estimate
\begin{equation}\label{Thm3-3-05}
  \|\mathcal{J}_{1}^{[k]}(t_{n+1})\| \leq \min\{C\varepsilon^{k+2} \Delta t,C\varepsilon \Delta t^{k+2}\}.
\end{equation}

We next proceed to the global error estimate. Applying the nonsingular matrix given in Lemma \ref{LemA1} to perform the variable substitutions $\tilde{\mathbf{x}} ^{[k,\mathbf{0}]}=\left(S^{[k]}(\mathbf{x}_n)\right)^{-1}\tilde{\mathbf{x}}^{[k,\mathbf{x}_n]}$ and $\mathbf{X}^{[k,\mathbf{0}]}=\left(S^{[k]}(\mathbf{X}_n)\right)^{-1}\mathbf{X}^{[k,\mathbf{X}_n]}$ for (\ref{LLES-analy-truncation}) and (\ref{LLES-num-truncation}), respectively, we obtain
\begin{equation*}
  \frac{\mathrm{d}\tilde{\mathbf{x}}^{[k,\mathbf{0}]}}{\mathrm{d}t} =\frac{1}{\varepsilon}A_1^{[k]}(\mathbf{0})\tilde{\mathbf{x}}^{[k,\mathbf{0}]} +\varepsilon\bar{A}_0^{[k]}(\mathbf{x}_n)\tilde{\mathbf{x}}^{[k,\mathbf{0}]},\qquad
  \tilde{\mathbf{x}}^{[k,\mathbf{0}]}(t_n)=\left(S^{[k]}(\mathbf{x}_n)\right)^{-1}e_1^{D^{[k]}},
\end{equation*}
and
\begin{equation*}
  \frac{\mathrm{d}\mathbf{X}^{[k,\mathbf{0}]}}{\mathrm{d}t} =\frac{1}{\varepsilon} A_1^{[k]}(\mathbf{0}) \mathbf{X}^{[k,\mathbf{0}]} +\varepsilon\bar{A}_0^{[k]}(\mathbf{X}_n)\mathbf{X}^{[k,\mathbf{0}]}, \qquad
  \mathbf{X}^{[k,\mathbf{0}]}(t_n)=\left(S^{[k]}(\mathbf{X}_n)\right)^{-1}e_1^{D^{[k]}},
\end{equation*}
where $\bar{A}_0^{[k]}(\hat{\mathbf{x}})=\left(S^{[k]}(\hat{\mathbf{x}})\right)^{-1}A_0^{[k]}(\hat{\mathbf{x}})S^{[k]}(\hat{\mathbf{x}})$. Applying the variation-of-constants formula and subtracting the two equations yields
\begin{equation}\label{Thm3-3-06}
  \tilde{\mathbf{x}}^{[k,\mathbf{0}]}(t)-\mathbf{X}^{[k,\mathbf{0}]}(t) =\mathrm{e}^{\frac{1}{\varepsilon}A_1^{[k]}(\mathbf{0})(t-t_n)}\left(\left(S^{[k]}(\mathbf{x}_n)\right)^{-1}-\left(S^{[k]}(\mathbf{X}_n)\right)^{-1}\right)e_1^{D^{[k]}} +\mathcal{J}_{2}^{[k]}(t),
\end{equation}
where
\begin{align}\label{Thm3-3-07}
  \mathcal{J}_{2}^{[k]}(t)=&\int_{t_n}^{t}\varepsilon\mathrm{e}^{\frac{1}{\varepsilon}A_1^{[k]}(\mathbf{0})(t-s)} \left(\bar{A}_0^{[k]}(\mathbf{x}_n)\tilde{\mathbf{x}}^{[k,\mathbf{0}]}(s) -\bar{A}_0^{[k]}(\mathbf{X}_n)\mathbf{X}^{[k,\mathbf{0}]}(s)\right)\mathrm{d}s \notag\\
  =&\int_{t_n}^t\varepsilon\mathrm{e}^{\frac{1}{\varepsilon}A_1^{[k]}(\mathbf{0})(t-s)} \left(\bar{A}_0^{[k]}(\mathbf{x}_n)-\bar{A}_0^{[k]}(\mathbf{X}_n)\right)\tilde{\mathbf{x}}^{[k,\mathbf{0}]}(s) \mathrm{d}s \notag\\
  &+\int_{t_n}^t\varepsilon\mathrm{e}^{\frac{1}{\varepsilon}A_1^{[k]}(\mathbf{0})(t-s)} \bar{A}_0^{[k]}(\mathbf{X}_n)(\tilde{\mathbf{x}}^{[k,\mathbf{0}]}(s)-\mathbf{X}^{[k,\mathbf{0}]}(s))\mathrm{d}s.
\end{align}
We assert the Lipschitz continuity of $\bar{A}_0^{[k]}(\cdot)$. Indeed, Lemma \ref{LemA1} directly implies that $S^{[k]}(\cdot)$ and its inverse are Lipschitz, while the Lipschitz property of $A_0^{[k]}(\cdot)$ follows from its definition in terms of the partial derivatives of $\mathbf{E}$. Consequently, from (\ref{Thm3-3-06}) and (\ref{Thm3-3-07}) we have
\begin{equation*}
  \|\tilde{\mathbf{x}}^{[k,\mathbf{0}]}(t)-\mathbf{X}^{[k,\mathbf{0}]}(t)\|\leq C\|\mathbf{x}_n-\mathbf{X}_n\| + C\varepsilon\int_{t_n}^t \|\tilde{\mathbf{x}}^{[k,\mathbf{0}]}(s)-\mathbf{X}^{[k,\mathbf{0}]}(s)\|\mathrm{d}s.
\end{equation*}
Applying Grönwall's inequality in integral form gives the estimate
\begin{equation}\label{Thm3-3-08}
  \|\tilde{\mathbf{x}}^{[k,\mathbf{0}]}(t)-\mathbf{X}^{[k,\mathbf{0}]}(t)\|\leq C\mathrm{e}^{C\varepsilon(t-t_n)}\|\mathbf{x}_n-\mathbf{X}_n\|,\quad t_n\leq t\leq t_{n+1}.
\end{equation}
Using (\ref{Thm3-3-08}) to bound the second term in (\ref{Thm3-3-07}) yields an estimate for $\mathcal{J}_{2}^{[k]}(t_{n+1})$:
\begin{equation}\label{Thm3-3-09}
  \|\mathcal{J}_{2}^{[k]}(t_{n+1})\|\leq C\varepsilon \Delta t\|\mathbf{x}_n-\mathbf{X}_n\|.
\end{equation}
Note that Definition \ref{LLEV} implies that when $\hat{\mathbf{x}}=\mathbf{0}$, applying $\Pi_{\mathbf{x}}$ to the extension variable recovers the original variable; i.e., $\tilde{\mathbf{x}}_{n+1}=\Pi_{\mathbf{x}}\tilde{\mathbf{x}}^{[k,\mathbf{0}]}(t_{n+1})$ and $\mathbf{X}_{n+1}=\Pi_{\mathbf{x}}\mathbf{X}^{[k,\mathbf{0}]}(t_{n+1})$. Projecting both sides of (\ref{Thm3-3-06}) and invoking the similarity relation, along with the diagonal structure of $A_1^{[k]}(\mathbf{0})$ specified in (\ref{LemA1-R2}), we obtain by direct computation the equality
\begin{equation}\label{Thm3-3-10}
  \tilde{\mathbf{x}}_{n+1}-\mathbf{X}_{n+1}=\mathrm{e}^{\frac{1}{\varepsilon}A_1\Delta t}(\mathbf{x}_n-\mathbf{X}_n)+\Pi_{\mathbf{x}}\mathcal{J}_{2}^{[k]}(t_{n+1}).
\end{equation}
Then, projecting (\ref{Thm3-3-03}) and adding it to (\ref{Thm3-3-10}) gives the recurrence relation
\begin{equation*}
  \mathbf{x}_{n+1}-\mathbf{X}_{n+1}=\mathrm{e}^{\frac{1}{\varepsilon}A_1\Delta t}(\mathbf{x}_n-\mathbf{X}_n) +\Pi_{\mathbf{x}}\left(\mathcal{J}_{1}^{[k]}(t_{n+1})+\mathcal{J}_{2}^{[k]}(t_{n+1})\right).
\end{equation*}
Using the initial condition $\mathbf{x}_0=\mathbf{X}_0$ and solving this recurrence leads to
\begin{equation}\label{Thm3-3-11}
  \mathbf{x}_n-\mathbf{X}_n=\sum_{j=1}^{n}\mathrm{e}^{\frac{n-j}{\varepsilon}A_1\Delta t}\Pi_{\mathbf{x}}\left(\mathcal{J}_{1}^{[k]}(t_j)+\mathcal{J}_{2}^{[k]}(t_j)\right).
\end{equation}
Substituting the bounds on $\mathcal{J}_{i}^{[k]} (i=1,2)$ from (\ref{Thm3-3-05}) and (\ref{Thm3-3-09}) into (\ref{Thm3-3-11}) and applying Lemma \ref{LemAA2} yields the inequality
\begin{equation*}
  \|\mathbf{x}_n-\mathbf{X}_n\|\leq \sum_{j=1}^{n}\left(C\varepsilon \Delta t\|\mathbf{x}_{j-1}-\mathbf{X}_{j-1}\|+\min\{C\varepsilon^{k+2} \Delta t,C\varepsilon \Delta t^{k+2}\}\right).
\end{equation*}
Finally, by the discrete Grönwall's inequality, we arrive at the desired error estimate.
\end{proof}

From the viewpoint that different step sizes lead to distinct convergence behaviors of the numerical scheme, the error estimate in Theorem \ref{Thm-CPD2D-HomoB} can be decomposed into two sub-results according to the relation between $h$ and $\varepsilon$:
\begin{equation}\label{Thm-CPD2D-HomoB-R1}
  \|\mathbf{y}(t_n)-\mathbf{y}_n\|+\varepsilon\|\dot{\mathbf{y}}(t_n)-\dot{\mathbf{Y}}_n\| \leq
  \begin{cases}
      C\varepsilon \Delta t^{k+1},\quad h=\Delta t<c_1\varepsilon, \\
      C\varepsilon^{k+2},\quad h=\Delta t>c_1\varepsilon,
  \end{cases}
\end{equation}
where $c_1$ is a positive constant independent of both $\Delta t$ and $\varepsilon$. In other words, the theoretical error of the local linear extension exponential integrator displays a piecewise behavior for either fixed $\Delta t$ or fixed $\varepsilon$, with the transition occurring when $\Delta t$ and $\varepsilon$ are of comparable magnitude. The value of $c_1$ is generally associated with the cyclotron period of the charged particle, which is approximately $2\pi\varepsilon$ {(i.e., $c_1\approx 2\pi$). Here, we choose the empirical threshold $\tilde{c}_1=\frac{\pi}{2}$ in place of $c_1$, corresponding to four time nodes within one cyclotron period, since a sufficient number of time nodes within a single period is often required to resolve the highest frequency in the CPD and thereby attain $(k+1)$-th order convergence in $\Delta t$.}

\subsection{\texorpdfstring{Uniform accuracy when $\nu=1$}{Uniform accuracy when nu=1}}\label{Sec3-3}
We present a uniform convergence result of the proposed numerical scheme for the case $\nu=1$, where the long-term behavior of the charged particle is considered. The actual simulation time and step size are $T/\varepsilon$ and $h=\Delta t/\varepsilon$, respectively.
\begin{Theorem}\label{Thm-CPD2D-HomoB-LongTime}
Let $\nu=1$ and $\Delta t>c_2\varepsilon$, where $c_2$ is a positive constant independent of both $\varepsilon$ and $\Delta t$. Assume that $\mathbf{E}(\mathbf{y})$ has Lipschitz continuous derivatives up to order $k$. Then the following numerical error estimate holds:
\begin{equation*}
\|\mathbf{y}(t_n/\varepsilon)-\mathbf{Y}_n\|+\varepsilon\|\dot{\mathbf{y}}(t_n/\varepsilon)-\dot{\mathbf{Y}}_n\| \leq C\Delta t^{k+1},\quad n=0,\cdots,N.
\end{equation*}
\end{Theorem}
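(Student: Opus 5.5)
We follow the two-stage structure of the proof of Theorem \ref{Thm-CPD2D-HomoB}, now on the long-time scale. Each step $[t_n/\varepsilon,t_{n+1}/\varepsilon]$ has length $h=\Delta t/\varepsilon$, and the stiff part of the exponent becomes $A_1^{[k]}(\cdot)\Delta t/\varepsilon^2$. The error is split exactly as before into a local defect $\mathcal{J}_1^{[k]}=\mathbf{x}^{[k,\mathbf{x}_n]}-\tilde{\mathbf{x}}^{[k,\mathbf{x}_n]}$ and a propagation term $\mathcal{J}_2^{[k]}$.

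\textbf{Local defect.} Writing (\ref{Thm3-3-01}) and (\ref{Lem3-6-01}) in the original time variable $s\in[t_n/\varepsilon,t_{n+1}/\varepsilon]$, the difference satisfies
\begin{equation*}
\mathcal{J}_1^{[k]}(t)=\int_{t_n/\varepsilon}^{t}\varepsilon\,\mathrm{e}^{\frac{1}{\varepsilon}A_1^{[k]}(\mathbf{x}_n)(t-s)}\left(A_0^{[k]}(\mathbf{x}_n)\mathcal{J}_1^{[k]}(s)+\mathbf{R}^{[k]}(\mathbf{x}^{[k,\mathbf{x}_n]}(s))\right)\mathrm{d}s .
\end{equation*}
The remainder is bounded through (\ref{CPD2D-dxLemma-R2}). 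For $s\in[t_n/\varepsilon,t_{n+1}/\varepsilon]$ with $\varepsilon s-t_n>c_2\varepsilon$, Lemma \ref{CPD2D-dxLemma} gives $\|\mathbf{x}(s)-\mathbf{x}_n\|\le C(\varepsilon s-t_n)\le C\Delta t$. For the initial portion $\varepsilon s-t_n\le c_2\varepsilon$, the boundedness part of the proof of Lemma \ref{CPD2D-dxLemma}, together with the bounded velocity for $\nu=1$, gives $\|\mathbf{x}(s)-\mathbf{x}_n\|\le C\varepsilon\le C\Delta t/c_2$. Hence $\|\mathbf{x}(s)-\mathbf{x}_n\|\le C\Delta t$ on the whole step. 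By (\ref{ParDiff04}) and (\ref{LLES02}), each component of $\mathbf{R}^{[k]}$ is a product of a polynomial of degree $|\alpha|-1$ in $\mathbf{x}-\mathbf{x}_n$ and a remainder of order $k-|\alpha|+2$, so $\|\mathbf{R}^{[k]}\|\le C\Delta t^{k+1}$.

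Lemma \ref{LemAA2} keeps the exponential bounded. The integral over an interval of length $\Delta t/\varepsilon$, with prefactor $\varepsilon$, then contributes $C\Delta t\cdot\Delta t^{k+1}$. Grönwall's inequality applies with the integrated rate $\varepsilon\cdot\Delta t/\varepsilon=\Delta t$, which is bounded, so
\begin{equation*}
\|\mathcal{J}_1^{[k]}(t_{n+1}/\varepsilon)\|\le C\Delta t^{k+2}.
\end{equation*}

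\textbf{Propagation.} We repeat the similarity transformation of Lemma \ref{LemA1} and the computation (\ref{Thm3-3-06})–(\ref{Thm3-3-09}). The Grönwall factor becomes $\mathrm{e}^{C\varepsilon\cdot\Delta t/\varepsilon}=\mathrm{e}^{C\Delta t}$, which is again bounded. This gives
\begin{equation*}
\|\mathcal{J}_2^{[k]}(t_{n+1}/\varepsilon)\|\le C\Delta t\|\mathbf{x}_n-\mathbf{X}_n\|.
\end{equation*}
The exact projection identity (\ref{Thm3-3-10}) is unchanged, with propagator $\mathrm{e}^{A_1\Delta t/\varepsilon^2}$. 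Its powers are uniformly bounded, because $\mathrm{e}^{A_1\tau}$ acts as a rotation in the velocity block and contributes a bounded $J$-dependent term in the position block. This boundedness is the content of Lemma \ref{LemAA2} in the case $k=1$.

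\textbf{Global bound.} Solving the recurrence as in (\ref{Thm3-3-11}) gives
\begin{equation*}
\|\mathbf{x}_n-\mathbf{X}_n\|\le\sum_{j=1}^n\left(C\Delta t\|\mathbf{x}_{j-1}-\mathbf{X}_{j-1}\|+C\Delta t^{k+2}\right).
\end{equation*}
The discrete Grönwall inequality over $N=T/\Delta t$ steps then yields $C\Delta t^{k+1}$. This bounds $\|\mathbf{y}-\mathbf{Y}\|+\varepsilon\|\dot{\mathbf{y}}-\dot{\mathbf{Y}}\|$, as claimed.

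\textbf{Main obstacle.} The delicate step is the remainder bound. Lemma \ref{CPD2D-dxLemma} must control $\mathbf{x}(s)-\mathbf{x}_n$ by $\Delta t$ uniformly across the whole long step of length $\Delta t/\varepsilon$, including its initial sub-interval shorter than $c_2\varepsilon$ in rescaled time. This is exactly where the hypothesis $\Delta t>c_2\varepsilon$ is used: the $O(\varepsilon)$ cyclotron excursion is thereby absorbed into $O(\Delta t)$. A second check is that none of the constants (from the Grönwall factors or from the powers of the exponential) picks up a factor $\varepsilon^{-1}$. This holds because every $1/\varepsilon$ stiffness sits inside a bounded exponential, and the nonstiff rate $\varepsilon$ always multiplies a time length of at most $T/\varepsilon$.
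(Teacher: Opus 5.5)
Your proposal follows the paper's proof essentially step for step. Your local bound $\|\mathcal{J}_1^{[k]}(t_{n+1}/\varepsilon)\|\le C\Delta t^{k+2}$ is the one the global summation actually needs (the paper writes $C\Delta t^{k+1}$ there, which would only sum to $C\Delta t^{k}$), and you treat the start of each step, where (\ref{CPD2D-dxLemma-R2}) does not apply, more carefully than the paper, with one repair needed: bounded velocity alone only gives $\|\mathbf{y}(s)-\mathbf{y}(t_n/\varepsilon)\|\le Cc_2$, so the $O(\varepsilon)$ position bound should instead be read off from (\ref{CPD2D-dxLemma-09}), whose first term is $O(\varepsilon)$ and whose remaining terms are $O(\varepsilon s-t_n)$, giving $\|\mathbf{x}(s)-\mathbf{x}_n\|\le C(\varepsilon+\varepsilon s-t_n)\le C\Delta t$ on the whole step without any splitting.
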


\begin{proof}
The proof is analogous to that of Theorem \ref{Thm-CPD2D-HomoB}. We therefore only highlight the differences. First, the time integration interval in (\ref{Thm3-3-03}) is modified to $s\in[t_n/\varepsilon,t/\varepsilon](t_n<t\leq t_{n+1})$. Under the parameter condition $\Delta t>c_2\varepsilon$, by applying (\ref{CPD2D-dxLemma-R2}) and (\ref{ParDiff04}), the $j$-th order Taylor remainder satisfies $\big\|\mathbf{r}^j(\mathbf{x}(s); \mathbf{x}_n)\big\|\leq C\Delta t^j$. Then, from (\ref{LLES02}), it follows that $\big\|\mathbf{R}^{[k]}({\mathbf{x}^{[k,\mathbf{x}_n]}}(s))\big\|\leq C\Delta t^{k+1}$. Consequently, the integral containing the truncation term in (\ref{Thm3-3-03}) is re-estimated as
\begin{equation*}
  \left\|\int_{t_n/\varepsilon}^{t/\varepsilon}\varepsilon\mathrm{e}^{\frac{1}{\varepsilon}A_1^{[k]}(\mathbf{x}_n)\left(\frac{t}{\varepsilon}-s\right)} \mathbf{R}^{[k]}(\mathbf{x}^{[k,\mathbf{x}_n]}(s))\mathrm{d}s\right\|\leq C\Delta t^{k+2}.
\end{equation*}
Thus, the integral inequality for the local error becomes
\begin{equation*}
  \|\mathcal{J}_1^{[k]}(t_{n+1})\| \leq C \int_{t_n/\varepsilon}^{t_{n+1}/\varepsilon} \varepsilon\|\mathbf{x}^{[k,\mathbf{x}_n]}(s)-\tilde{\mathbf{x}}^{[k,\mathbf{x}_n]}(s)\| \mathrm{d}s + C\Delta t^{k+2}.
\end{equation*}
Applying Grönwall's inequality yields the new local truncation estimate $$\|\mathcal{J}_1^{[k]}(t_{n+1})\|\leq C\Delta t^{k+1}.$$

In the global error estimate, the only modification again concerns the integration interval, which changes the estimate for $\mathcal{J}_{2}^{[k]}(t_{n+1})$ into
\begin{equation*}
  \|\mathcal{J}_{2}^{[k]}(t_{n+1})\|\leq C\Delta t\|\mathbf{x}_n-\mathbf{X}_n\|.
\end{equation*}

The error equality (\ref{Thm3-3-11}) remains valid. Substituting the new estimates for $\mathcal{J}_{1}^{[k]}(t_{n+1})$ and $\mathcal{J}_{2}^{[k]}(t_{n+1})$ into (\ref{Thm3-3-11}) and then applying the Grönwall's inequality, we obtain the desired error estimate.
\end{proof}

For $\nu=1$, when $\Delta t$ approaches $c_2\varepsilon$, the step size $h=\Delta t/\varepsilon$ becomes $O(1)$. In this regime, the convergence result from Section \ref{Sec3-2} for $h>c_1\varepsilon$ overlaps with the present analysis. This implies that $c_2$ serves as a second threshold in long-time simulations when $h\approx O(1)$. At the end of this subsection, we estimate the value of this threshold.

When a large step size inversely proportional to $\varepsilon$ is used, its time scale substantially exceeds the cyclotron frequency, so that the cumulative drift effect over each cyclotron period can no longer be neglected. Consequently, the spatial scale characterized by the Larmor radius becomes the key physical quantity in determining this threshold. We adopt guiding-center theory for the analysis \cite{C2015}. Under the influence of a nonuniform electric field, the drift velocity of the guiding center of a charged particle governed by (\ref{CPD2D-homo}) is given by
\begin{equation}\label{GD-01}
  \mathbf{v}_\mathbf{E}=\varepsilon\left(1+\frac{r_L^2}{4}\Delta\right)J\mathbf{E}(\mathbf{y}),
\end{equation}
where $r_L$ denotes the Larmor radius. Since the CPD model (\ref{CPD2D-homo}) normalizes the mass, charge, and magnetic field strength of the charged particle, the Larmor radius is given by $r_L=\varepsilon\|\dot{\mathbf{y}}\|$. Substituting this expression into (\ref{GD-01}), we obtain the time required for the guiding-center drift to traverse one Larmor radius as
\begin{equation*}
  \frac{r_L}{\mathbf{v}_\mathbf{E}}=\frac{\|\dot{\mathbf{y}}\|}{\|\mathbf{E}(y)\|+O(\varepsilon^2)}.
\end{equation*}
Neglecting the higher-order term of $\varepsilon$ and taking the maximum over $t$, we define the threshold $c_2$ as
\begin{equation}\label{GD-02}
  c_2:=\max_{t\in[0,\frac{T}{\varepsilon}]}\frac{\|\dot{\mathbf{y}}(t)\|}{\|\mathbf{E}(y(t))\|}.
\end{equation}
If a reliable a priori upper bound on the particle's velocity is available, then (\ref{GD-02}) can serve as a predicted value for the transition point in the piecewise convergence behavior. From the perspective of numerical computation with fixed $\varepsilon$, the step size $h=\Delta t/\varepsilon>c_2$ indicates that the drift distance of the particle exceeds its Larmor radius in a single time step. The change in position $\mathbf{y}(t+h)-\mathbf{y}(t)$ tends to be proportional to $\Delta t$, thereby exhibiting an error order in terms of $\Delta t$.

Furthermore, if the electric field $\mathbf{E}(\mathbf{y})$ is conservative, i.e., if there exists a potential function $U(\mathbf{y})$ such that $\mathbf{E}(\mathbf{y})=-\nabla U(\mathbf{y})$, then the charged particle admits a conserved energy:
\begin{equation}\label{Energy}
  H(\mathbf{y},\dot{\mathbf{y}})=\frac{1}{2}\|\dot{\mathbf{y}}\|^2 +U(\mathbf{y})\equiv H(\mathbf{y}_0,\dot{\mathbf{y}}_0).
\end{equation}
Using this conservation relation, one obtains an a priori upper bound for the velocity magnitude:
\begin{equation*}
  \|\dot{\mathbf{y}}(t)\|^2\leq 2H(\mathbf{y}_0,\dot{\mathbf{y}}_0) -2\inf\limits_{\mathbf{y}}U(\mathbf{y}).
\end{equation*}
Substituting this bound into (\ref{GD-02}) yields a threshold suitable for a priori estimation:
\begin{equation}\label{GD-03}
  \tilde{c}_2\approx\frac{\left(2H(\mathbf{y}_0,\dot{\mathbf{y}}_0) -2\inf\limits_{\mathbf{y}}U(\mathbf{y})\right)^{\frac{1}{2}}}{\inf\limits_{\mathbf{y}}\|\nabla U(\mathbf{y})\|}.
\end{equation}
The threshold $\tilde{c}_2$ depends on the global minimum of the potential and electric field magnitudes over a region. Therefore, it is more suitable for cases where the electric field varies gently over a spatial scale of $O(1)$ around the particle.

\section{Numerical experiments}\label{Sec4}
\newcounter{NumEx}
In this section, we present several examples employing the newly proposed EI to solve the CPD equation (\ref{CPD2D-homo}). We denote the local linear extension exponential integrator using polynomials up to degree $k$ by LLEEI$(k+1)$. Since the matrices $A_1^{[k]}$ and $A_0^{[k]}$ are of moderate size, the matrix exponential in (\ref{LLEEI01}) is computed by the built-in MATLAB function \texttt{expm} in all these examples \cite{SW2009}.

\refstepcounter{NumEx}\label{Ex1}
\textbf{Example \ref{Ex1}.} We first verify that the unified framework of the LLEEI schemes can achieve arbitrarily high-order uniform convergence with respect to $\varepsilon$ as $k$ increases, and show that this convergence exhibits the piecewise behavior revealed in Section \ref{Sec3-2}. The electric field is generated by a uniformly charged wire located at $y_2=0,-1\leq y_1\leq 1$:
\begin{equation*}
  \mathbf{E}(\mathbf{y})=\frac{1}{\sqrt{(y_1-1)^2+y_2^2}}\begin{bmatrix} 1 \\ \frac{1-y_1}{y_2} \end{bmatrix} +\frac{1}{\sqrt{(y_1+1)^2+y_2^2}}\begin{bmatrix} -1 \\ \frac{y_1+1}{y_2}
  \end{bmatrix}.
\end{equation*}
The initial position and velocity of the charged particle are taken as $\mathbf{y}_0=\left[\frac{1}{2},\frac{3}{4}\right]^\top$ and $\dot{\mathbf{y}}_0=[-2,-2]^\top$, respectively. We consider the case $\nu=0$ and set the final time to $T=6$. Since no explicit exact solution is available, we compute a reference solution using the classical fourth-order Runge-Kutta method with a very small time step $0.1/2^{17}$. We denote by $err(\cdot)$ the global error of the observable physical quantity evaluated at the time nodes $\{t_n\}_{n=0}^N$.

We first examine the dependence of the errors on $\Delta t$ for various values of $\varepsilon$. We verify convergence orders up to five, i.e., $k=1,2,3,4$, and present the results in Figure \ref{fig1-1}. It can be observed that, for relatively large $\varepsilon$, the LLEEI$(k+1)$ scheme exhibits $O(\Delta t^{k+1})$ convergence. For small $\varepsilon$, the error in the simulation results remains bounded over the entire range of step sizes. This is consistent with the theoretical results stated in Theorem \ref{Thm-CPD2D-HomoB}. For intermediate values of $\varepsilon$, a clear piecewise behavior of the error curves is observed. Moreover, as $\varepsilon$ decreases, the transition point shifts to smaller values of $\Delta t$, in agreement with the prediction in (\ref{Thm-CPD2D-HomoB-R1}) that the transition point is located at $\Delta t=O(\varepsilon)$.

\begin{figure}[htbp]
  \centering
  \begin{subfigure}[b]{0.45\textwidth}
    \centering
    \includegraphics[width=\linewidth]{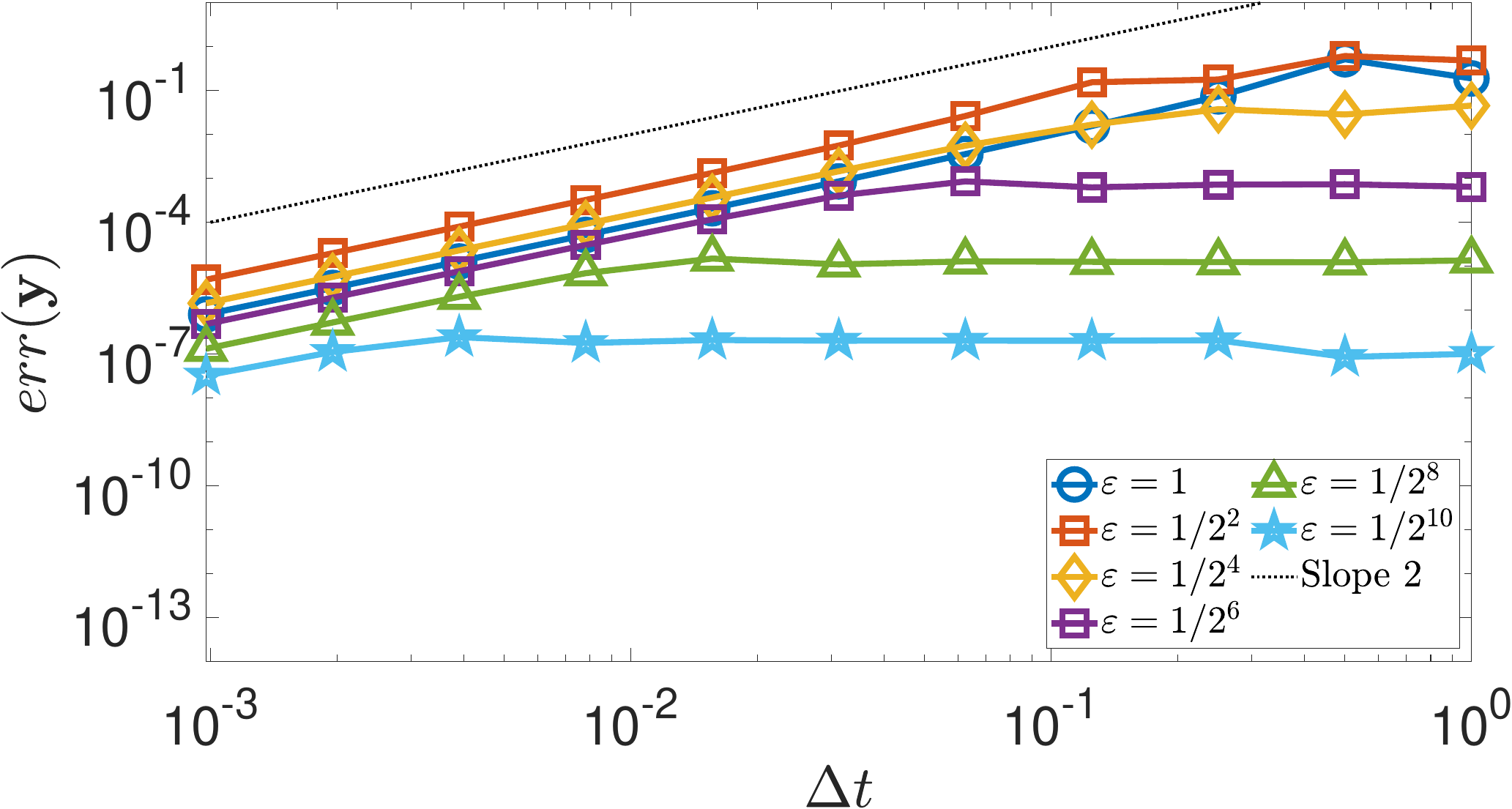}
  \end{subfigure}
  \begin{subfigure}[b]{0.45\textwidth}
    \centering
    \includegraphics[width=\linewidth]{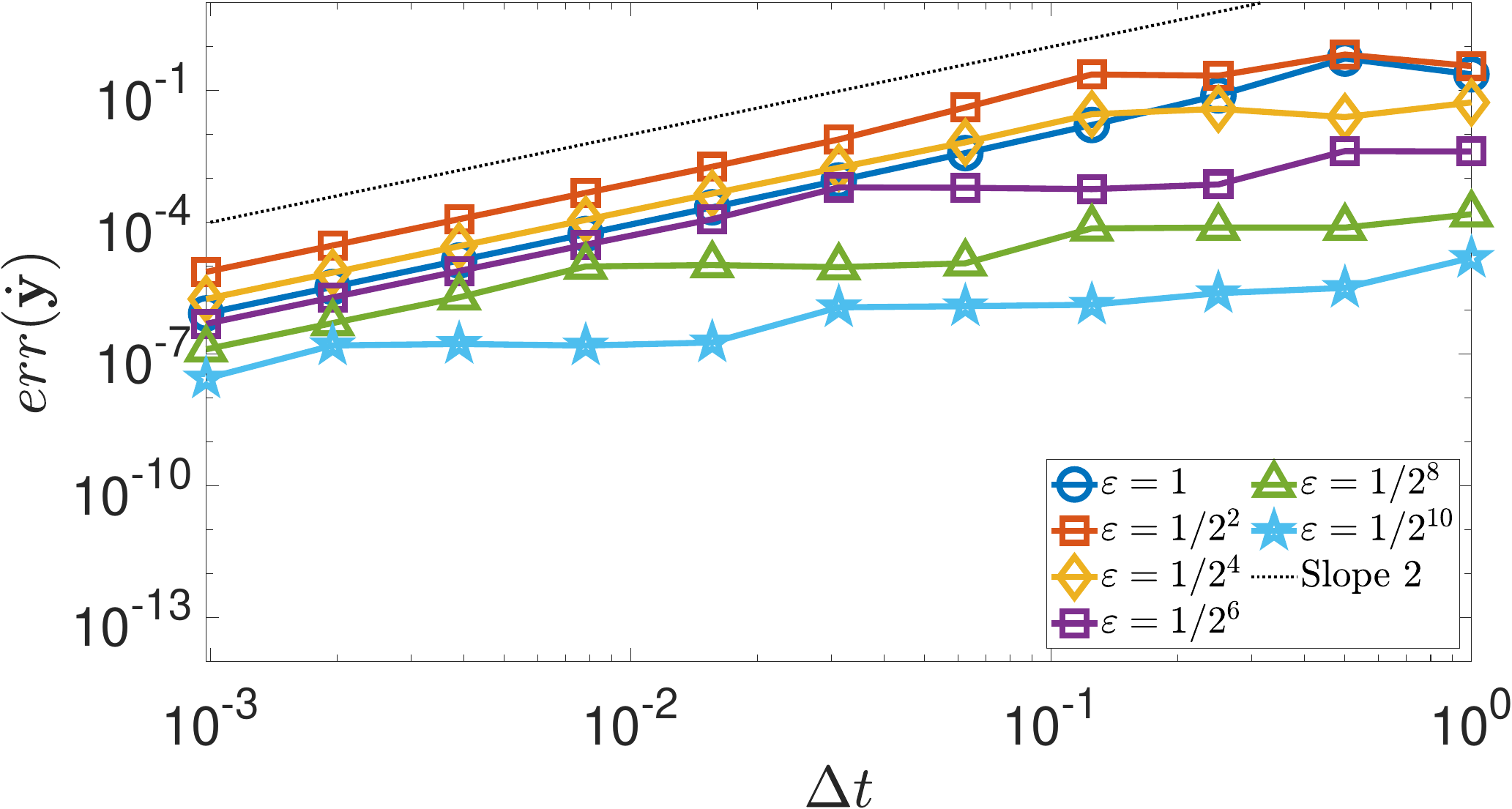}
  \end{subfigure}
  \vspace{1ex}
  \begin{subfigure}[b]{0.45\textwidth}
    \centering
    \includegraphics[width=\linewidth]{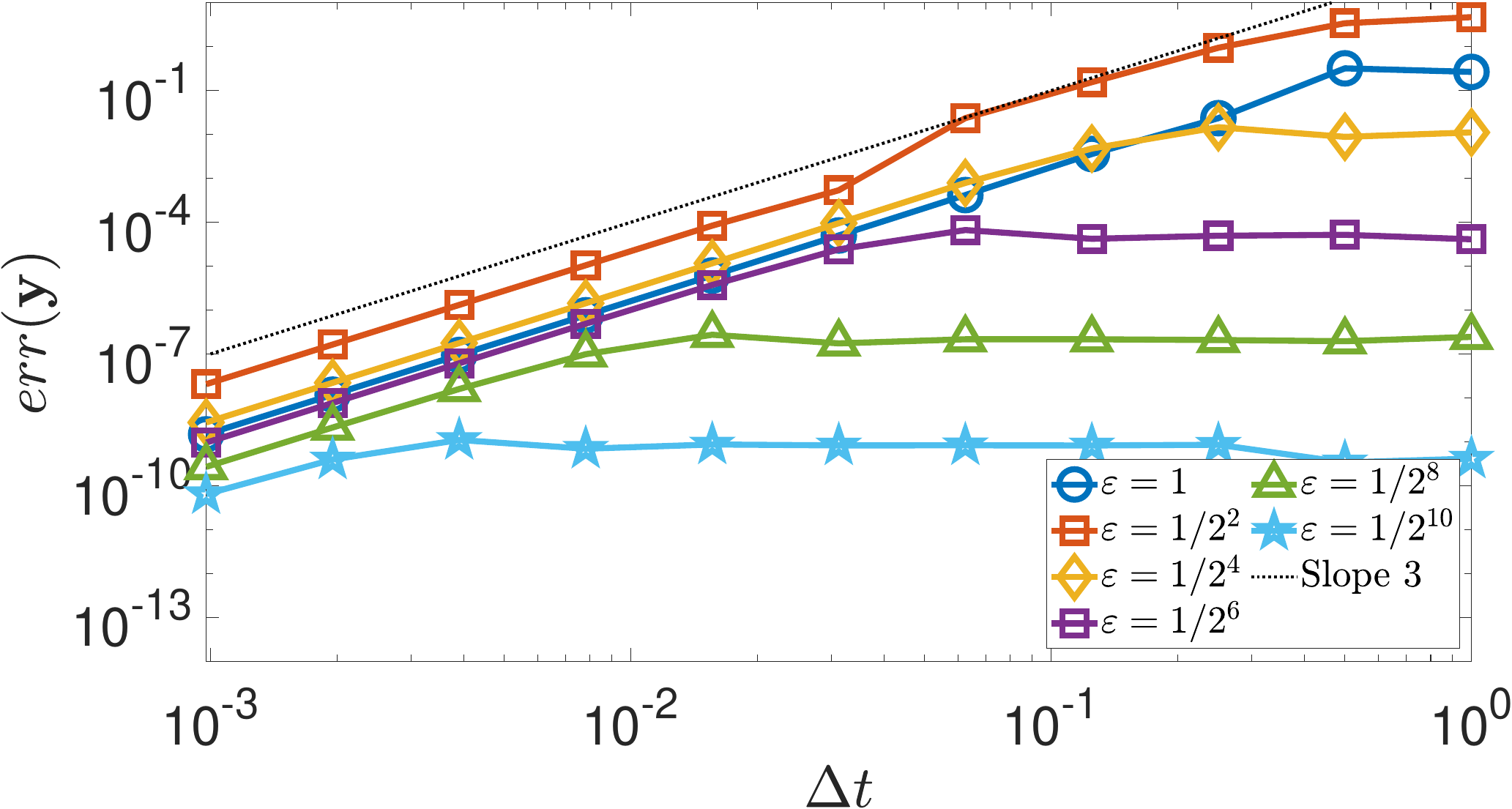}
  \end{subfigure}
  \begin{subfigure}[b]{0.45\textwidth}
    \centering
    \includegraphics[width=\linewidth]{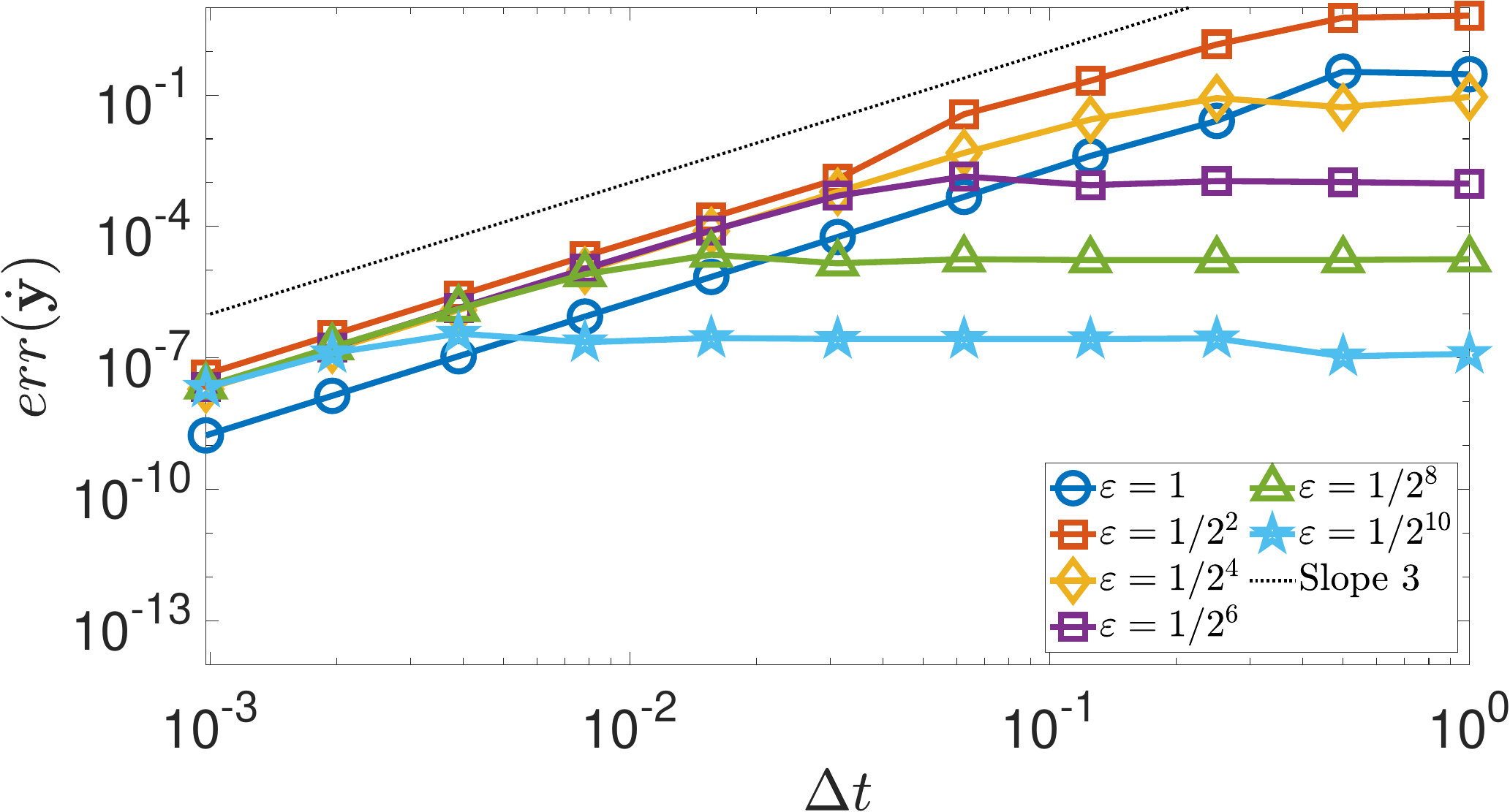}
  \end{subfigure}
  \vspace{1ex}
    \begin{subfigure}[b]{0.45\textwidth}
    \centering
    \includegraphics[width=\linewidth]{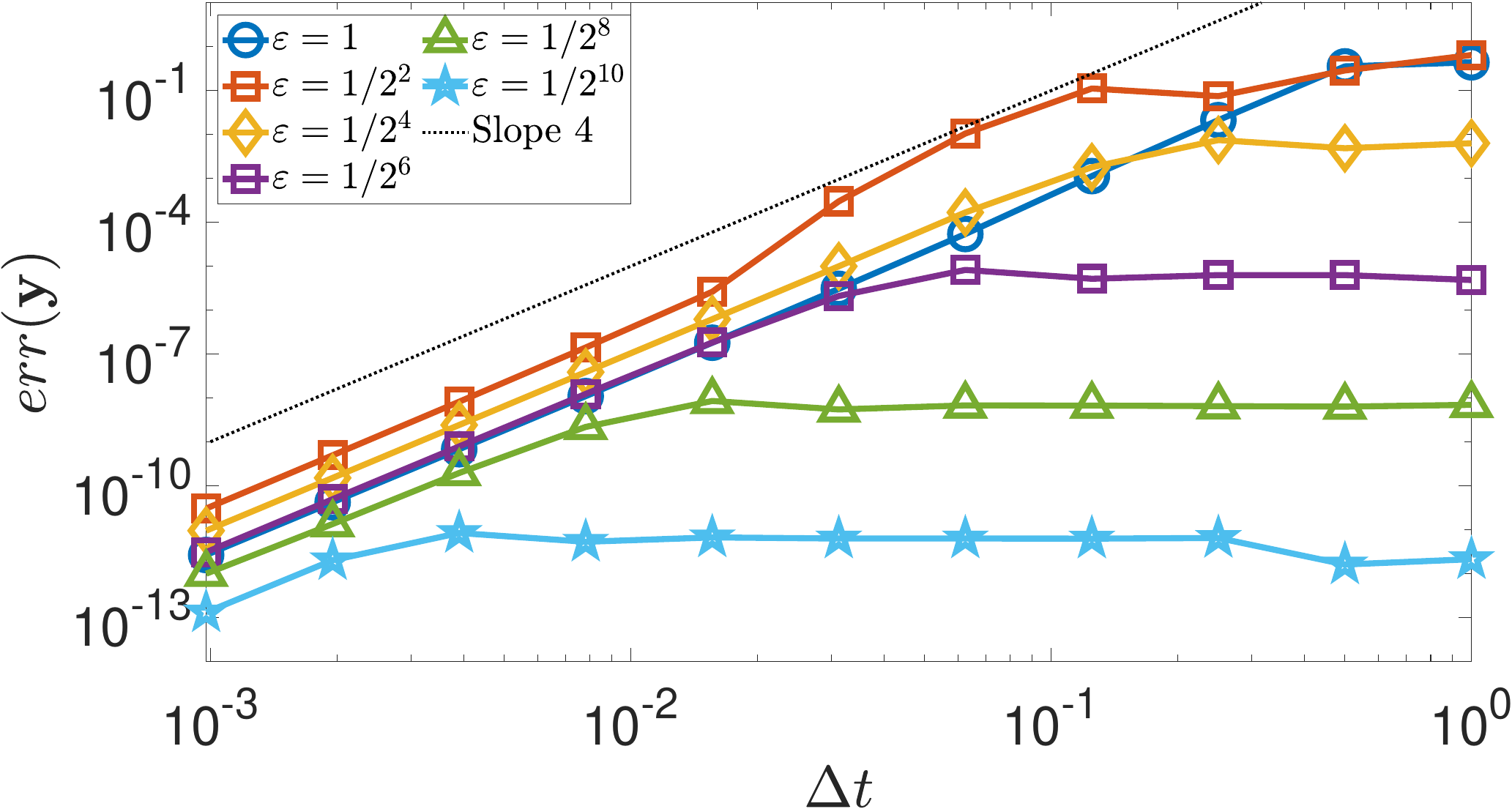}
  \end{subfigure}
  \begin{subfigure}[b]{0.45\textwidth}
    \centering
    \includegraphics[width=\linewidth]{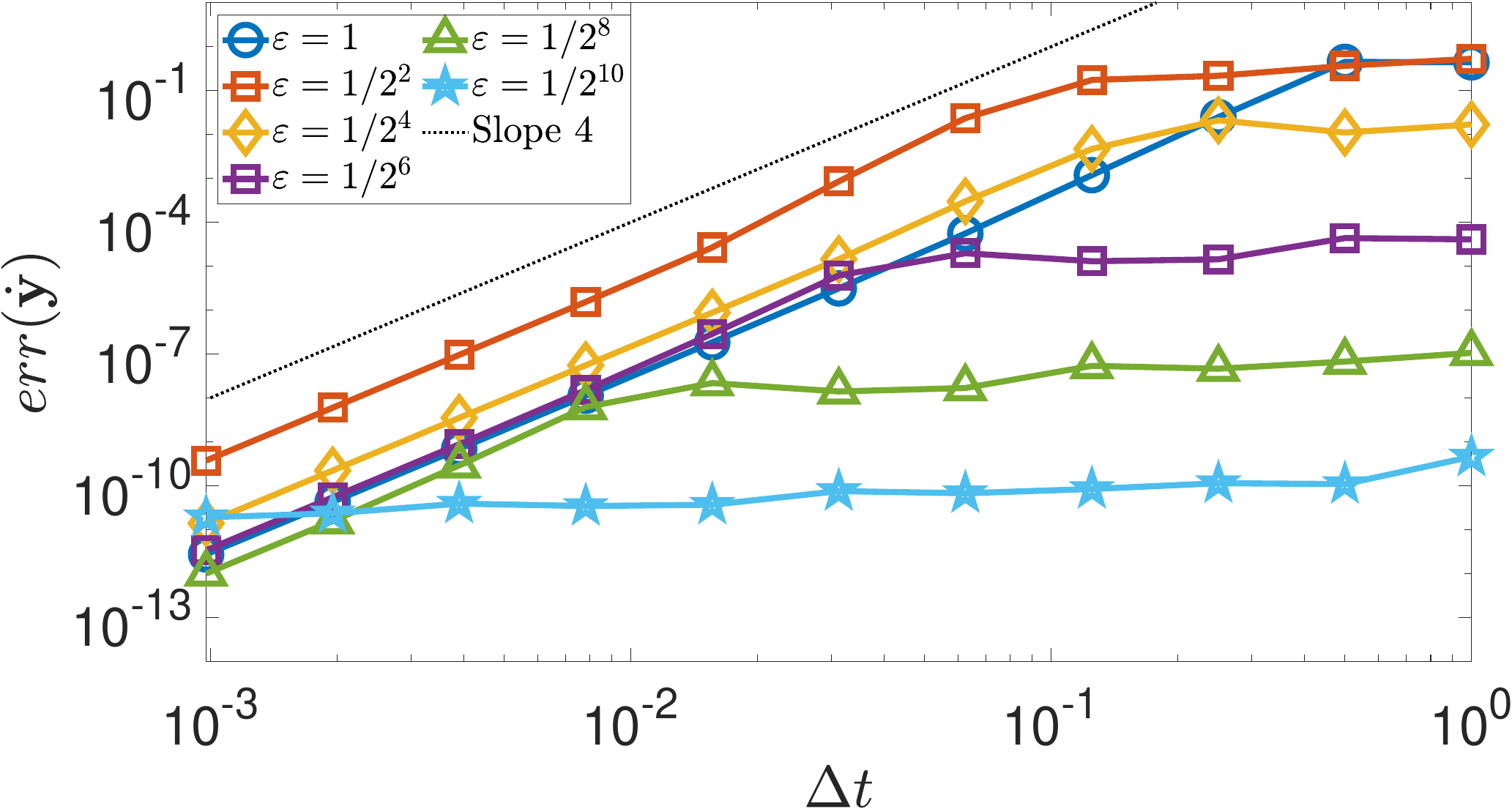}
  \end{subfigure}
  \vspace{1ex}
    \begin{subfigure}[b]{0.45\textwidth}
    \centering
    \includegraphics[width=\linewidth]{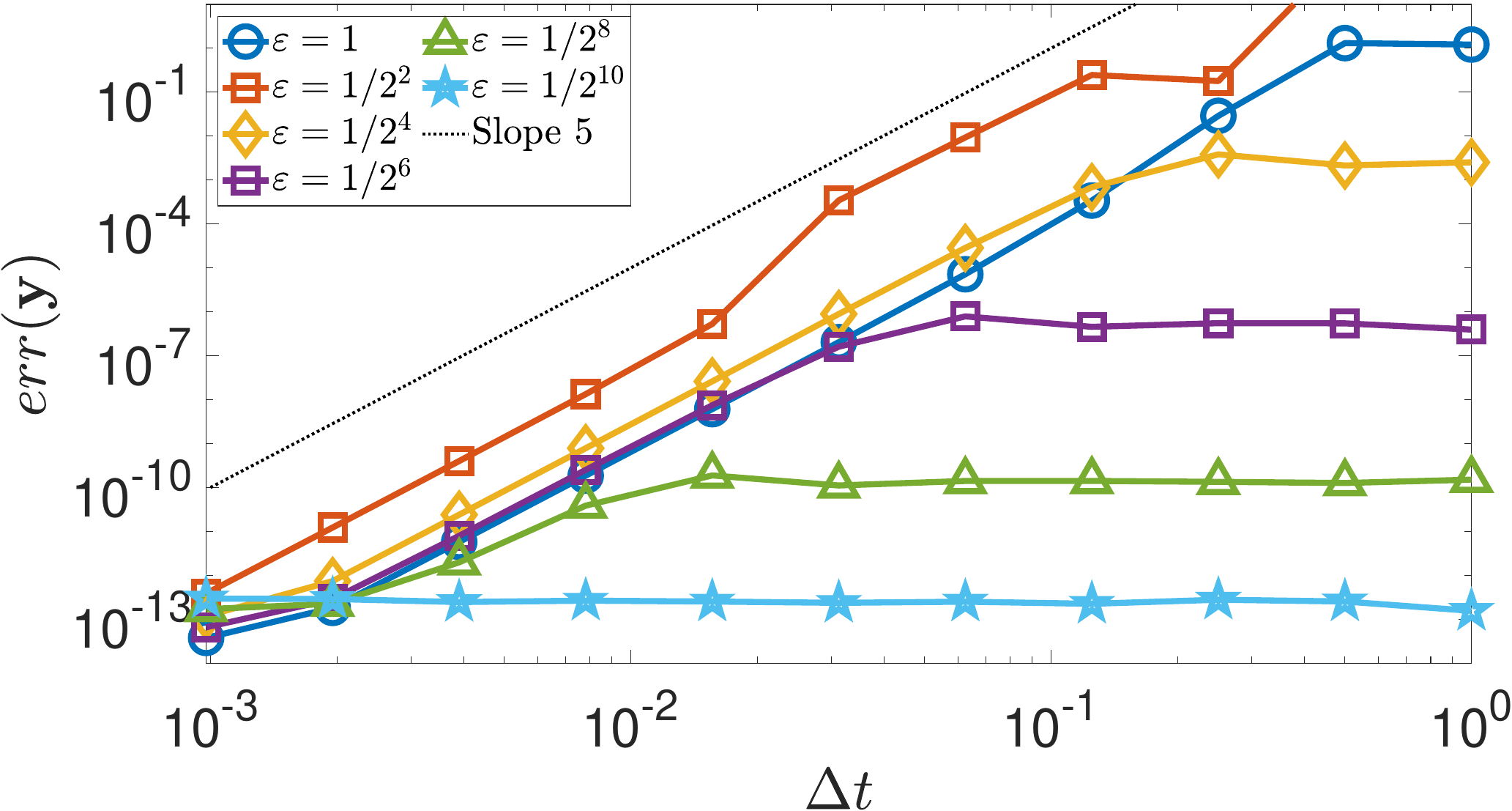}
  \end{subfigure}
  \begin{subfigure}[b]{0.45\textwidth}
    \centering
    \includegraphics[width=\linewidth]{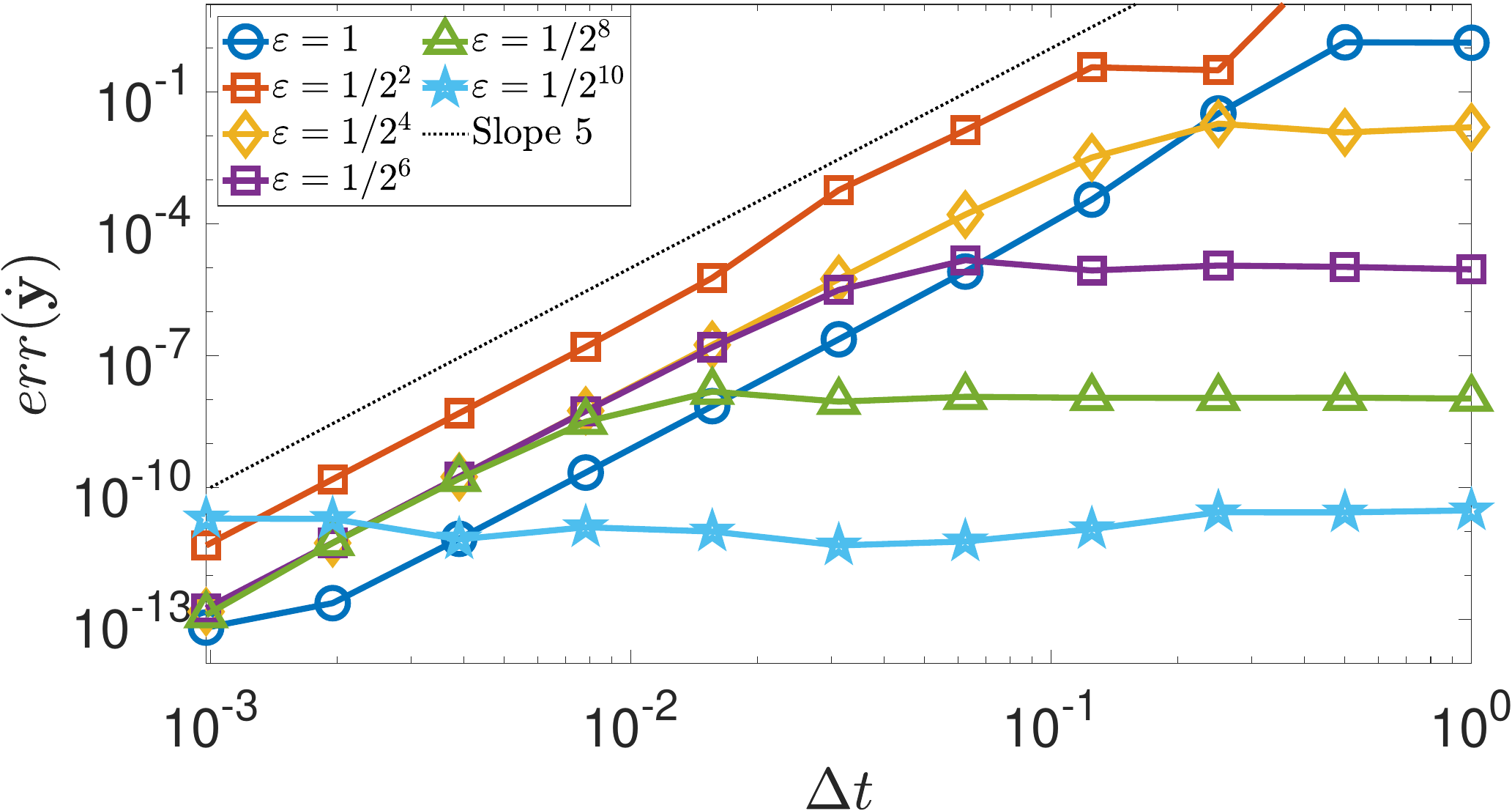}
  \end{subfigure}
  \caption{The left and right columns illustrate the dependence of the error in $\mathbf{y}$ and $\dot{\mathbf{y}}$ on $\Delta t$, respectively. From top to bottom, each row corresponds to $k=1,2,3,4$.}
  \label{fig1-1}
\end{figure}

Next, we fix several $\Delta t$ and examine the dependence of the error on $\varepsilon$ to verify its uniformity. The numerical results are presented in Figure \ref{fig1-2}. Once again, we observe piecewise convergence. For relatively small $\varepsilon$, the error in the particle position obtained by the LLEEI$(k+1)$ method exhibits convergence order $O(\varepsilon^{k+2})$, while the error in the velocity exhibits convergence order $O(\varepsilon^{k+1})$. For relatively large $\varepsilon$, the position error remains first-order convergent, whereas the velocity error remains uniformly bounded. For intermediate $\varepsilon$, the convergence is piecewise, consistent with the behavior shown in (\ref{Thm-CPD2D-HomoB-R1}).

\begin{figure}[htbp]
  \centering
  \begin{subfigure}[b]{0.45\textwidth}
    \centering
    \includegraphics[width=\linewidth]{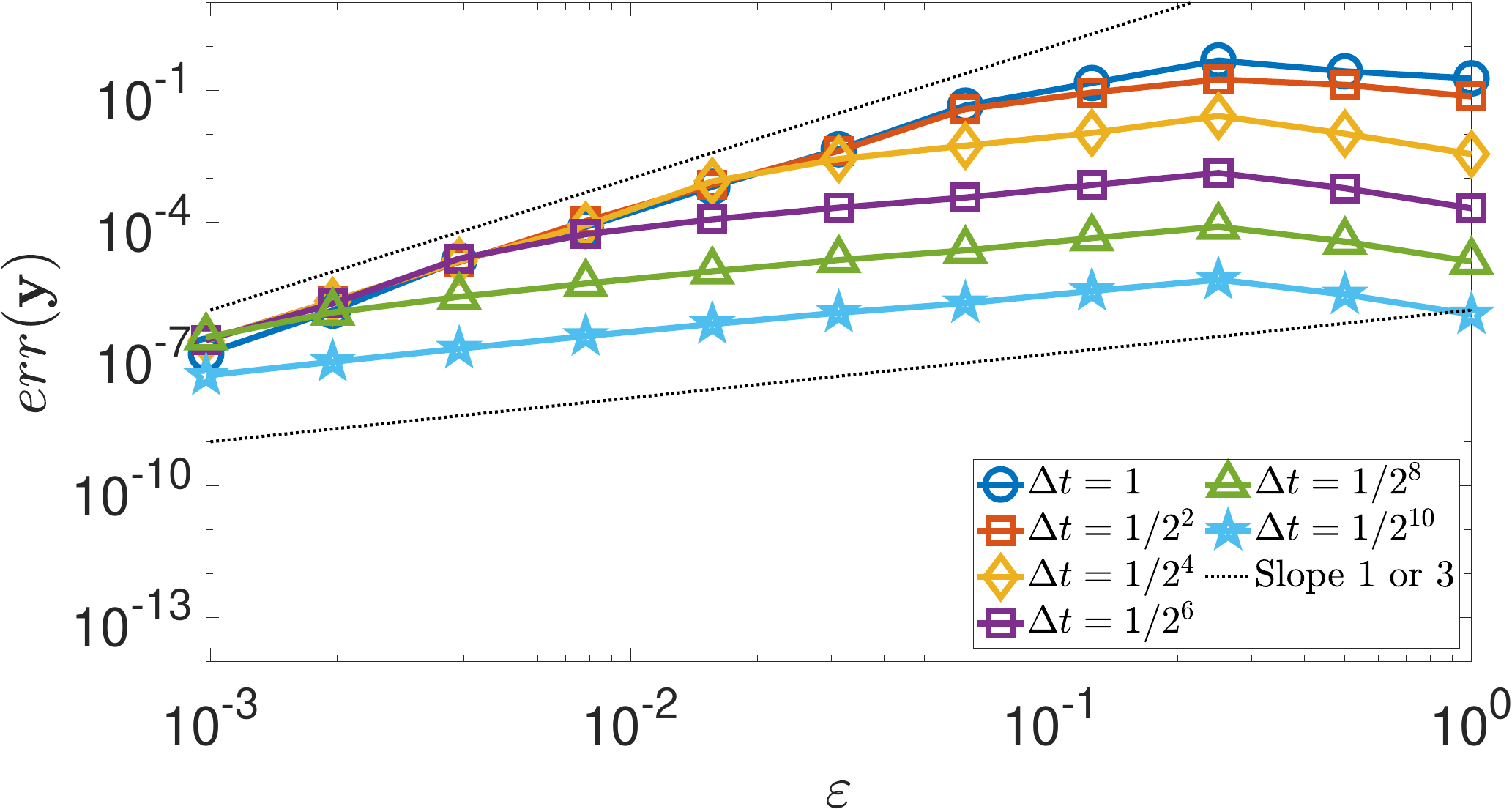}
  \end{subfigure}
  \begin{subfigure}[b]{0.45\textwidth}
    \centering
    \includegraphics[width=\linewidth]{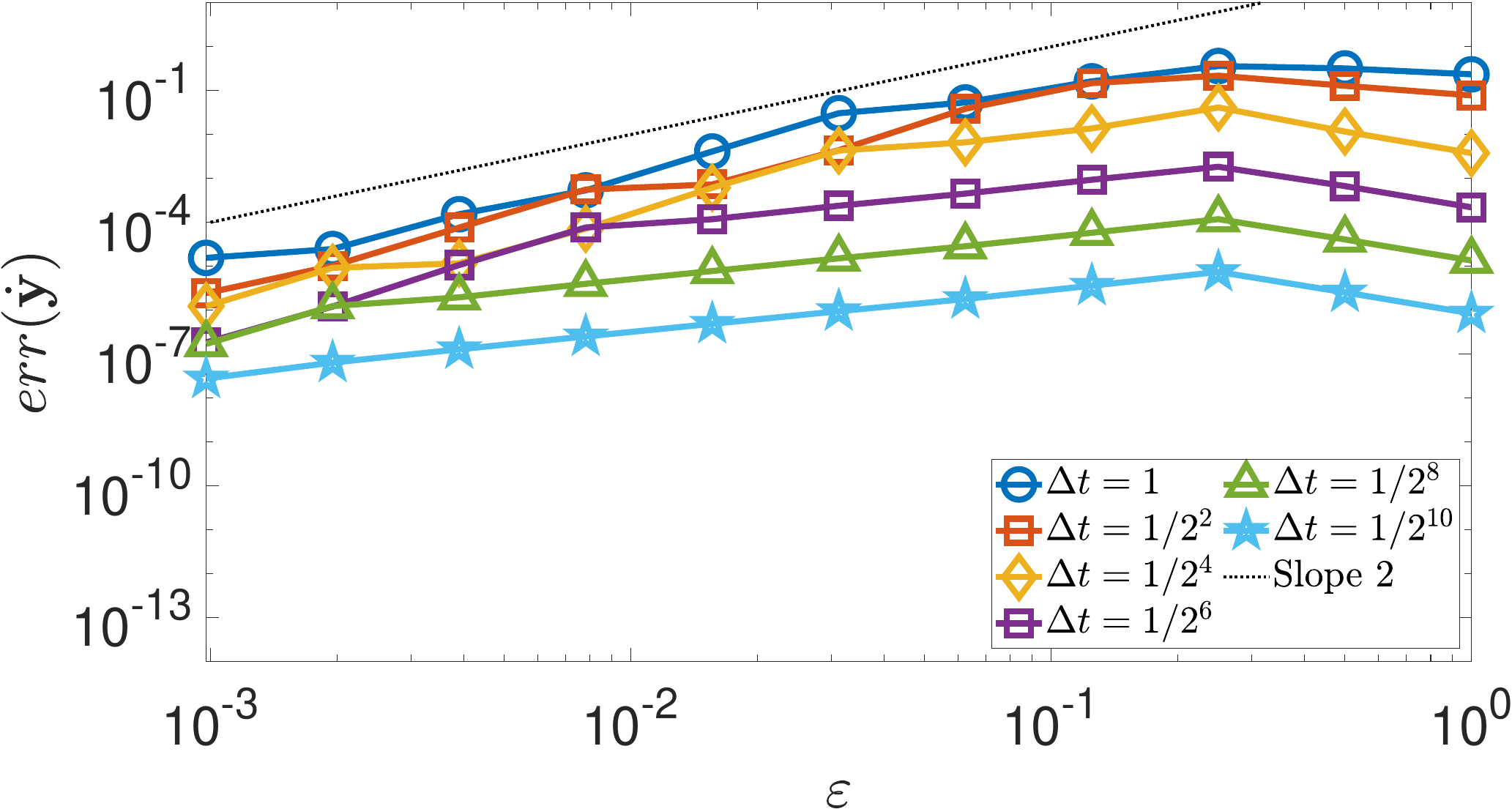}
  \end{subfigure}
  \vspace{1ex}
  \begin{subfigure}[b]{0.45\textwidth}
    \centering
    \includegraphics[width=\linewidth]{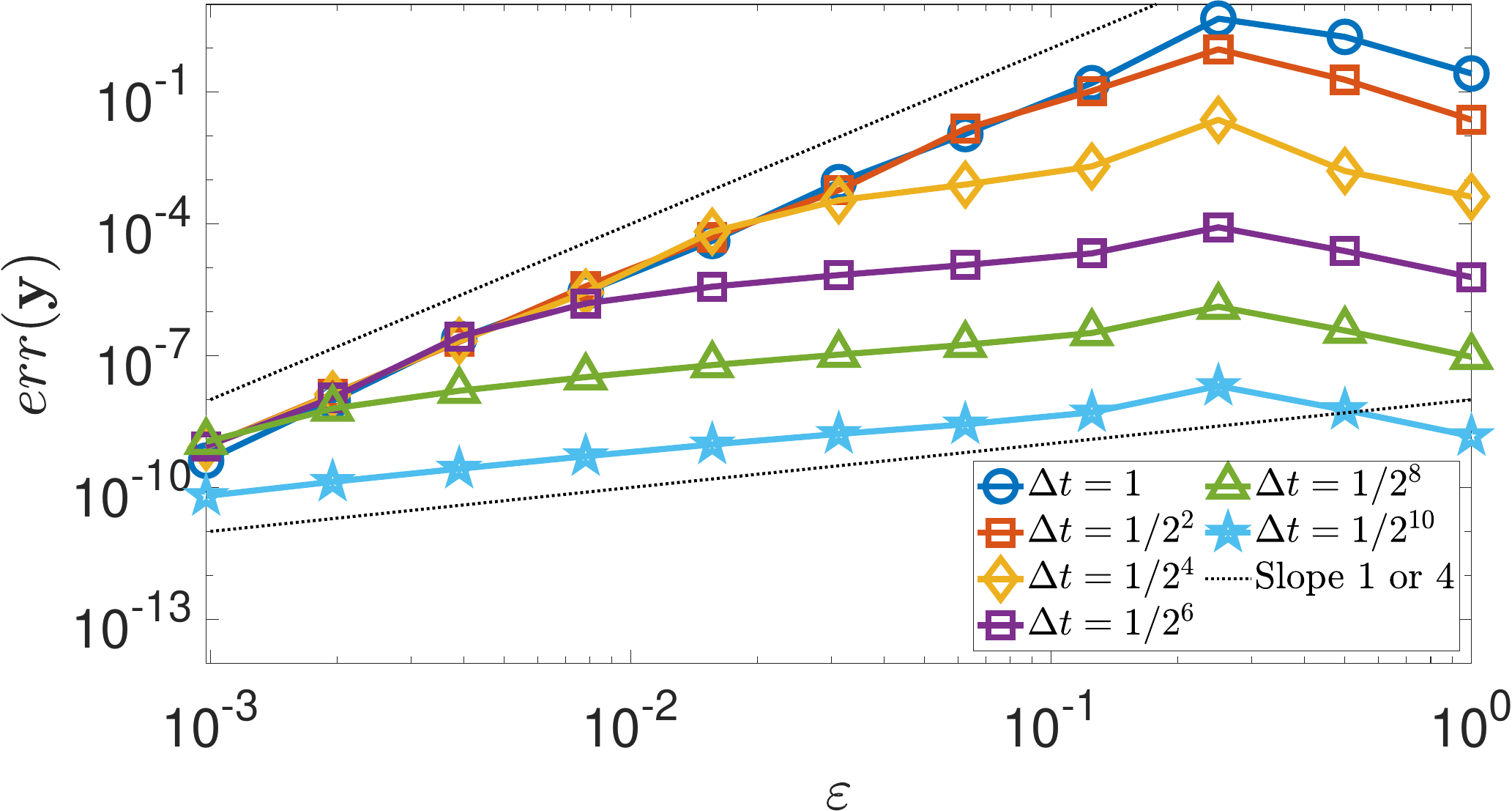}
  \end{subfigure}
  \begin{subfigure}[b]{0.45\textwidth}
    \centering
    \includegraphics[width=\linewidth]{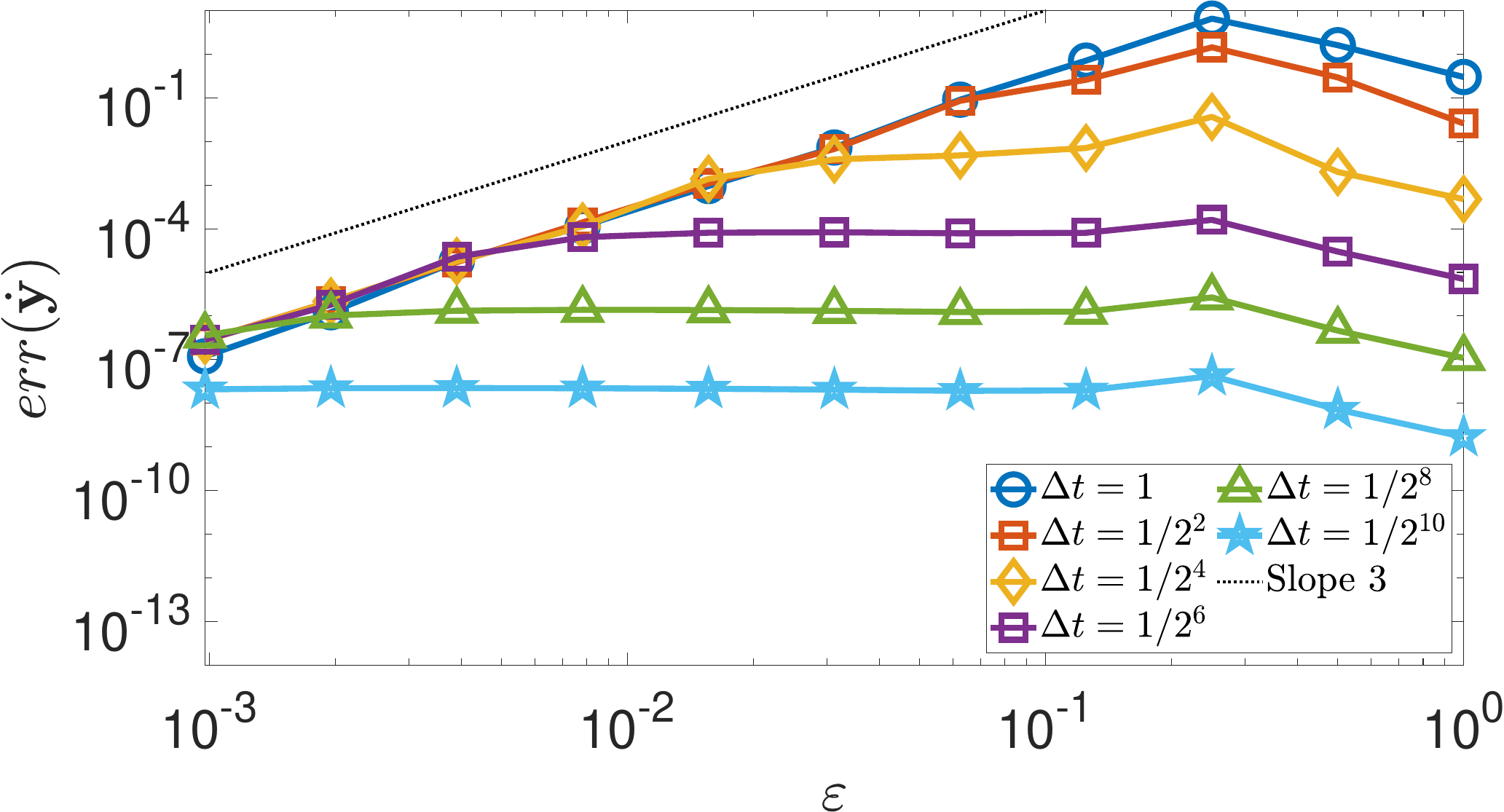}
  \end{subfigure}
  \vspace{1ex}
    \begin{subfigure}[b]{0.45\textwidth}
    \centering
    \includegraphics[width=\linewidth]{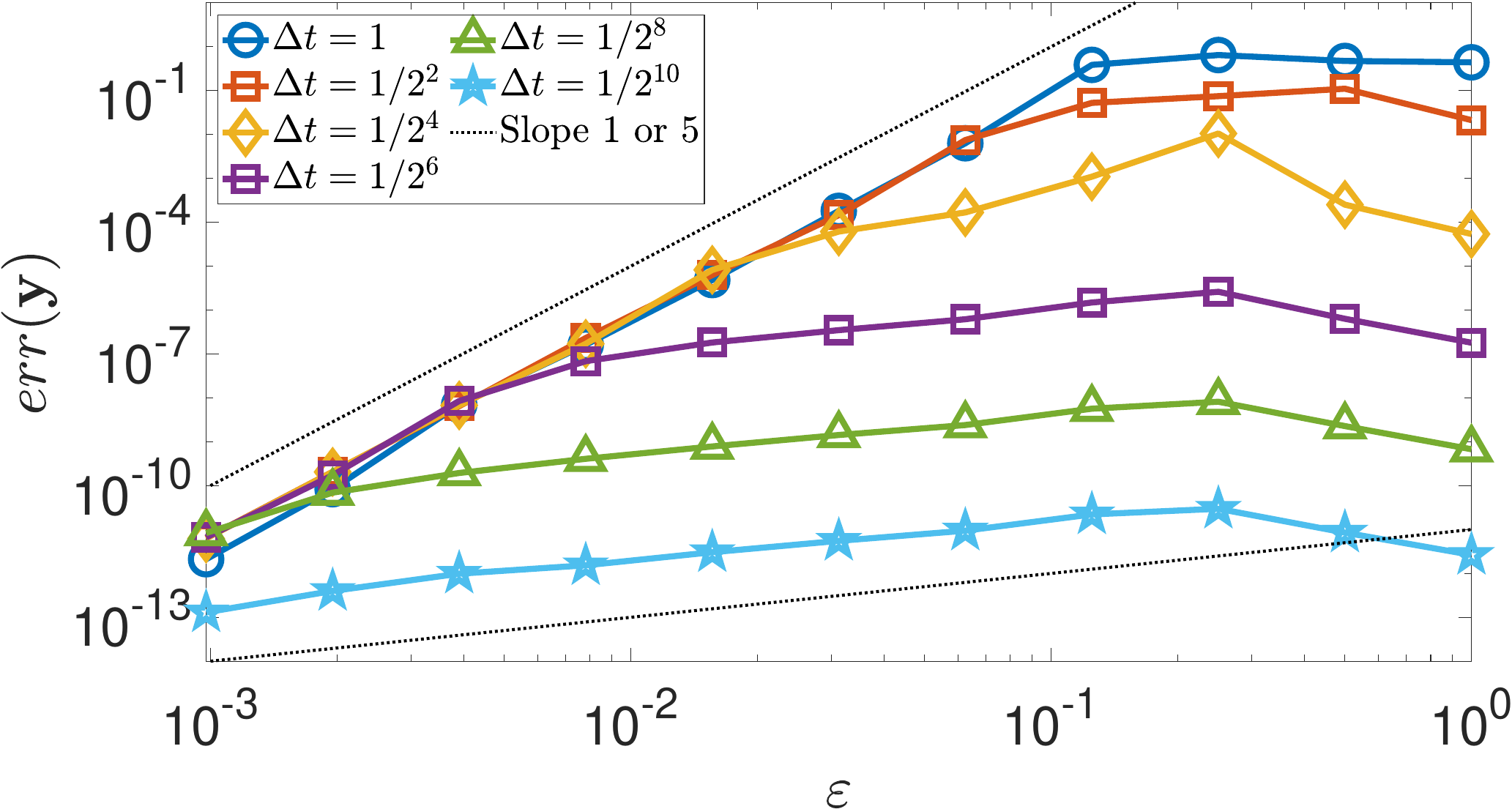}
  \end{subfigure}
  \begin{subfigure}[b]{0.45\textwidth}
    \centering
    \includegraphics[width=\linewidth]{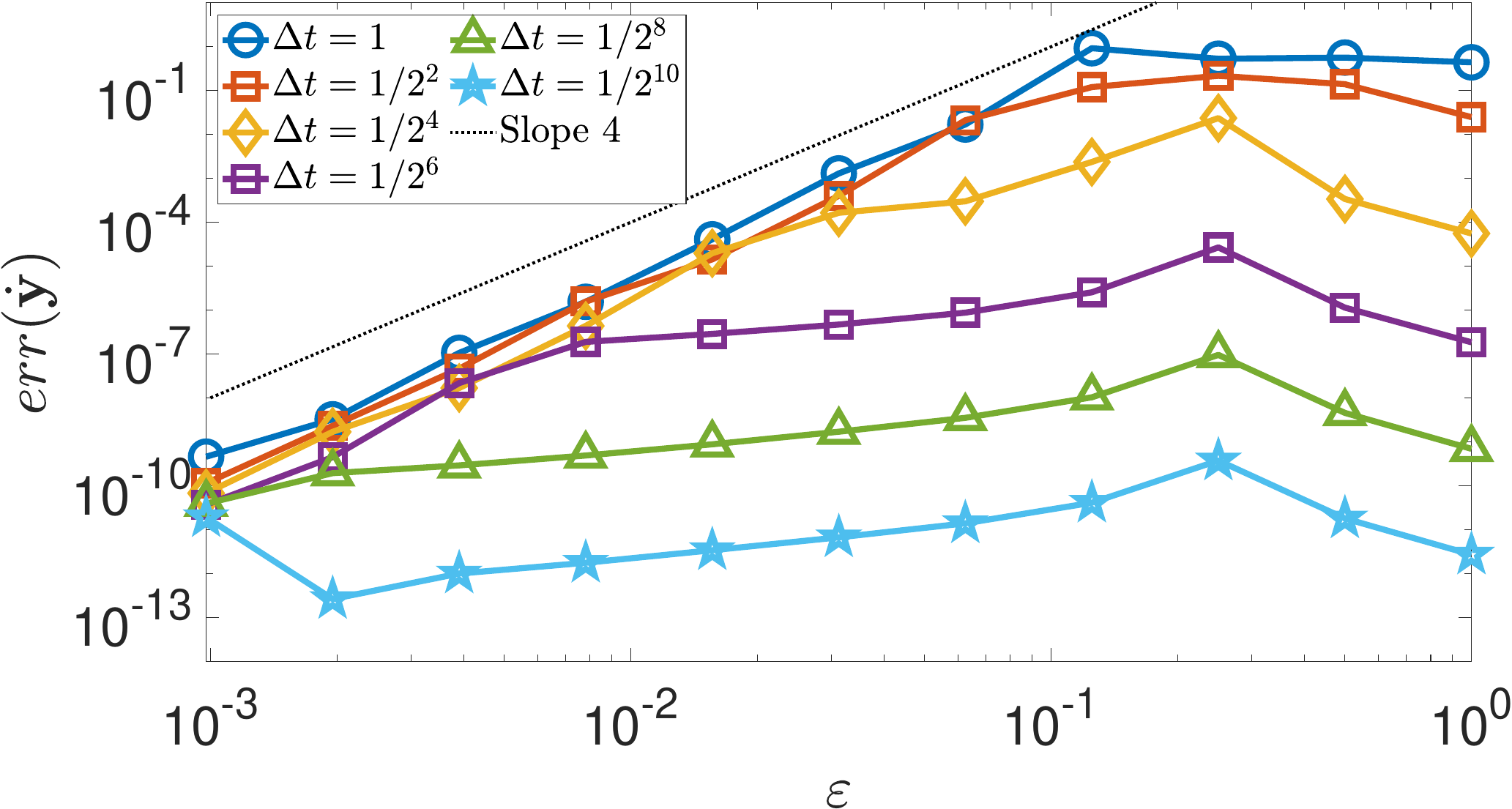}
  \end{subfigure}
  \vspace{1ex}
    \begin{subfigure}[b]{0.45\textwidth}
    \centering
    \includegraphics[width=\linewidth]{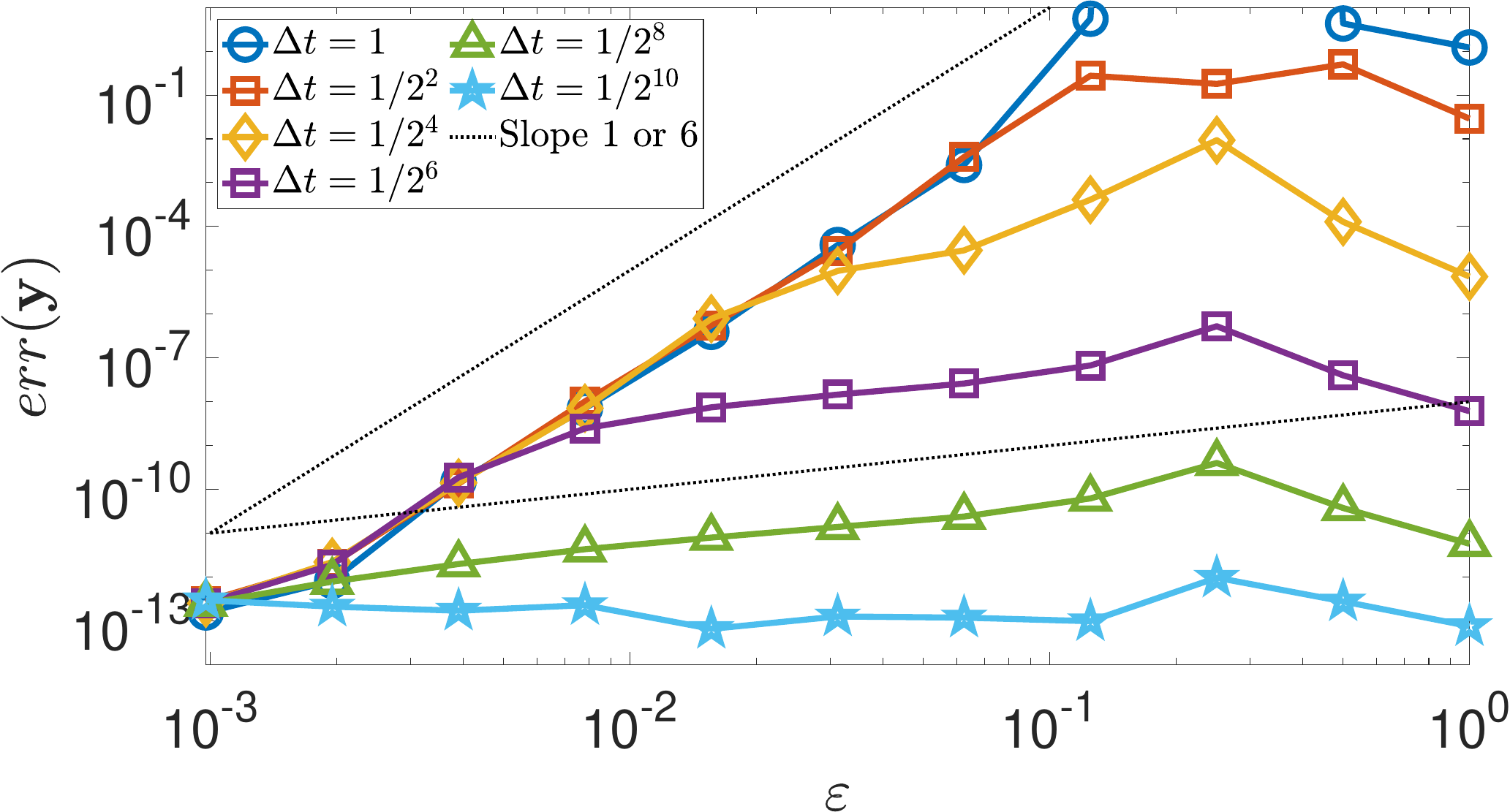}
  \end{subfigure}
  \begin{subfigure}[b]{0.45\textwidth}
    \centering
    \includegraphics[width=\linewidth]{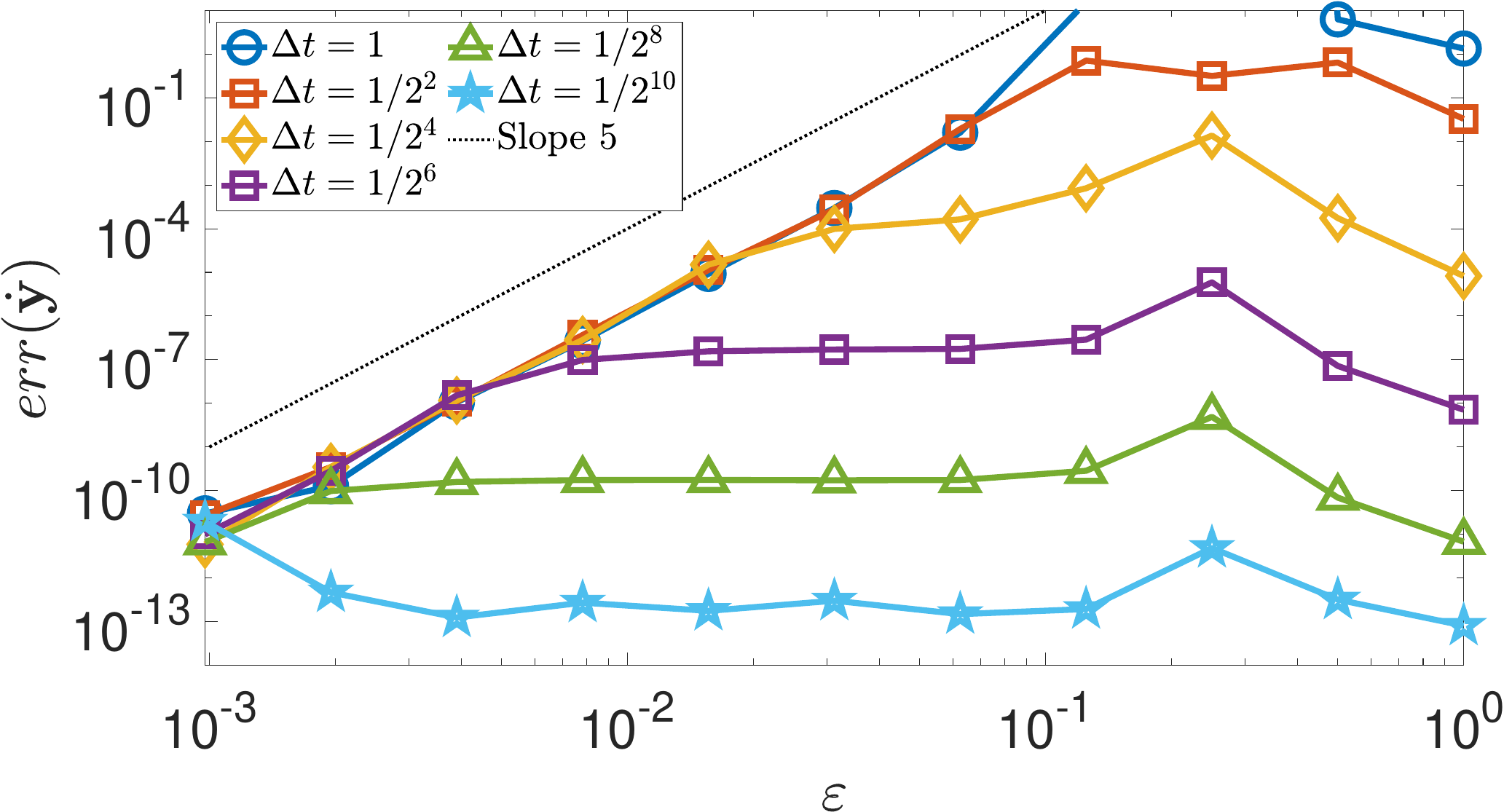}
  \end{subfigure}
  \caption{The left and right columns illustrate the dependence of the error in $\mathbf{y}$ and $\dot{\mathbf{y}}$ on $\varepsilon$, respectively. From top to bottom, each row corresponds to $k=1,2,3,4$.}
  \label{fig1-2}
\end{figure}

From the above results, it is evident that the LLEEI method exhibits several notable properties. First, it demonstrates an improved uniform convergence property: as $\varepsilon$ decreases, the numerical error decreases correspondingly; notably, for large step sizes, it decays rapidly as a power of $\varepsilon$. This distinguishes the method from most traditional approaches and meets the requirements of charged-particle simulations in strong magnetic fields. Second, we observe that the $O(\varepsilon^{k+2})$ bound, which is independent of $\Delta t$, leads to another counterintuitive yet highly useful phenomenon: even when $\Delta t$ is taken to be of $O(1)$ scale, far exceeding the cyclotron period, the numerical error remains at a very small level proportional to a power of the cyclotron frequency. This enables high-precision simulations with very large step sizes.

\refstepcounter{NumEx}\label{Ex2}
\textbf{Example \ref{Ex2}.} Consider the electric field generated by a point charge located at the origin:
\begin{equation*}
  \mathbf{E}(\mathbf{y})=\frac{1}{(y_1^2+y_2^2)^{\frac{3}{2}}}\begin{bmatrix} y_1\\y_2 \end{bmatrix}.
\end{equation*}
The initial position and velocity of the particle are chosen as $\mathbf{y}_0=\left[\frac{1}{3},\frac{1}{4}\right]^\top$ and $\dot{\mathbf{y}}_0=\left[\frac{2}{5},\frac{3}{5}\right]^\top$, respectively. The final time is set to $T=6$ with $\nu=0$. We perform a comparative study of the LLEEI methods and classical numerical schemes, particularly traditional EIs, to show that the local linear extension technique enhances numerical accuracy. The following numerical methods are considered:
\begin{itemize}
\item The second- and third-order exponential integrators from \cite{CM2002} and the fourth-order version from \cite{K2005}, denoted by ETDRK2, ETDRK3, and ETDRK4, respectively.
\item The $k$-th order Exponential Rosenbrock method from \cite{HOS2009}, denoted by EXPRB$k$. By using Lemma 1 in \cite{HOS2009} directly to compute (\ref{LLEEI01}), it can be shown that LLEEI2 and EXPRB2 are essentially equivalent.
\item The explicit splitting methods from \cite{WZ2021}. The schemes based on Lie-Trotter splitting and Strang splitting are denoted by ES1 and ES2, respectively. Using the subflow defined in these methods, the classical Yoshida splitting can also be applied directly, denoted by ES4.
\item The classical Boris algorithm.
\item The $k$-th order classical Runge-Kutta method, denoted by RK$k$.
\end{itemize}

The two rows of panels in Figure \ref{fig2-1} present numerical results for the error with respect to $\Delta t$ for the second- to fourth-order versions of each method, with $\varepsilon=1/2^4$ and $\varepsilon=1/2^8$, respectively. When $\varepsilon=1/2^4$, the problem is weakly oscillatory, and all numerical methods attain their theoretical accuracy. The differences among the exponential-type methods are small, but their error performance is superior to that of the other methods. However, in the strong magnetic field case represented by $\varepsilon=1/2^8$, the splitting methods and ETDRK methods exhibit only stability for the stiffness when large step sizes are applied, and their convergence is constrained by the oscillatory nature of the solution. The EXPRB methods, which linearize only the first-order part via the Jacobian matrix of the nonlinear term, also exhibit a reduction in convergence order. In contrast, the LLEEI method retains a significant accuracy advantage over all other tested methods across the entire range of step sizes considered.

In each panel, one or two vertical dashed lines are drawn. The dashed line in the panels of the first row and the left dashed line in the panels of the second row represent the threshold $h=\tilde{c}_1\varepsilon$ given in Section \ref{Sec3-2}. The right dashed line in the panels of the second row represents the threshold $h=\tilde{c}_2$ given in Section \ref{Sec3-3}, where the two infima in (\ref{GD-03}) are taken as their respective minimum values over all numerical solutions. In this example, $\tilde{c}_2\approx 0.14$. It can be observed that the two empirical thresholds $\tilde{c}_1$ and $\tilde{c}_2$ effectively distinguish the different convergence behaviors of the LLEEI method across the three scales. In contrast, the other methods exhibit their theoretical convergence orders only in the left region. When $h>\tilde{c}_1\varepsilon$, a comparison of the performance of the same method for different orders $k$ shows that the error curves of the traditional EIs are nearly identical, indicating that increasing the order does not improve the accuracy. By contrast, as $k$ increases, the accuracy of the LLEEI method improves accordingly, and its relative advantage over the other schemes becomes more pronounced when higher-order methods are used.

\begin{figure}[htbp]
  \centering
  \begin{subfigure}[b]{0.3\textwidth}
    \centering
    \includegraphics[width=\linewidth]{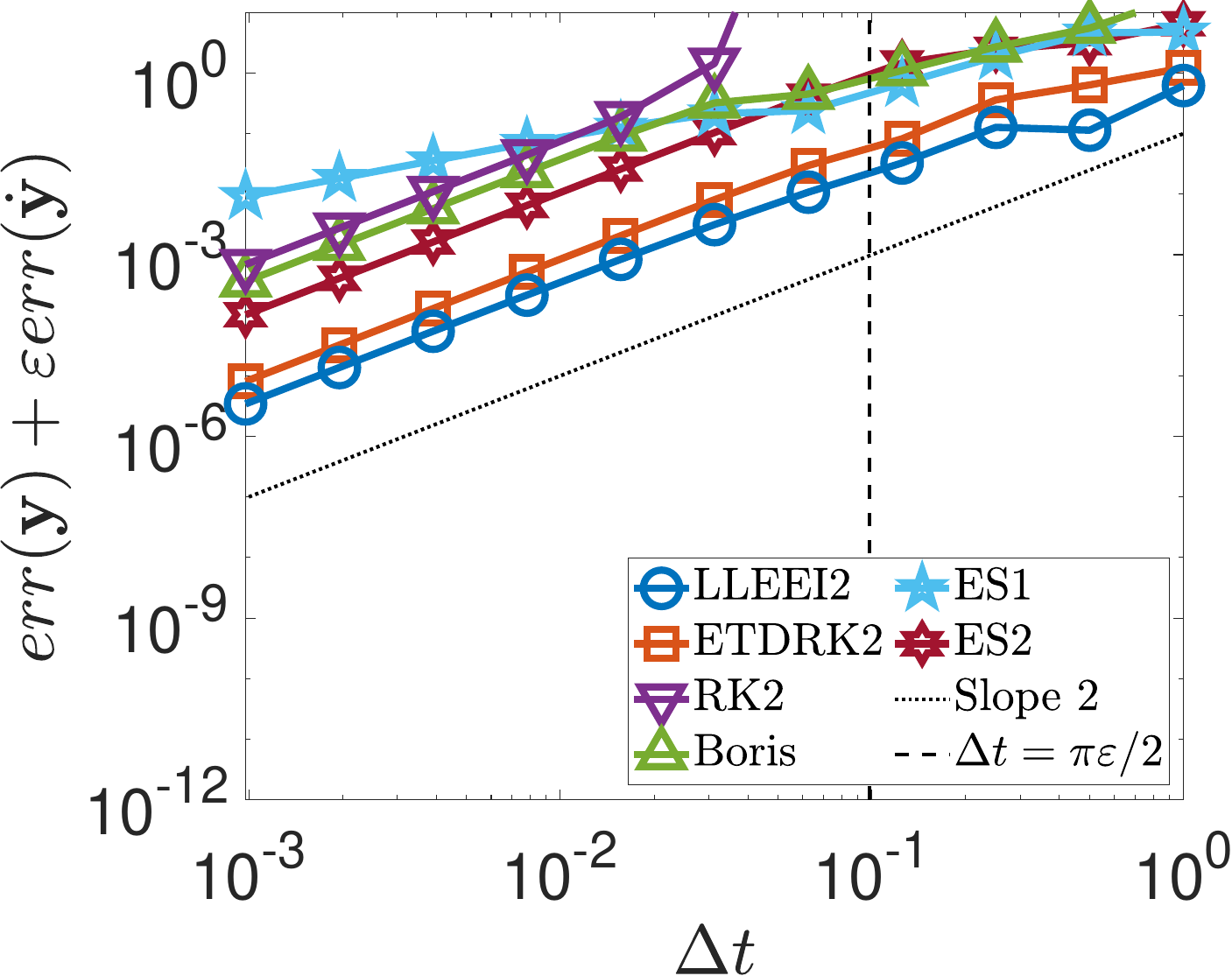}
  \end{subfigure}
  \begin{subfigure}[b]{0.3\textwidth}
    \centering
    \includegraphics[width=\linewidth]{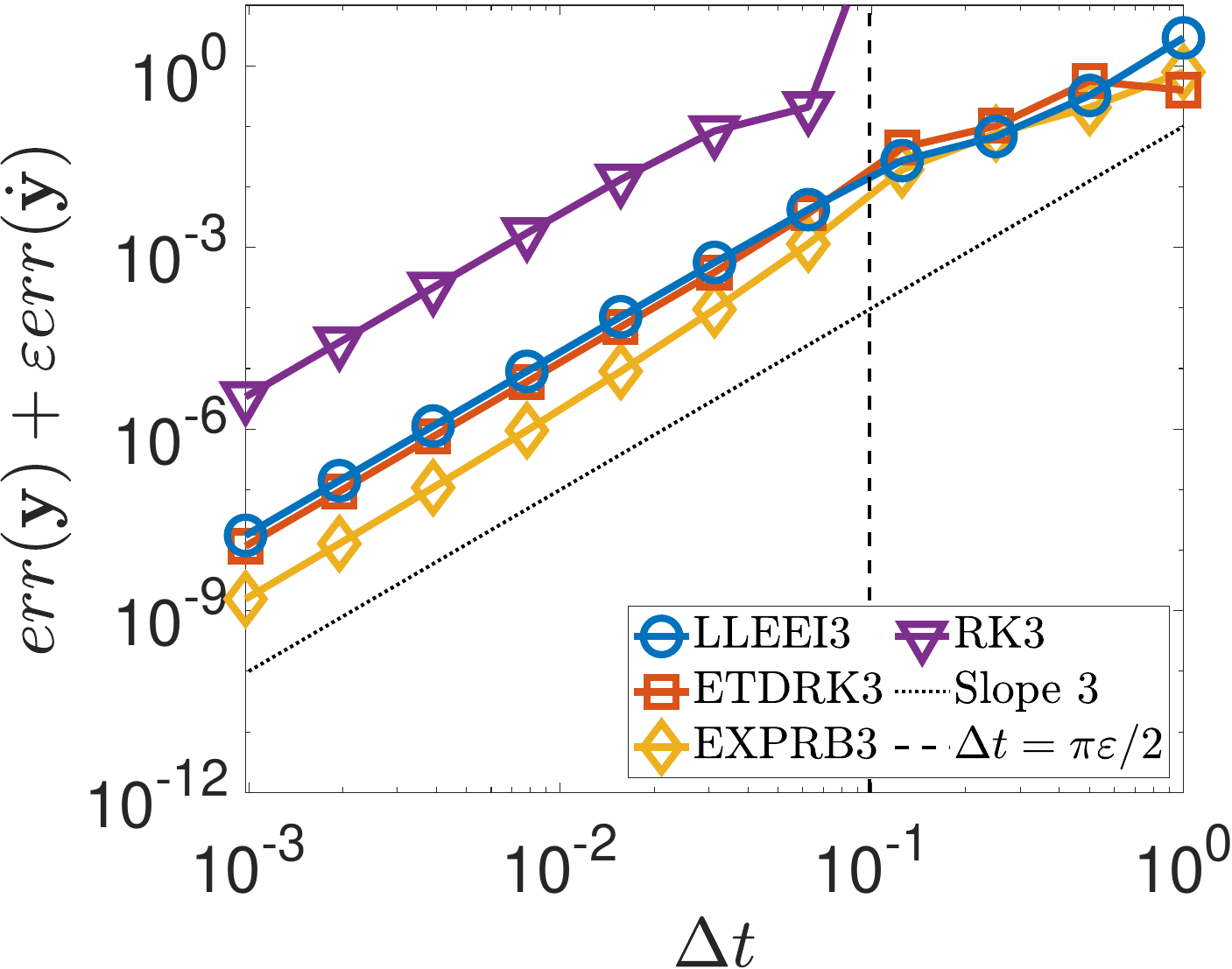}
  \end{subfigure}
  \begin{subfigure}[b]{0.3\textwidth}
    \centering
    \includegraphics[width=\linewidth]{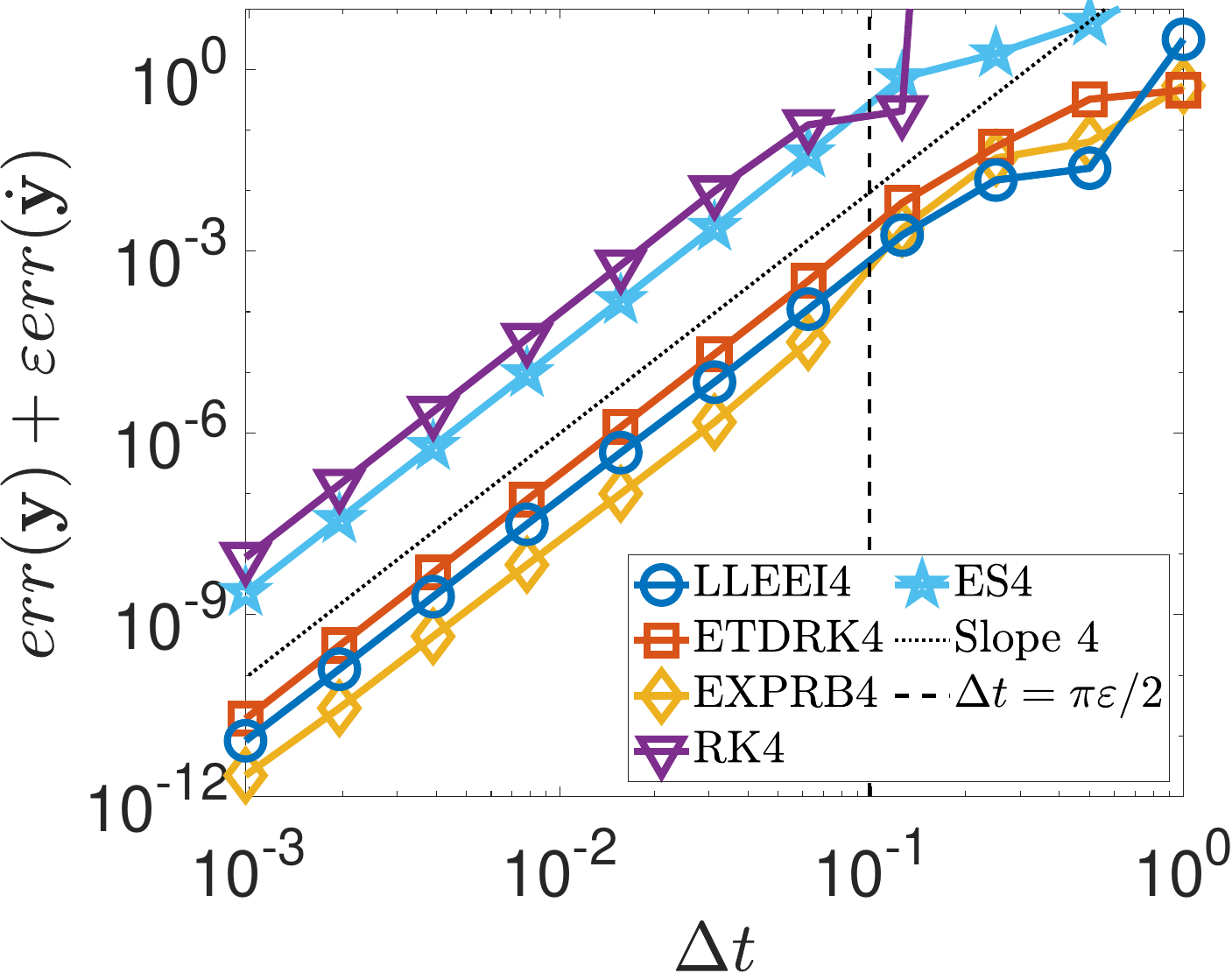}
  \end{subfigure}
  \vspace{1ex}
  \begin{subfigure}[b]{0.3\textwidth}
    \centering
    \includegraphics[width=\linewidth]{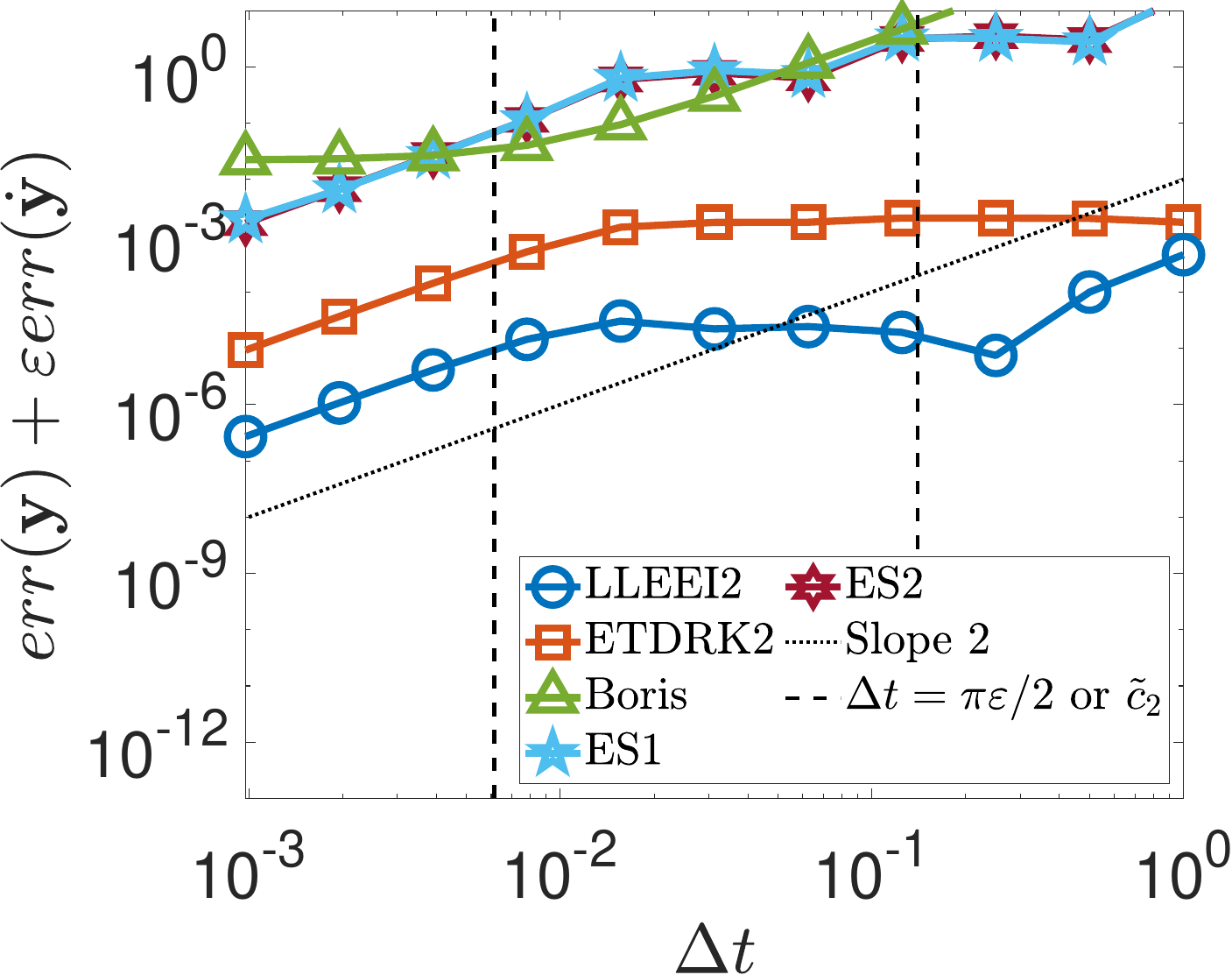}
  \end{subfigure}
  \begin{subfigure}[b]{0.3\textwidth}
    \centering
    \includegraphics[width=\linewidth]{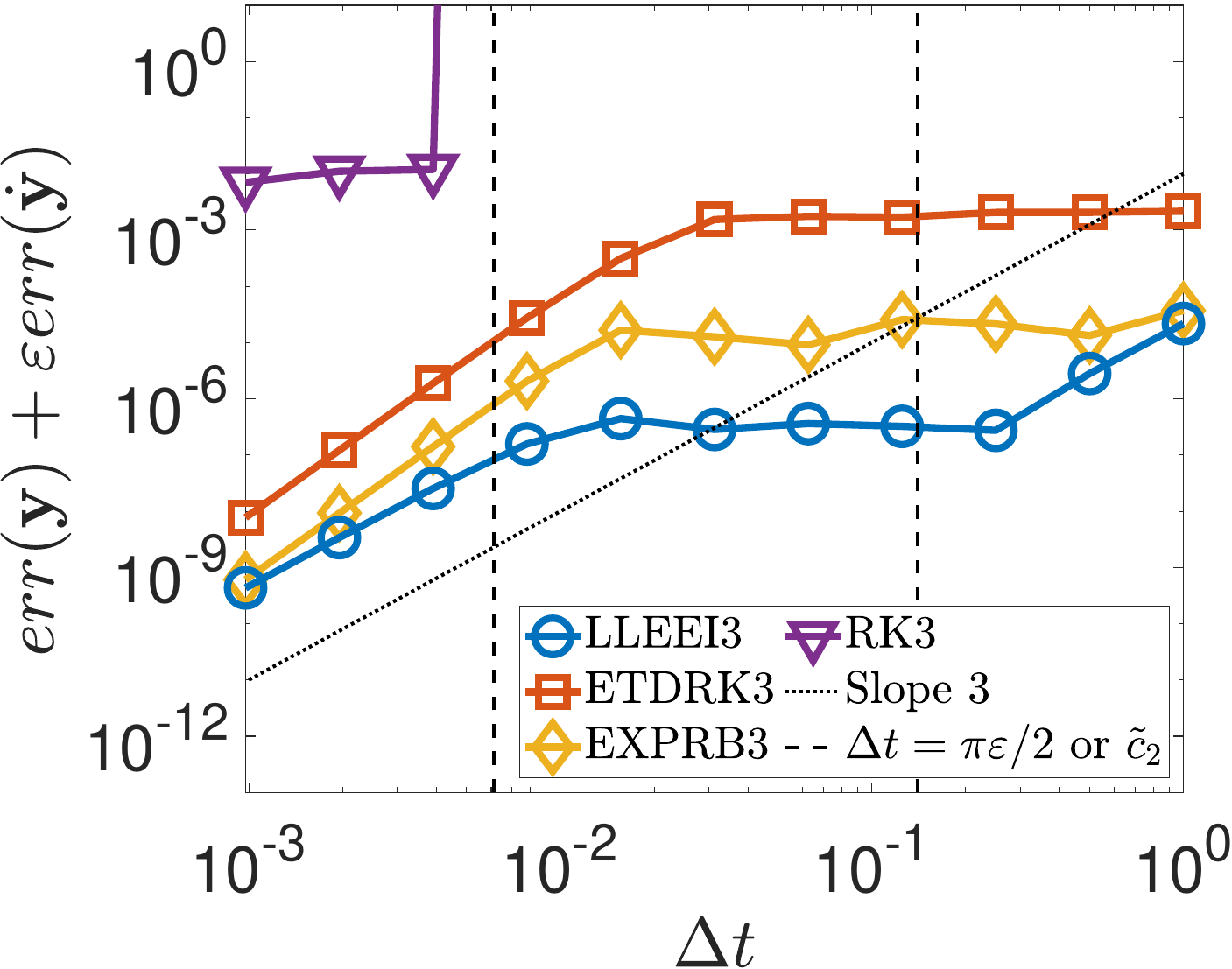}
  \end{subfigure}
    \begin{subfigure}[b]{0.3\textwidth}
    \centering
    \includegraphics[width=\linewidth]{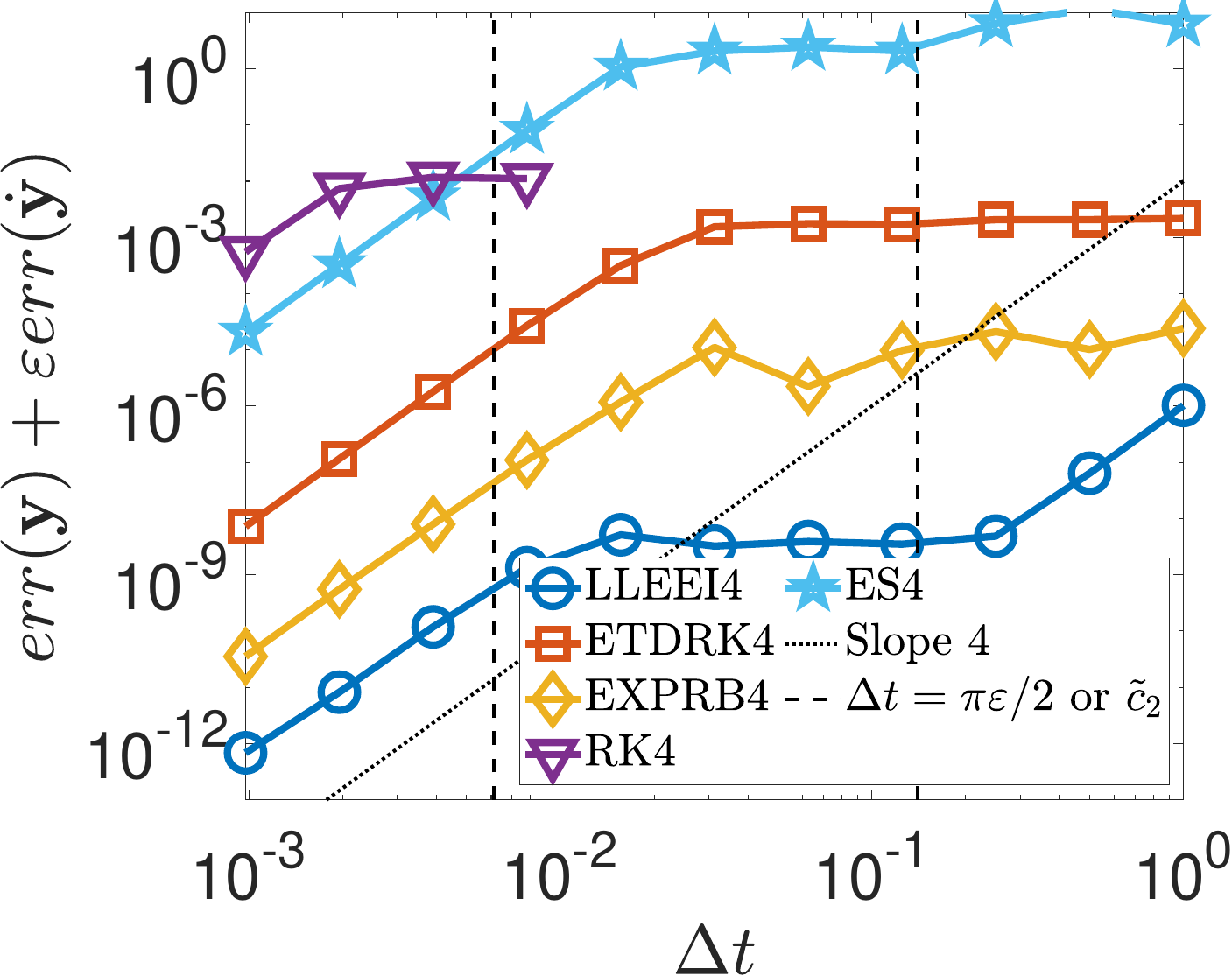}
  \end{subfigure}
  \caption{The error versus $\Delta t$. In the first row, $\varepsilon=1/2^4$; in the second row, $\varepsilon=1/2^8$. From left to right: second-order, third-order, and fourth-order schemes, respectively.}
  \label{fig2-1}
\end{figure}

Figure \ref{fig2-2} shows the error of each method as a function of $\varepsilon$ for two fixed step sizes $h=\Delta t=1/2^4$ and $h=\Delta t=1/2^8$. In each panel, a vertical dashed line is drawn to indicate the threshold $\varepsilon=\frac{h}{\tilde{c}_1}$ determined by $\tilde{c}_1$. For $\varepsilon>\frac{h}{\tilde{c}_1}$, corresponding to the small-step case, only the LLEEI method maintains uniform accuracy with respect to $\varepsilon$. The errors of the Boris method and the RK method increase significantly as $\varepsilon$ decreases. Among the splitting methods, only the first-order scheme constructed via Lie–Trotter splitting exhibits uniform convergence; increasing the order via Strang splitting or Yoshida splitting leads to a loss of uniform accuracy, in agreement with the theoretical analyses in \cite{WZ2021,Y2024}. Similar behavior is also observed in traditional EIs as $k$ increases. In the left region corresponding to the large-step case $\varepsilon<\frac{h}{\tilde{c}_1}$, the exponential-type methods produce smaller errors than the non-exponential methods. In particular, the error of the LLEEI methods decreases more rapidly as $\varepsilon$ decreases, thereby demonstrating a clear advantage over the other schemes in the highly oscillatory regime due to their improved uniform accuracy.

\begin{figure}[htbp]
  \centering
  \begin{subfigure}[b]{0.3\textwidth}
    \centering
    \includegraphics[width=\linewidth]{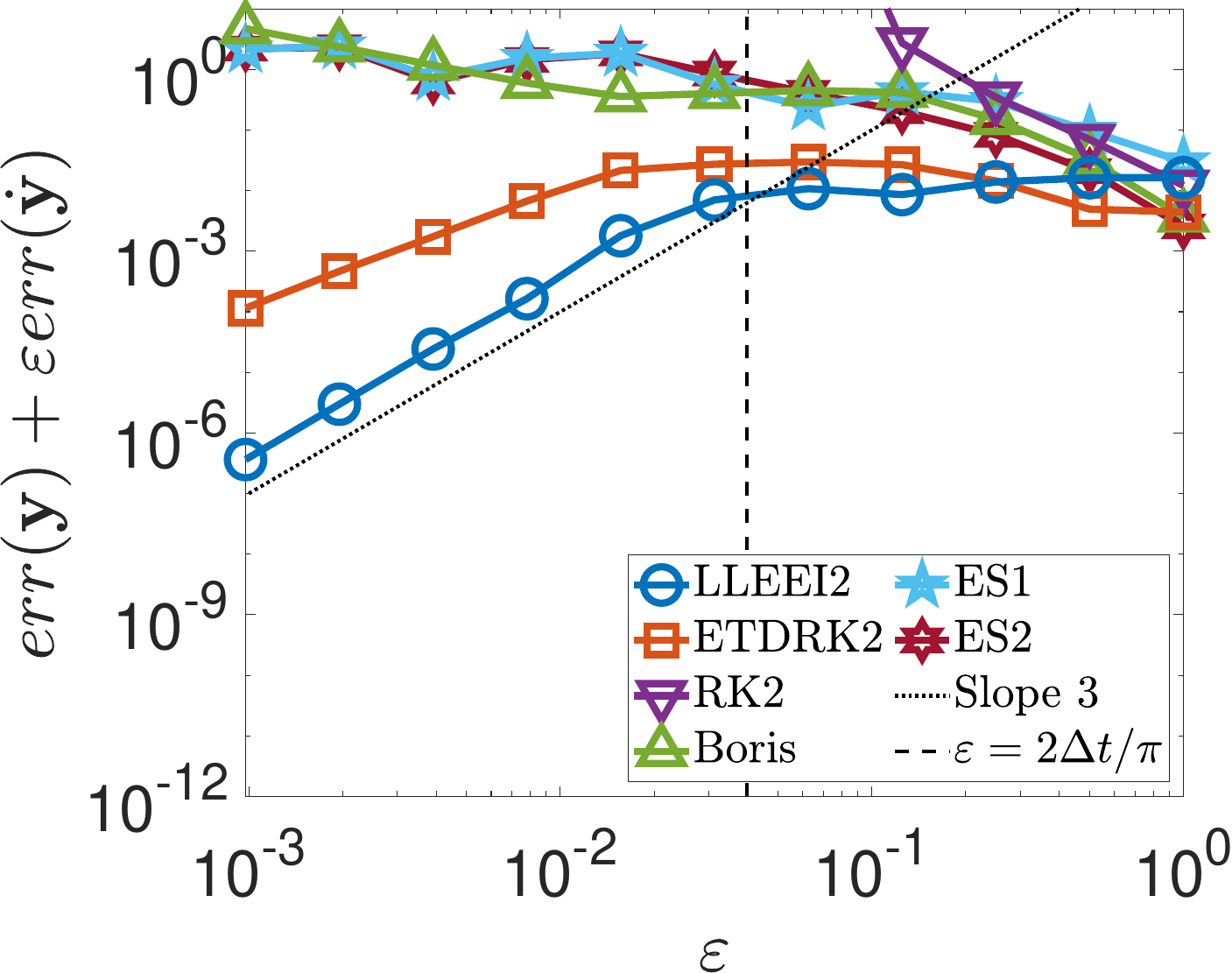}
  \end{subfigure}
  \begin{subfigure}[b]{0.3\textwidth}
    \centering
    \includegraphics[width=\linewidth]{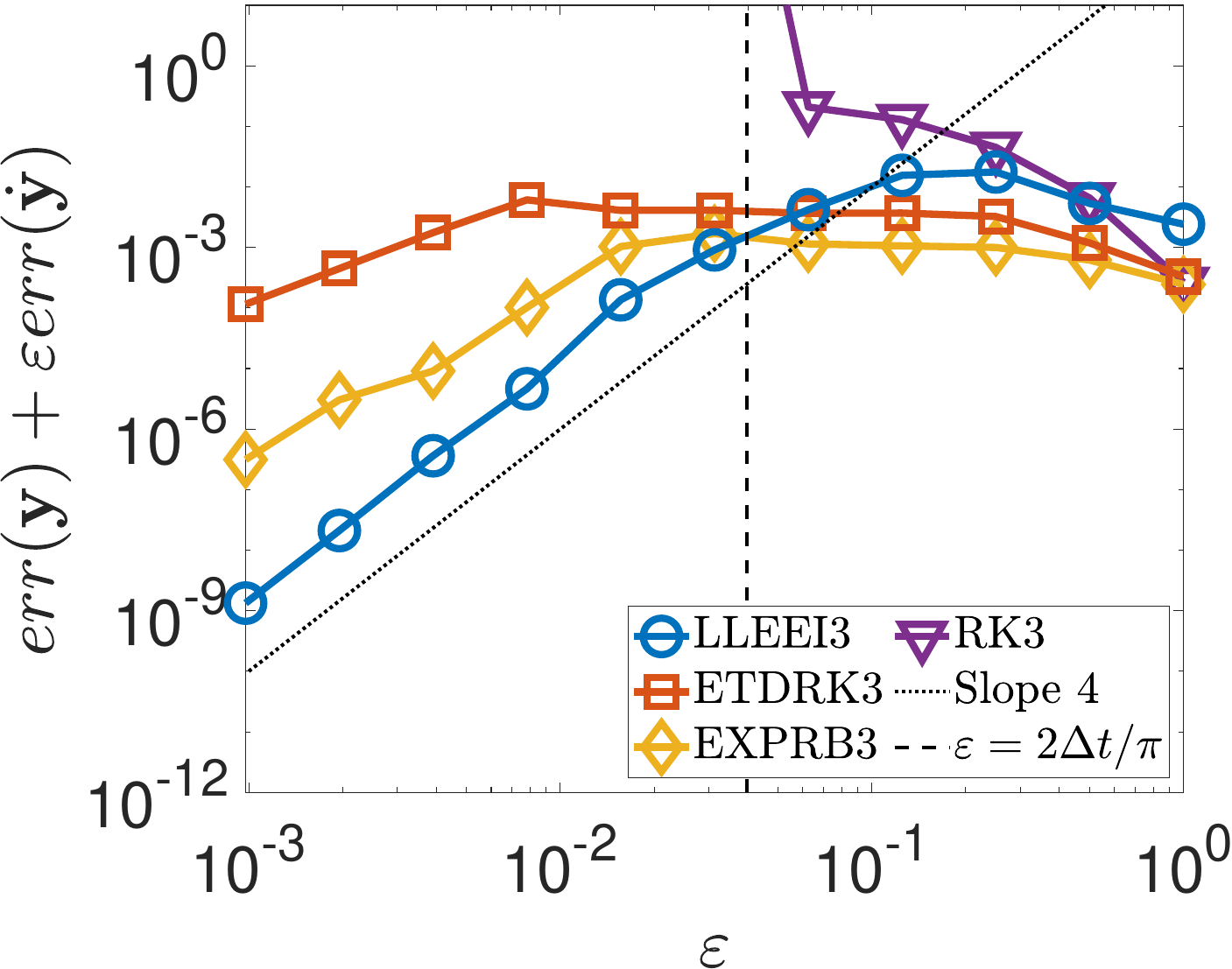}
  \end{subfigure}
  \begin{subfigure}[b]{0.3\textwidth}
    \centering
    \includegraphics[width=\linewidth]{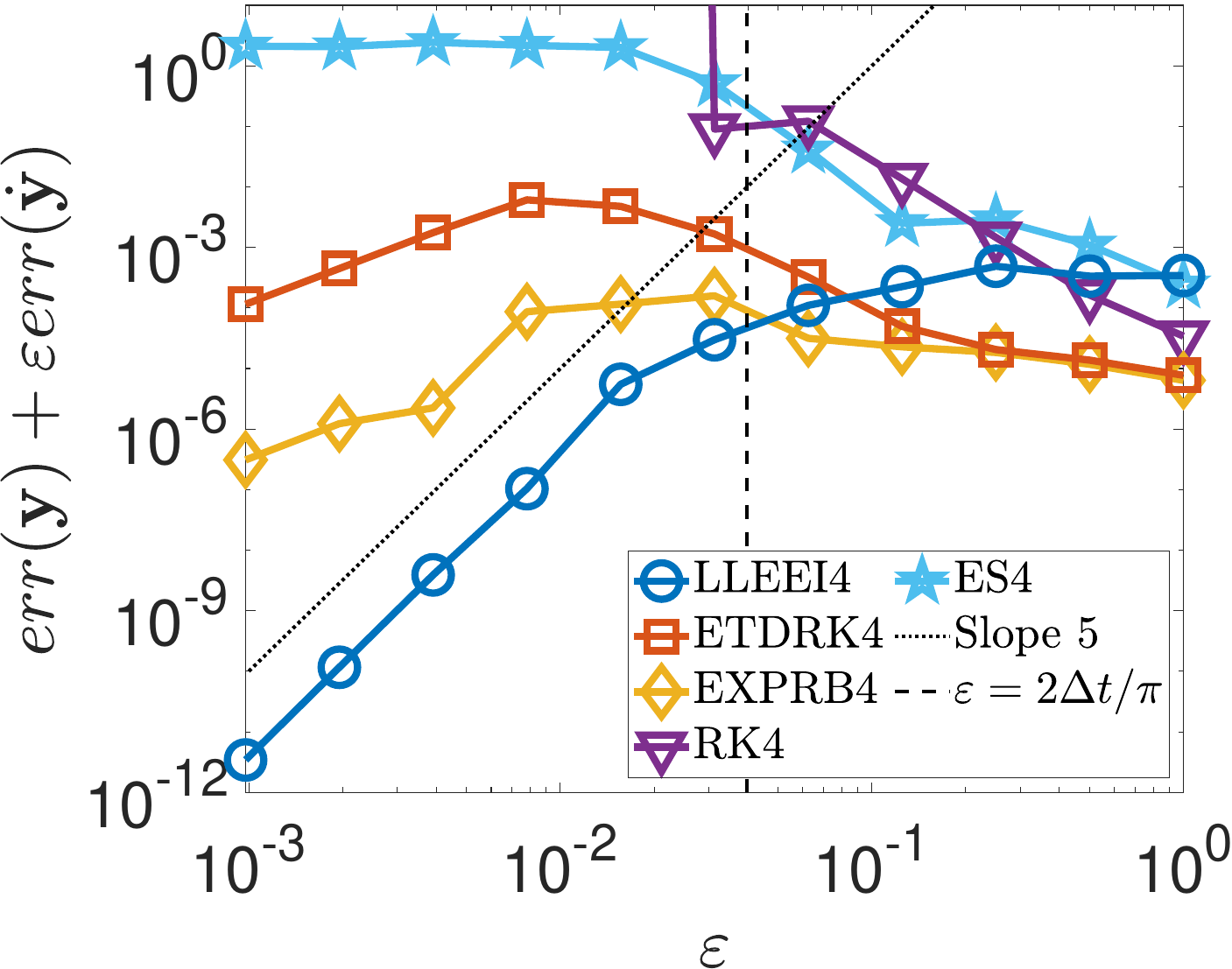}
  \end{subfigure}
  \vspace{1ex}
  \begin{subfigure}[b]{0.3\textwidth}
    \centering
    \includegraphics[width=\linewidth]{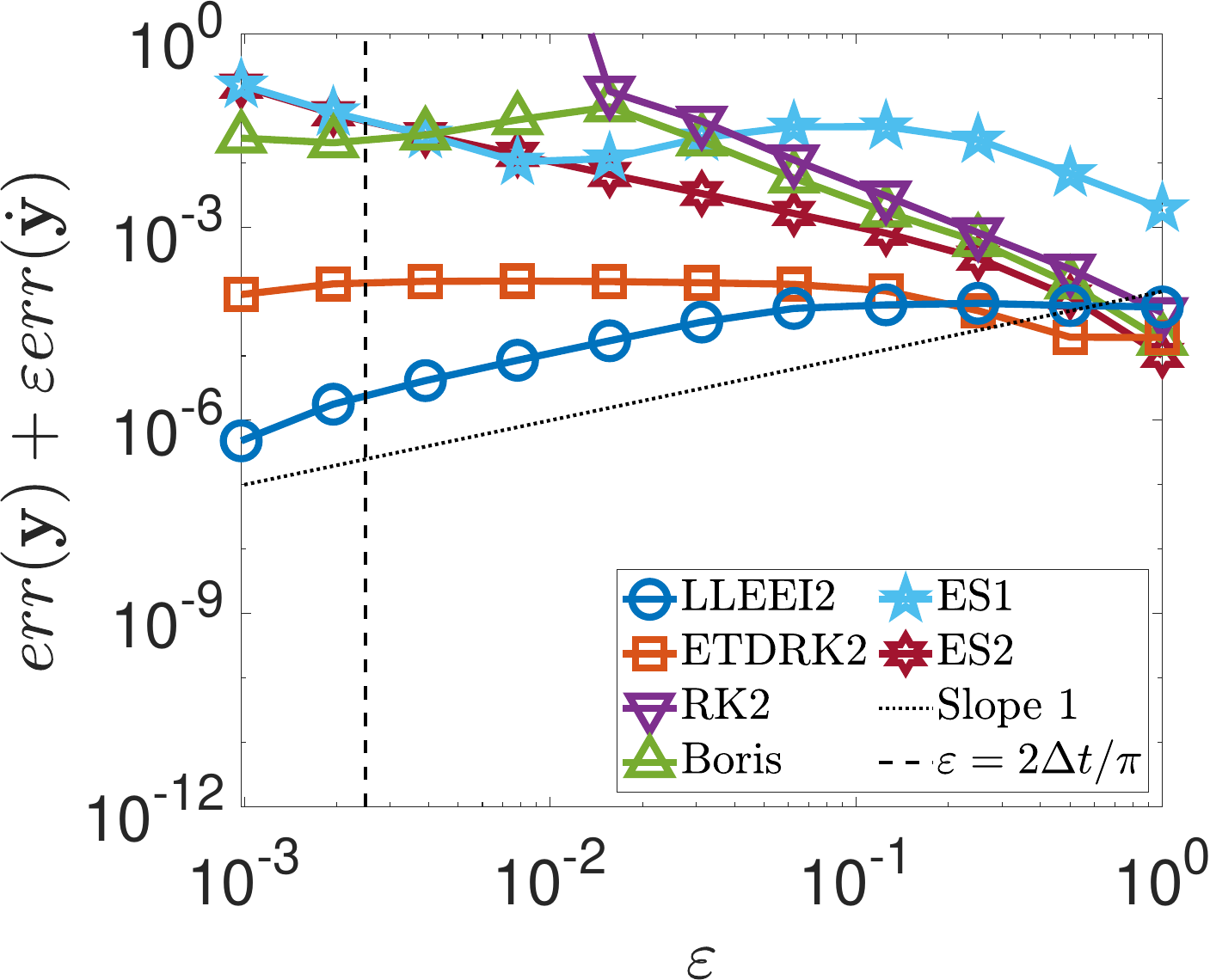}
  \end{subfigure}
  \begin{subfigure}[b]{0.3\textwidth}
    \centering
    \includegraphics[width=\linewidth]{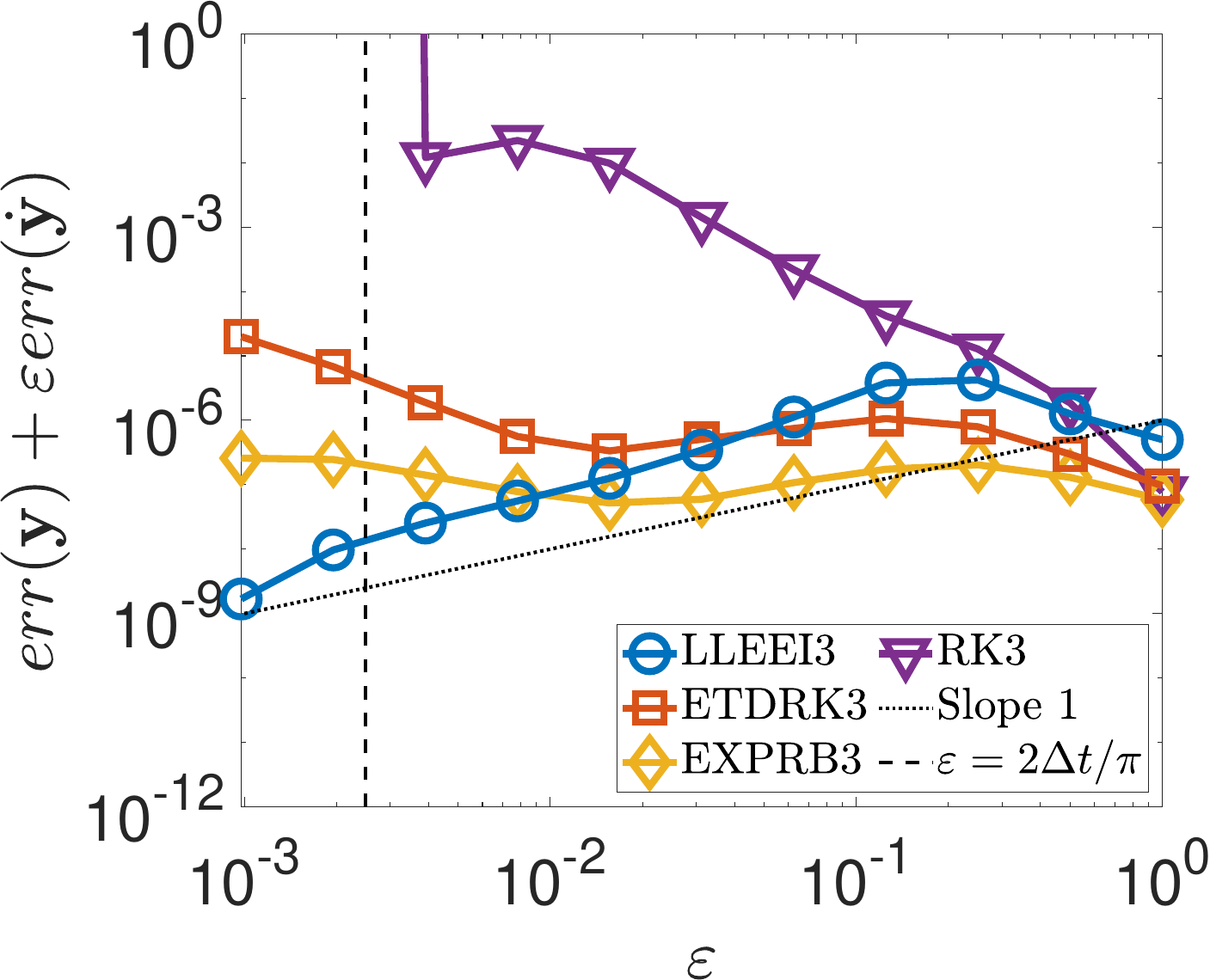}
  \end{subfigure}
    \begin{subfigure}[b]{0.3\textwidth}
    \centering
    \includegraphics[width=\linewidth]{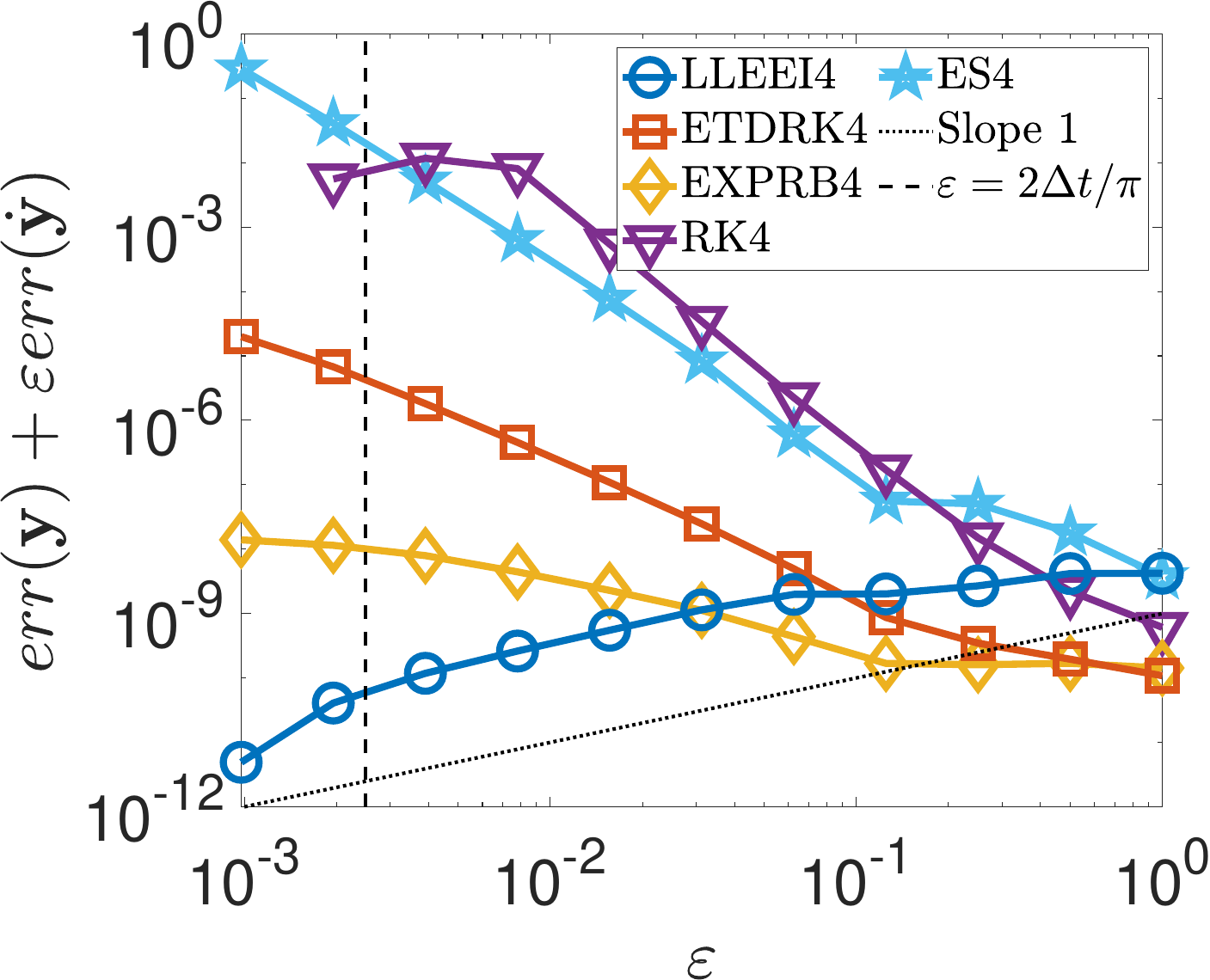}
  \end{subfigure}
  \caption{The error versus $\varepsilon$. In the first row, $\Delta t=1/2^4$; in the second row, $\Delta t=1/2^8$. From left to right: second-order, third-order, and fourth-order schemes, respectively.}
  \label{fig2-2}
\end{figure}

\refstepcounter{NumEx}\label{Ex3}
\textbf{Example \ref{Ex3}.} In this example, we examine the convergence performance of the LLEEI scheme over long time intervals $[0,T/\varepsilon]$ with $T=8$. The electric field is taken as
\begin{equation*}
  \mathbf{E}(\mathbf{y})=\mathrm{e}^{-y_2^2}\begin{bmatrix} \sin y_1\\y_2\cos y_1 \end{bmatrix},
\end{equation*}
which is derived from the potential $U(\mathbf{y})=\frac{1}{2}\mathrm{e}^{-y_2^2}\cos y_1$. The initial position and velocity are chosen as $\mathbf{y}_0=\left[\frac{3}{4},0\right]^\top$ and $\dot{\mathbf{y}}_0=\left[-\frac{5}{2},1\right]^\top$. Since the simulation interval becomes very long when $\varepsilon$ is small, computing a reference solution is extremely expensive. Therefore, we instead use the error in the total energy (\ref{Energy}), denoted by $err(H)$, as our performance metric, since this quantity is invariant.

We first fix several values of $\varepsilon$ and vary $\Delta t$. The dependence of $err(H)$ on $\Delta t$ is shown in Figure \ref{fig3-1}. Note that although we vary $\Delta t$, the actual step size used here is large $(h=\Delta t/\varepsilon)$. We observe that the LLEEI$(k+1)$ method exhibits a convergence order of $O(\Delta t^{k+1})$, which is consistent with the prediction of Theorem \ref{Thm-CPD2D-HomoB-LongTime}. One also observes that, as $\Delta t$ decreases, the error curves corresponding to smaller values of $\varepsilon$ deviate successively from the slope $k+1$. This is because the step size at this stage is already on the $O(1)$ scale and falls below the threshold $\tilde{c}_2$; consequently, the error remains bounded, as already verified in the previous two examples for step sizes $h\in[\tilde{c}_1\varepsilon,\tilde{c}_2]$.

\begin{figure}[htbp]
  \centering
  \begin{subfigure}[b]{0.3\textwidth}
    \centering
    \includegraphics[width=\linewidth]{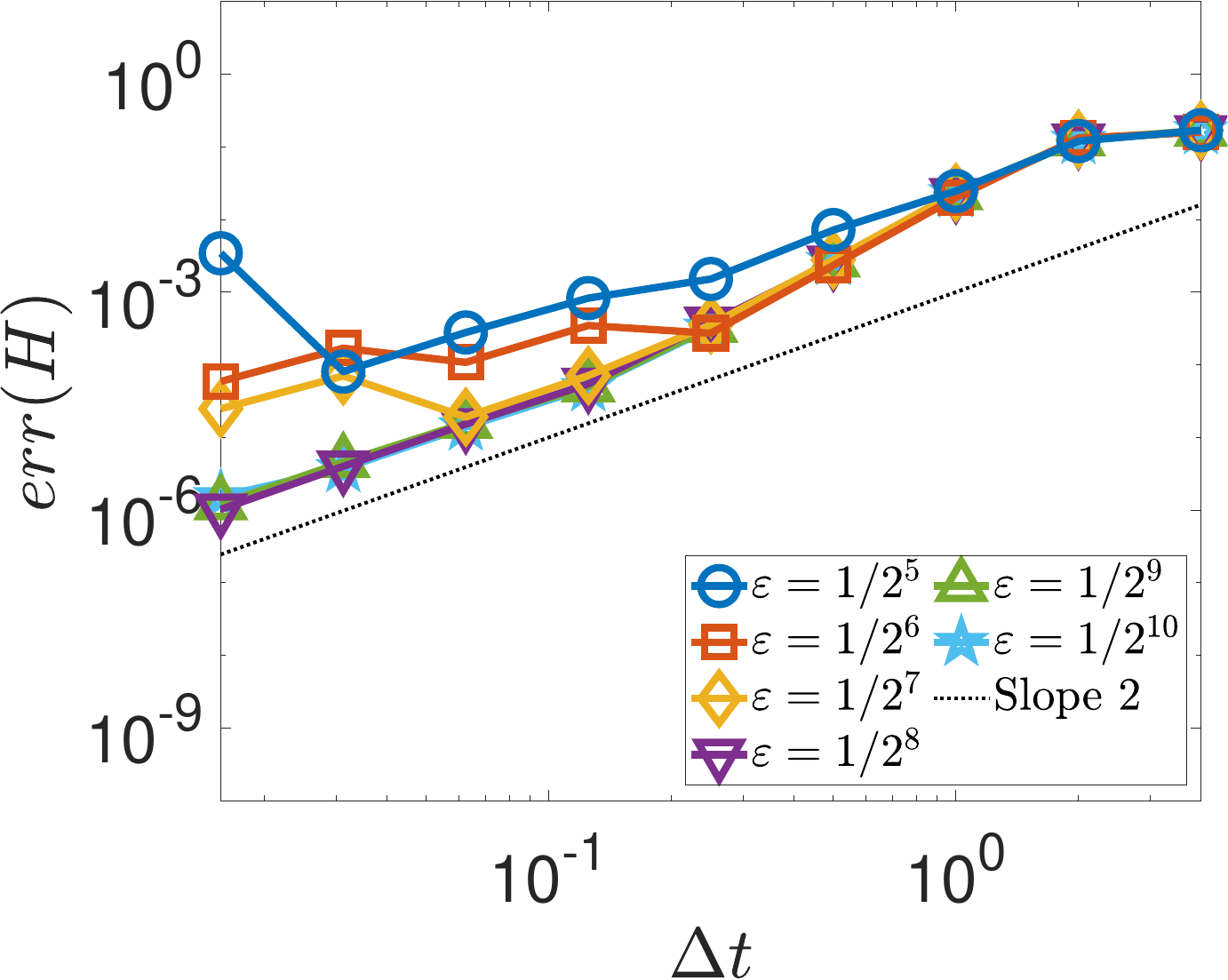}
  \end{subfigure}
  \begin{subfigure}[b]{0.3\textwidth}
    \centering
    \includegraphics[width=\linewidth]{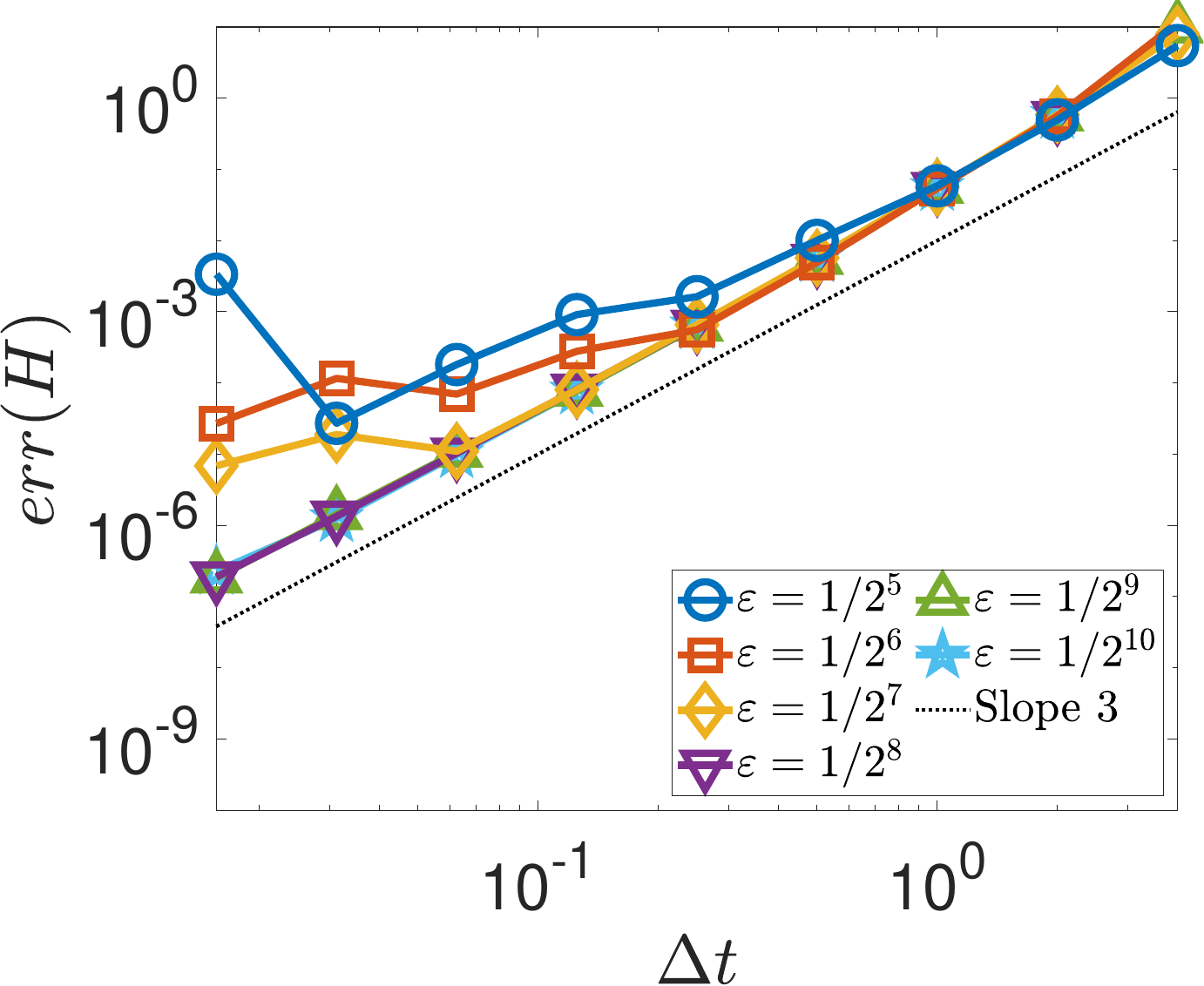}
  \end{subfigure}
  \begin{subfigure}[b]{0.3\textwidth}
    \centering
    \includegraphics[width=\linewidth]{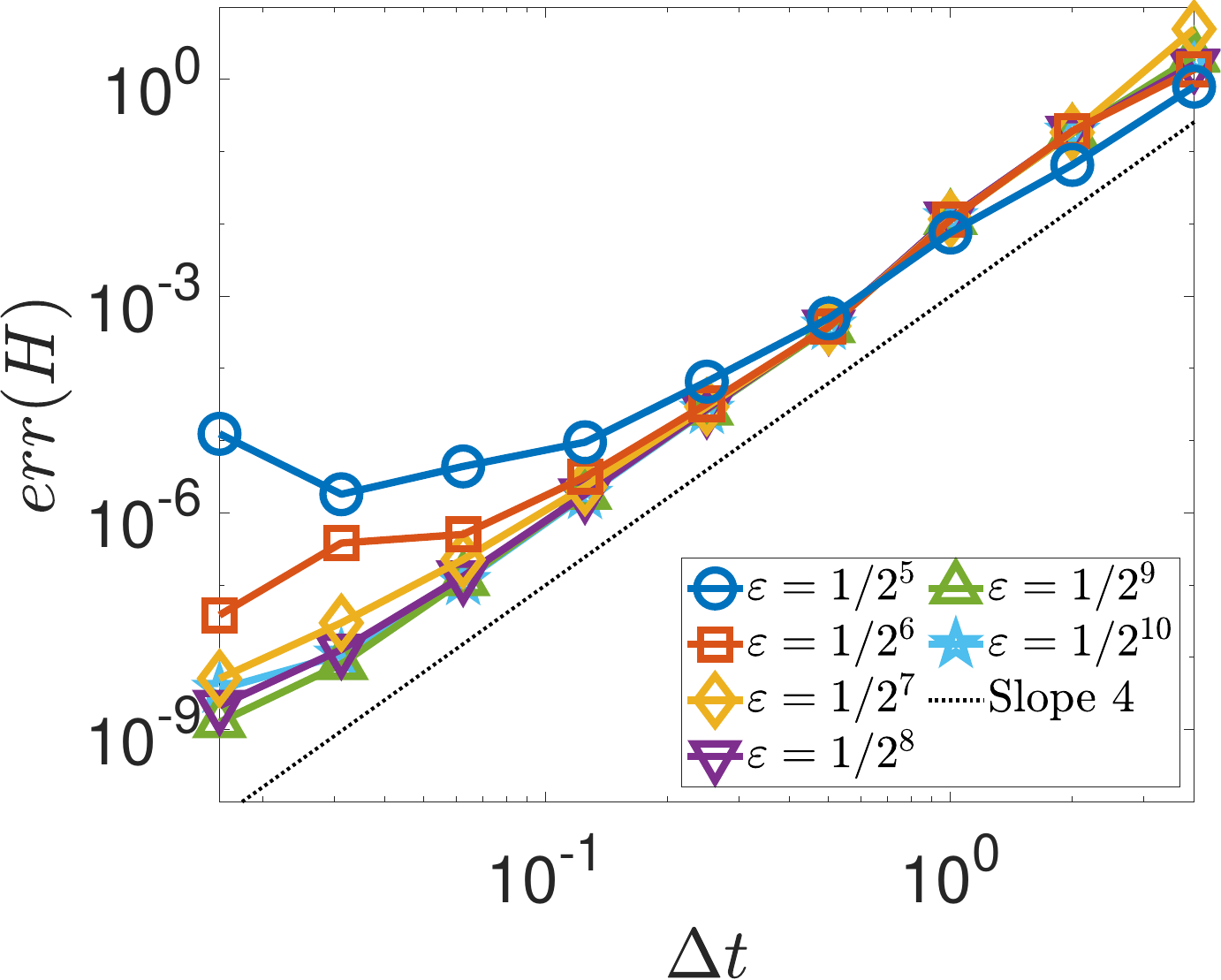}
  \end{subfigure}
  \caption{The energy error versus $\Delta t$. From left to right: $k=1,2,3$, respectively.}
  \label{fig3-1}
\end{figure}

We next fix several values of $\Delta t$ and vary $\varepsilon$ to assess the uniformity of the error. As shown in Figure \ref{fig3-2}, the error curves first exhibit a decaying trend as $\varepsilon$ decreases and then level off. This initial decay is again due to the step size lying within the interval $[\tilde{c}_1\varepsilon,\tilde{c}_2]$, which causes the error to decrease as a power of $\varepsilon$. The figure further confirms that the numerical error remains uniform with respect to $\varepsilon$, even when step sizes of order $O(\varepsilon^{-1})$ are used.

\begin{figure}[htbp]
  \centering
  \begin{subfigure}[b]{0.3\textwidth}
    \centering
    \includegraphics[width=\linewidth]{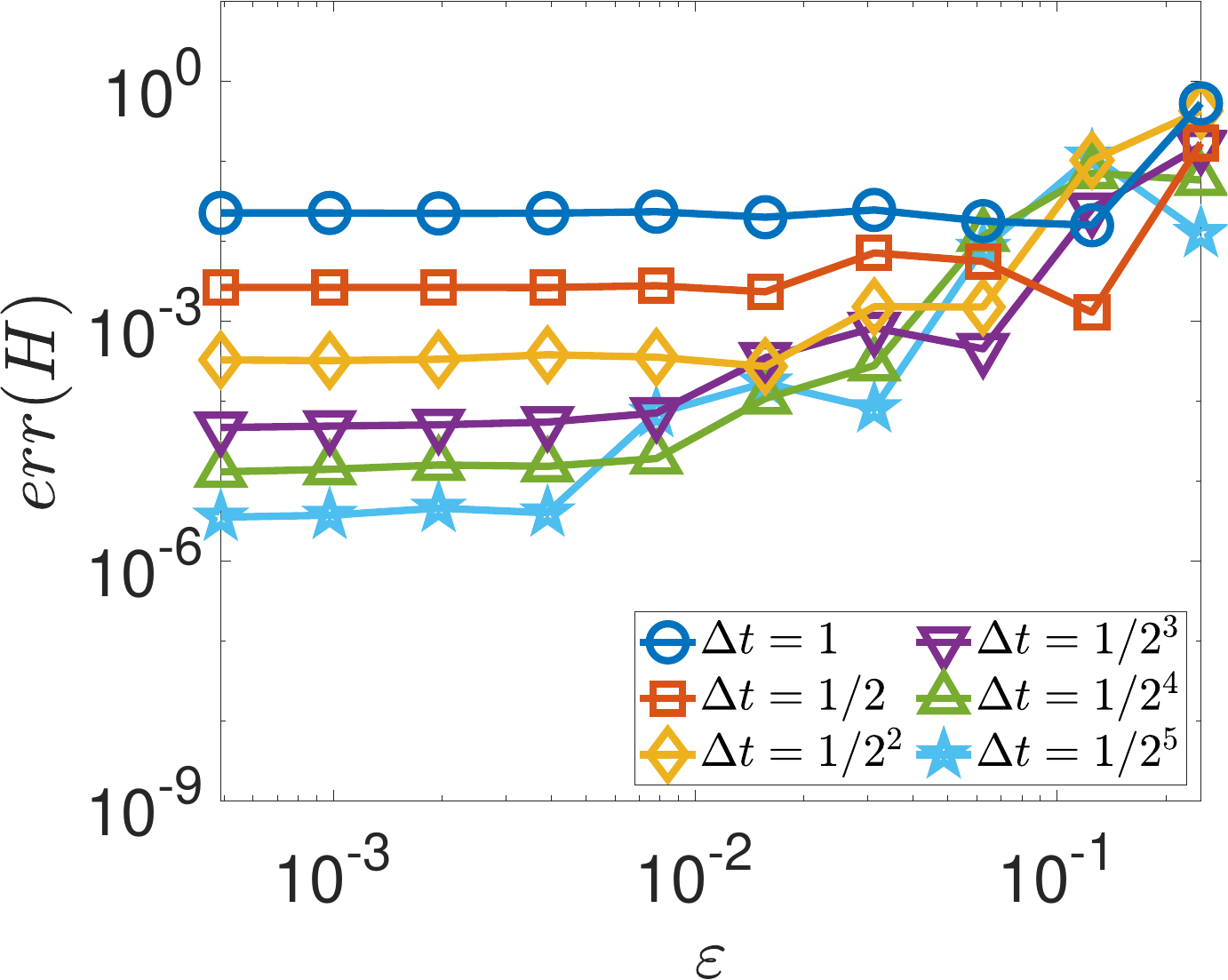}
  \end{subfigure}
  \begin{subfigure}[b]{0.3\textwidth}
    \centering
    \includegraphics[width=\linewidth]{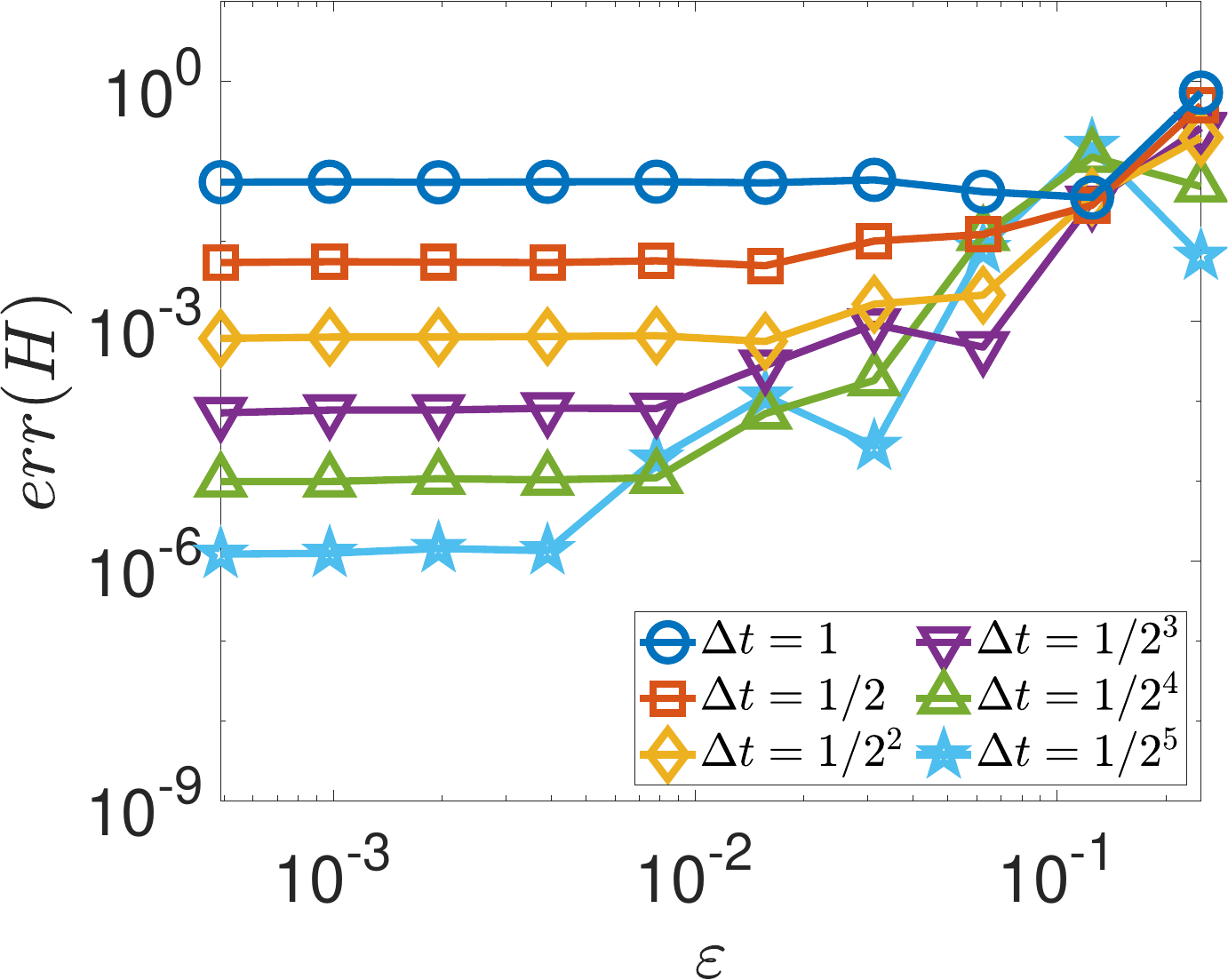}
  \end{subfigure}
  \begin{subfigure}[b]{0.3\textwidth}
    \centering
    \includegraphics[width=\linewidth]{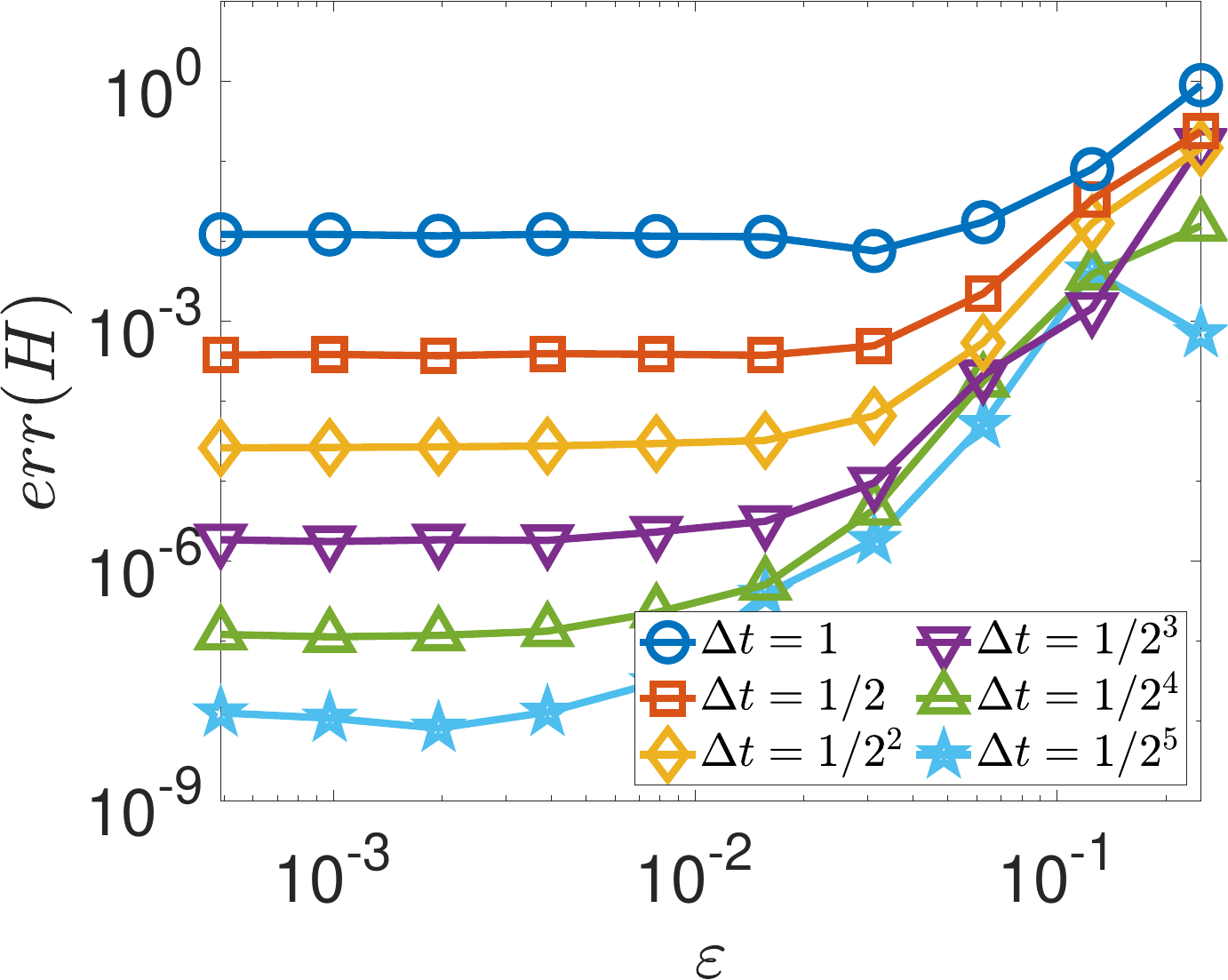}
  \end{subfigure}
  \caption{The energy error versus $\varepsilon$. From left to right: $k=1,2,3$, respectively.}
  \label{fig3-2}
\end{figure}

In practical numerical simulations, it is more common to vary the step size $h$ while keeping the final time fixed. To this end, we set $T=1024$, choose step sizes $h=T/2^j(j=0,\cdots,10)$, and conduct experiments for three different values of $\varepsilon:\varepsilon=1/2^4,1/2^6,1/2^8$. The results are shown in Figure \ref{fig3-3}. The LLEEI methods clearly maintain high-order convergence. The empirical threshold in this example is approximately $\tilde{c}_2\approx 3.96$, which successfully distinguishes the convergence behavior across different convergence regimes in the figure. Moreover, comparing the three cases, we again observe that smaller $\varepsilon$ yields more accurate numerical results, further confirming that the LLEEI scheme exhibits improved uniform convergence with respect to the magnetic flux density.

\begin{figure}[htbp]
  \centering
  \begin{subfigure}[b]{0.3\textwidth}
    \centering
    \includegraphics[width=\linewidth]{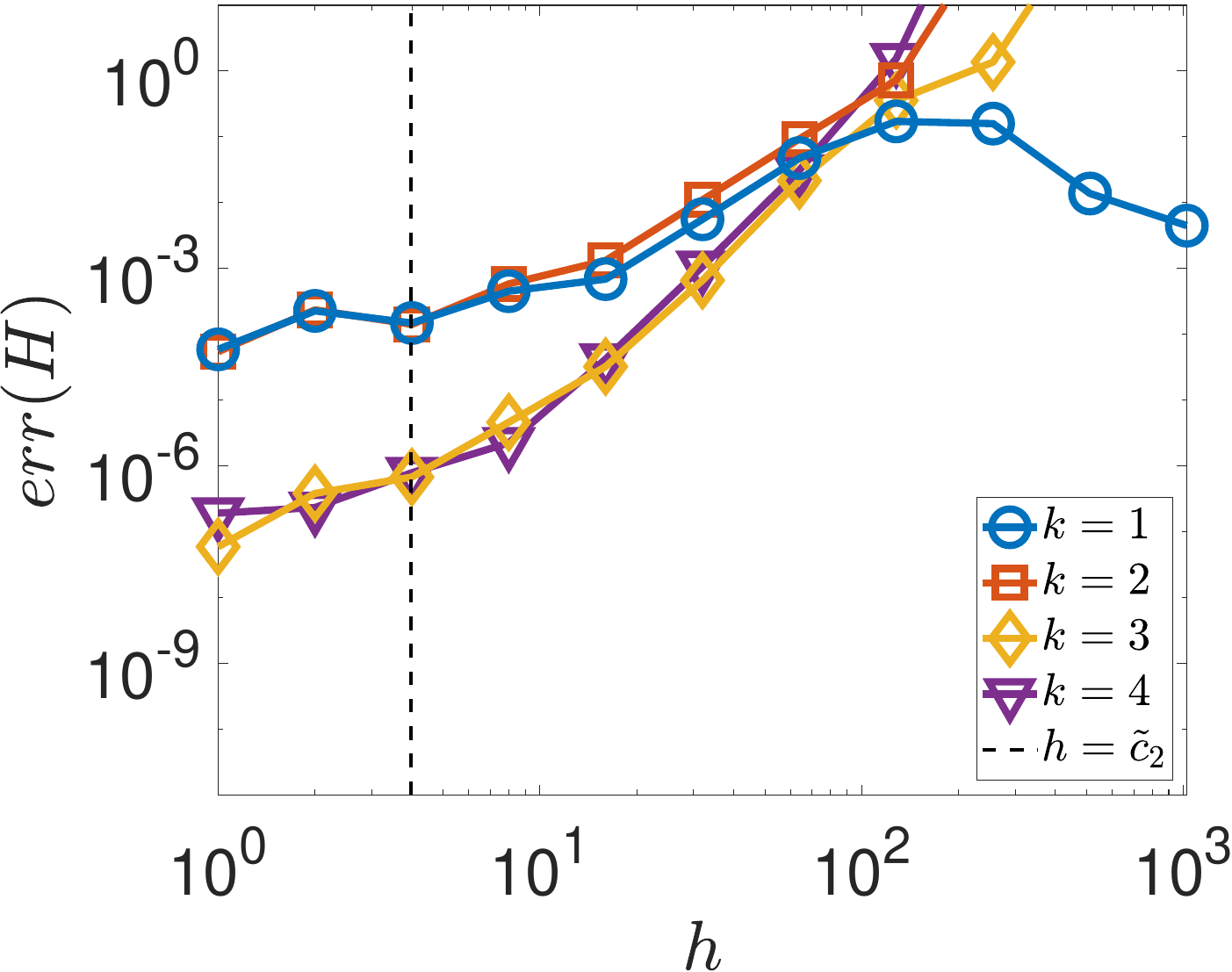}
  \end{subfigure}
  \begin{subfigure}[b]{0.3\textwidth}
    \centering
    \includegraphics[width=\linewidth]{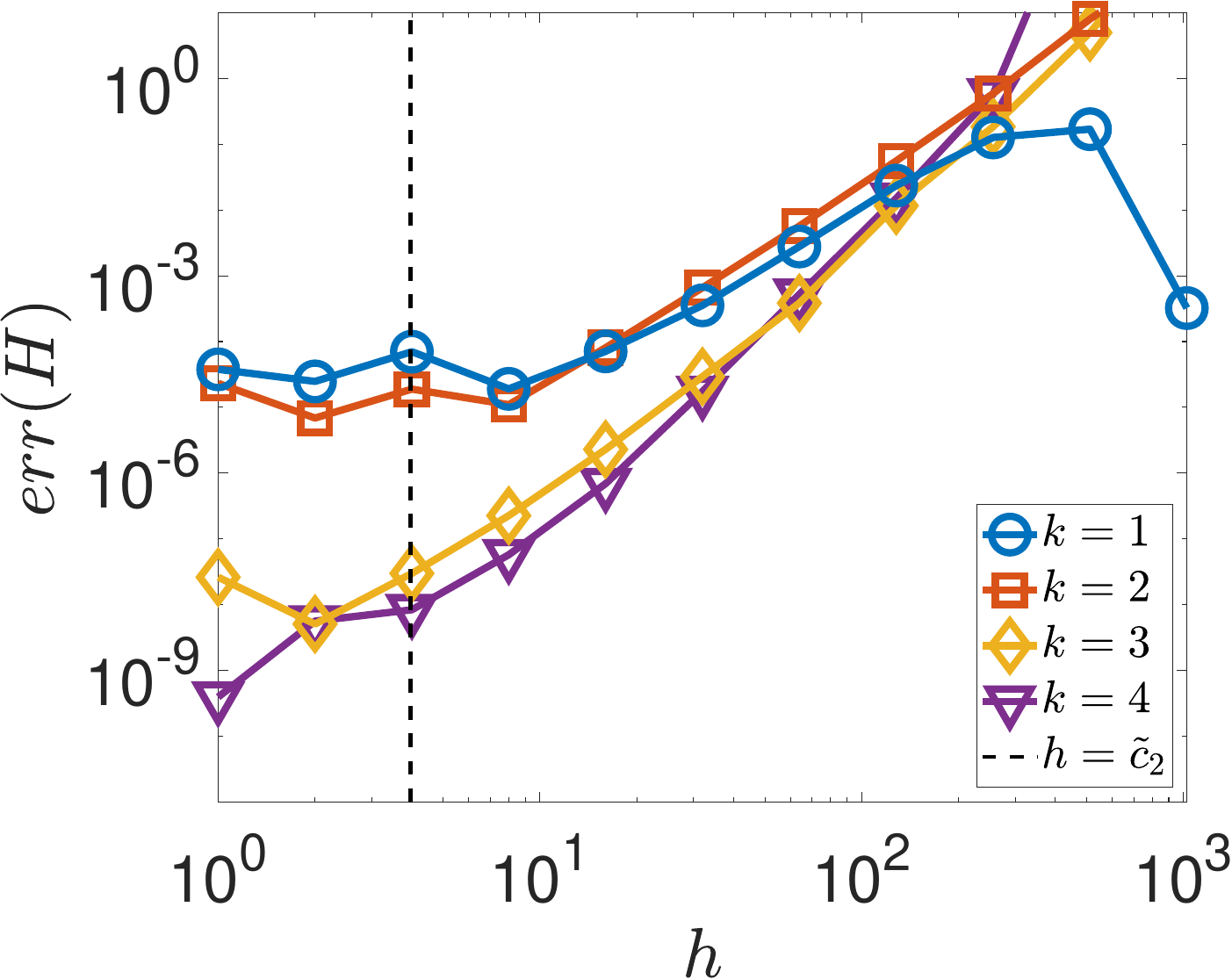}
  \end{subfigure}
  \begin{subfigure}[b]{0.3\textwidth}
    \centering
    \includegraphics[width=\linewidth]{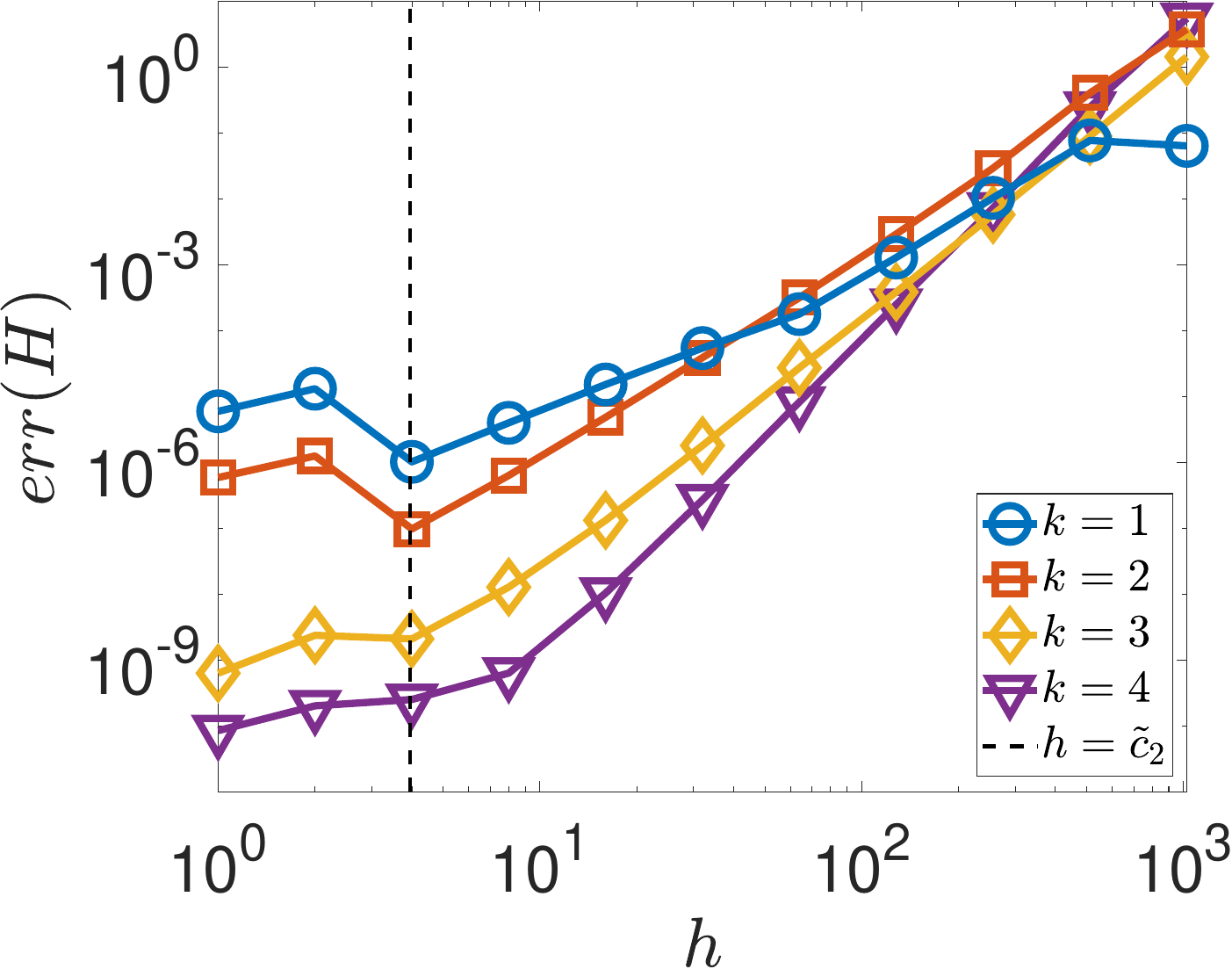}
  \end{subfigure}
  \caption{The energy error versus the step size $h$. From left to right: $\varepsilon=1/2^4,1/2^6,1/2^8$, respectively.}
  \label{fig3-3}
\end{figure}

\begin{Remark}
The uniform convergence numerically verified by $err(H)$ implicitly indicates that the velocity component also has this property, which is stronger than the convergence result for $\dot{\mathbf{y}}$ established in Theorem \ref{Thm-CPD2D-HomoB-LongTime}. Indeed, since $\dot{\mathbf{y}}$ is a highly oscillatory component in an energy-bounded system, one could further carry out an improved uniform convergence analysis using the technique in \cite{QDZ2025}. However, this procedure is cumbersome and is therefore not pursued in the present paper.
\end{Remark}

\section{Conclusion}\label{Sec5}
In this paper, we propose an exponential integrator for the CPD equation (\ref{CPD2D-homo}) based on the local linear extension technique. By means of the dimension-raising technique, we achieve a high-order linearization of this nonlinear oscillatory problem. Specifically, we introduce additional polynomial variables at a fixed point as auxiliary variables to define the local linear extension variable, and then derive their governing equations, thereby establishing the local linear extension system. Applying standard exponential integrators to this augmented system yields the new class of integrators.

The proposed methods possess three characteristic features: (\romannumeral1) theoretically, they attain arbitrarily high-order accuracy as the degree of the auxiliary polynomials increases; (\romannumeral2) they admit two independent error bounds for short-time simulations—one expressed in powers of $\Delta t$ and the other in powers of $\varepsilon$, with the latter ensuring that the numerical scheme still produces very small errors in the highly oscillatory regime even when step sizes of $O(1)$ are used; and (\romannumeral3) they allow time steps of scale $O(\varepsilon^{-1})$ while maintaining high-order uniform accuracy.

We rigorously establish the error bounds of the scheme for three step-size regimes: $O(\varepsilon)$, $O(1)$, and $O(\varepsilon^{-1})$. Two useful empirical thresholds are introduced to distinguish the convergence behavior across these regimes. Finally, numerical experiments are conducted to verify the effectiveness of the algorithm and the theoretical findings.

In future work, building on the present study, we will extend our approach to more complex CPD problems, including those involving finite Larmor radius formulations \cite{CCZ2018}, time-dependent electric fields, spatially non-uniform magnetic fields, and three-dimensional configurations.


\bibliography{ref.bib}

\end{document}